\theoremstyle{plain}
\newtheorem{theorem}{Theorem}[section]
\newtheorem{lemma}[theorem]{Lemma}
\newtheorem{proposition}[theorem]{Proposition}
\newtheorem{conjecture}{Conjecture}
\theoremstyle{definition}
\newtheorem{definition}[theorem]{Definition}
\newtheorem{rem}[theorem]{Remark} 
\numberwithin{equation}{section}
\newtheorem*{theorem*}{Theorem} 
\newcommand{\Z}{{\mathbb Z}}
\newcommand{\R}{{\mathbb R}}
\newcommand{\T}{{\mathcal T}}
\newcommand{\A}{{\mathcal A}}
\renewcommand{\T}{\mathcal{T}}
\DeclareMathOperator{\Gr}{\operatorname{Gr}}
\DeclareMathOperator{\lin}{\operatorname{span}}
\DeclareMathOperator{\dist}{\operatorname{dist}}
\DeclareFontFamily{U}{mathx}{\hyphenchar\font45}
\DeclareFontShape{U}{mathx}{m}{n}{
<5> <6> <7> <8> <9> <10>
<10.95> <12> <14.4> <17.28> <20.74> <24.88>
mathx10
}{}
\DeclareSymbolFont{mathx}{U}{mathx}{m}{n}
\DeclareMathAccent{\widecheck}{0}{mathx}{"71}
\title[Trilinear singular Brascamp-Lieb integrals]{On trilinear singular Brascamp-Lieb integrals}
\author[Lars Becker]{Lars Becker}
\address{Mathematical Institute, University of Bonn, Endenicher Allee 60, 53115, Bonn, Germany}
\email{becker@math.uni-bonn.de}
\author{Polona Durcik}
\address{Schmid College of Science and Technology, Chapman University, 
One University Drive, Orange, CA 92866, USA}
\email{durcik@chapman.edu}
\author[Fred Yu-Hsiang Lin]{Fred Yu-Hsiang Lin}
\address{Mathematical Institute, University of Bonn, Endenicher Allee 60, 53115, Bonn, Germany}
\email{fredlin@math.uni-bonn.de} 
\date{October 31, 2024}
\subjclass{42B20, 42B15}
\begin{document}

\begin{abstract}
    We classify all trilinear singular Brascamp-Lieb forms, completing the classification in the two dimensional case by Demeter and Thiele \cite{Dem+2010}. We use known results in the representation theory of finite dimensional algebras, namely the classification of indecomposable representations of the four subspace quiver. 
    Our classification lays out a roadmap for achieving bounds for all degenerate higher dimensional bilinear Hilbert transforms. 
    As another step towards this goal, we prove new bounds for a particular class of forms that arises as a natural next candidate from our classification. We further prove conditional bounds for forms associated with mutually related  representations. For this purpose we develop a method of rotations that allows us to decompose any homogeneous $d$-dimensional singular integral kernel into $(d-1)$-dimensional kernels on hyperplanes.
\end{abstract} 
 
\maketitle

\section{Introduction}
This article continues the investigation of generalizations of the bilinear Hilbert transform 
\begin{equation}
    \label{eq BHT int}
    \operatorname{BHT}(f_1, f_2)(x) = \int_{\R} f_1(x+t) f_2(x+\alpha t) \frac{1}{t} dt\,,\qquad \alpha \ne 0,1\,,
\end{equation}
where the integral is understood as the principal value.
Lacey and Thiele in their breakthrough works \cite{lt1997,lacey1999calderon} proved boundedness of  $\operatorname{BHT}$ from $L^{p_1} \times L^{p_2}$ into $L^{p_0}$, provided that $\frac{1}{p_0} = \frac{1}{p_1} + \frac{1}{p_2} = 1$ and $p_0 > \frac{2}{3}$. This partially resolved a conjecture of Calderón \cite{calderonconj}. 

It is then very natural to ask about higher dimensional versions of \eqref{eq BHT int}, namely the operators
\begin{equation}
    \label{eq BHT int d}
    \operatorname{BHT}_d(f_1, f_2)(x) = \int_{\R^d} f_1(x+ A_1 t) f_2(x+A_2 t) K(t) dt\,,
\end{equation}
where $A_1, A_2: \R^d \to \R^d$ are linear maps and $K$ is a Calderón-Zygmund kernel on $\R^d$, defined below in \eqref{czkernel}. Demeter and Thiele \cite{Dem+2010} studied the two dimensional case $d = 2$ of \eqref{eq BHT int d}. The class of such operators is richer than in the one dimensional case, in that various levels of degeneracies occur depending on $A_1$ and $A_2$.  Demeter and Thiele found four qualitatively different cases, and prove boundedness for three of them using different tools. The final case was later resolved by Kovač \cite{twisted}, using again different techniques. 

In the present paper we extend this classification to the $d$-dimensional case and in fact to more general singular Brascamp-Lieb forms, in Theorem \ref{thm Hclass}. We require some definitions, which are set up in Sections \ref{subsec SBL} to \ref{subsec class SBL}.
We use our classification to fully characterize boundedness at exponents $p_1, p_2, p_3$ that do \emph{not} satisfy the Hölder relation $\frac{1}{p_1} + \frac{1}{p_2} + \frac{1}{p_3} = 1$, in Theorem \ref{thm i)ii)}.
In Section \ref{subsec proj} we further give three conditional bounds, Theorem \ref{thm summand}, Theorem \ref{thm n} and Theorem \ref{thm rot}. They indicate how the difficulty of algebraically related cases in the classification compares. We put our classification into context and discuss which cases are covered by the existing literature in Section \ref{subsec lit}. Finally, we give new bounds for a large class of cases with Hölder exponents in Theorem \ref{thm 3 twisted}. 

\subsection{Singular Brascamp-Lieb forms}
\label{subsec SBL}
By duality, bounds for the bilinear operators \eqref{eq BHT int d} are equivalent to bounds for the trilinear forms
\[
    \int_{\R^{2d}} f_1(x+A_1t) f_2(x+A_2t) f_3(x)  K(t) dt \, dx\,.
\]
Motivated by this, we make the following general definitions.

\begin{definition}
    An $l$-Calderón-Zygmund kernel is a tempered distribution $K$ on a Hilbert space $H$, such that $K$ agrees with a function away from $0$ and such that for any choice of orthonormal basis, the corresponding partial derivatives of the Fourier transform $\widehat K$ satisfy for all $\xi \ne 0$
    \begin{equation}
        \label{czkernel}
        |\partial^{\alpha}\widehat{K}(\xi)|\leq |\xi|^{-|\alpha|}\,, \quad |\alpha| \le l\,.
    \end{equation}
    Here the Fourier transform of a Schwartz function is defined by
    \[
        \widehat f(\xi) = \int e^{-2\pi i \xi \cdot x} f(x) \, dx\,,
    \]
    and this definition is extended to tempered distributions by density.
\end{definition}
\begin{definition}
    We define a (trilinear) \emph{singular Brascamp-Lieb datum} to be a tuple $\mathbf{H} = (H; H_0, H_1, H_2, H_3; \Pi_0, \Pi_1, \Pi_2, \Pi_3)$ of five finite dimensional Hilbert spaces $H,H_i$ and of four surjective linear maps $\Pi_i : H \to H_i$. 
\end{definition}
\begin{definition}
    Given a singular Brascamp-Lieb datum $\mathbf{H}$  and a Calderón-Zygumnd kernel $K$ on $H_0$, the associated \emph{singular Brascamp-Lieb form} $\Lambda_{\mathbf{H}}$ is the trilinear form defined a priori on Schwartz functions $f_i \in \mathcal{S}(H_i)$ by
    \begin{equation}
    \label{eq Brascamp Lieb}
    \Lambda_{\mathbf{H}}(K, f_1,f_2,f_3)=\int_{H} f_1(\Pi_{1}x)f_2(\Pi_{2}x)f_3(\Pi_{3}x)K(\Pi_{0}x) \, dx\,.
\end{equation}
\end{definition}

Our goal is to study Lebesgue space estimates 
\begin{equation}
    \label{eq holder}
    |\Lambda(K, f_1, f_2, f_3)| \leq C(l) \|f_1\|_{p_1}\|f_2\|_{p_2}\|f_3\|_{p_3}
\end{equation}
for singular Brascamp-Lieb forms and exponents $\mathbf{p} = (p_1, p_2, p_3)$. This motivates the following definition.

\begin{definition}
    We say that a form $\Lambda_{\mathbf{H}}$ and the datum $\mathbf{H}$ are \emph{$\mathbf{p}$-bounded} if there exists $l$ such that \eqref{eq holder} holds for all $f_1, f_2, f_3$ and all $l$-Calderón-Zygmund kernels $K$.
    We say that it is is of \emph{Hölder type} if it is $\mathbf{p}$-bounded for some $1 <  p_1, p_2, p_3 < \infty$ with $\frac{1}{p_1} + \frac{1}{p_2} + \frac{1}{p_3} = 1$. We will abbreviate $a < p_1, p_2, p_3 < b$ by $a < \mathbf{p} < b$.
\end{definition}

The methods used in previous literature to prove or disprove bounds \eqref{eq holder} vary substantially depending on  $\mathbf{H}$. This shows in the very different methods used in \cite{Dem+2010} and \cite{twisted}, and also in the analysis in \cite{Dem+2010} for different $\mathbf{H}$.
The following notion of equivalence is relevant for deciding boundedness of a singular Brascamp-Lieb form, as expressed in Lemma \ref{equivalent bounds}. 

\begin{definition}
    \label{def equiv}
    We call two singular Brascamp-Lieb data $\mathbf{H}$, $\mathbf{H}'$ equivalent if there exist invertible linear maps 
    \begin{equation}
        \label{eq morph 1}
        \varphi: H \to H'\,,\quad 
        \varphi_i: H_i \to H_i', \quad i = 0,1,2,3\,,
    \end{equation}
    such that 
    \begin{equation}
        \label{eq morph 2}
        \Pi_i' \circ \varphi = \varphi_{i} \circ \Pi_{i}\,, \quad i = 0,1,2,3\,.
    \end{equation}
\end{definition}

\begin{lemma}
    \label{equivalent bounds}
    Suppose that $\mathbf{H}$ and ${\mathbf{H}'}$ are equivalent singular Brascamp-Lieb data. Then for all $\mathbf{p}$, the form $\Lambda_\mathbf{H}$ is $\mathbf{p}$-bounded if and only  $\Lambda_{\mathbf{H}'}$ is $\mathbf{p}$-bounded.
\end{lemma}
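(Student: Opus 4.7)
The plan is to transfer the boundedness by pulling back the integral defining $\Lambda_{\mathbf{H}'}$ via the linear isomorphisms supplied by Definition \ref{def equiv}, and verifying that the Schwartz class, the $L^p$ norms, and the class of $l$-Calderón-Zygmund kernels are all preserved (up to multiplicative constants that depend only on the isomorphisms and on $l$). By symmetry of Definition \ref{def equiv} (the inverses $\varphi^{-1},\varphi_i^{-1}$ witness that $\mathbf{H}'$ is equivalent to $\mathbf{H}$), it is enough to prove one implication.

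Assume $\Lambda_{\mathbf{H}}$ is $\mathbf{p}$-bounded, and fix an $l$-Calderón-Zygmund kernel $K'$ on $H_0'$ and Schwartz functions $f_i'\in\mathcal S(H_i')$. First I would perform the change of variables $x=\varphi^{-1}(x')$ in \eqref{eq Brascamp Lieb}. Using the intertwining relation $\Pi_i'\circ\varphi=\varphi_i\circ\Pi_i$ from \eqref{eq morph 2}, the integrand becomes
\[
f_1'(\varphi_1\Pi_1 x)\,f_2'(\varphi_2\Pi_2 x)\,f_3'(\varphi_3\Pi_3 x)\,K'(\varphi_0\Pi_0 x),
\]
multiplied by the Jacobian factor $|\det\varphi|$. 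I then set $f_i(y):=f_i'(\varphi_i y)$ for $i=1,2,3$ and $K(z):=K'(\varphi_0 z)$, so that
\[
\Lambda_{\mathbf{H}'}(K',f_1',f_2',f_3')=|\det\varphi|\,\Lambda_{\mathbf{H}}(K,f_1,f_2,f_3).
\]
Since each $\varphi_i$ is an invertible linear map, $f_i\in\mathcal S(H_i)$ and a standard change of variables yields $\|f_i\|_{p_i}=|\det\varphi_i|^{1/p_i}\,\|f_i'\|_{p_i}$.

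The main point to verify is that $K$ is, up to a constant, an $l$-Calderón-Zygmund kernel on $H_0$. By the behavior of the Fourier transform under linear substitutions one has $\widehat{K}(\xi)=|\det\varphi_0|^{-1}\widehat{K'}(\varphi_0^{-\ast}\xi)$, where $\varphi_0^{-\ast}$ denotes the inverse adjoint. Differentiating and using the operator-norm bounds $|\varphi_0^{-\ast}\xi|\ge \|\varphi_0^{\ast}\|^{-1}|\xi|$ and the hypothesis $|\partial^{\alpha}\widehat{K'}|\le |\cdot|^{-|\alpha|}$ for $|\alpha|\le l$, one obtains
\[
|\partial^{\alpha}\widehat{K}(\xi)|\le C(\varphi_0,l)\,|\xi|^{-|\alpha|},\quad |\alpha|\le l,
\]
for some constant $C(\varphi_0,l)$ depending only on $\varphi_0$ and $l$. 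Hence $C(\varphi_0,l)^{-1}K$ is a genuine $l$-Calderón-Zygmund kernel, and the assumed $\mathbf{p}$-boundedness of $\Lambda_{\mathbf{H}}$ together with the trilinearity of the form gives $|\Lambda_{\mathbf{H}}(K,f_1,f_2,f_3)|\le C(l)\,C(\varphi_0,l)\prod_i\|f_i\|_{p_i}$. Combining with the two identities of the previous paragraph yields the desired estimate
\[
|\Lambda_{\mathbf{H}'}(K',f_1',f_2',f_3')|\le C'\,\|f_1'\|_{p_1}\|f_2'\|_{p_2}\|f_3'\|_{p_3},
\]
with $C'=|\det\varphi|\,C(l)\,C(\varphi_0,l)\prod_{i=1}^{3}|\det\varphi_i|^{1/p_i}$, proving $\mathbf{p}$-boundedness of $\Lambda_{\mathbf{H}'}$.

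The only non-routine step is the verification of the Calderón-Zygmund condition after the linear substitution in $H_0$; everything else is bookkeeping with Jacobians. Since the $l$-CZ class is closed under scaling by positive constants and linear isomorphisms up to a change in constant, and $l$ is allowed to depend on the form, no loss occurs in transferring the hypothesis. Note that the argument makes essential use of the surjectivity of each $\Pi_i$ only indirectly (it is built into the definition of equivalence via the intertwining relation), so no additional assumption is needed.
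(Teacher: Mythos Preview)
Your proof is correct and follows exactly the approach the paper indicates, namely a change of variables in the integral together with the observation that the class of $l$-Calder\'on--Zygmund kernels is preserved (up to a constant) under linear isomorphisms. There is a harmless sign slip in the Jacobian for the $L^p$ norms --- the correct relation is $\|f_i\|_{p_i}=|\det\varphi_i|^{-1/p_i}\|f_i'\|_{p_i}$ --- but this only changes the explicit value of $C'$ and does not affect the conclusion.
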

Lemma \ref{equivalent bounds} is a direct consequence of changes of variables in the functions and the integral defining the singular Brascamp-Lieb form. 

Our goal is to classify $\mathbf{p}$-bounded singular Brascamp-Lieb forms up to equivalence. Note that the notions of $\mathbf{p}$-boundedness and equivalence of data are insensitive to the Hilbert space structures on the spaces in $\mathbf{H}, \mathbf{H}'$. Hence, only the underlying vector spaces and linear maps will be relevant for our classification. However, to make sense of  \eqref{czkernel} and \eqref{eq holder}, we need Lebesgue measures on the spaces $H, H_i$, and a norm on $H_0^*$. The $H, H_i$ are defined to be Hilbert spaces  to simplify the exposition, because Hilbert spaces canonically have this additional structure. (The same choice is made in \cite{BraLieb}, for similar reasons.) 

\begin{rem}
    \label{rem quant}
    To study quantitative estimates, that is, the size of the constant $C$ in \eqref{eq holder}, one needs a finer equivalence relation than the one given by \eqref{eq morph 1}, \eqref{eq morph 2}. Namely one should assume that $\varphi_0$ is a scalar multiple of an orthogonal transformation, $\varphi_0 \in \R \cdot O(H_0, H_0')$. This is because only scalar multiples of isometries preserve all quantitative assumptions on the Calderón-Zygmund kernels. Equivalence classes modulo this finer equivalence relation are parametrized by equivalence classes according to Definition \ref{def equiv} together with an element of $Gl(H_0') /(\R \cdot O(H_0, H_0'))$. The latter can be parametrized by nonsingular lower triangular matrices with a $1$ in the upper left corner.
\end{rem}

\subsection{The four subspace problem}
The classification of Brascamp-Lieb data up to equivalence is equivalent to the so-called four subspace problem, which we now describe.

\begin{definition}
    A \emph{module} is a tuple $\mathbf{M} = (M; M_0, M_1, M_2, M_3)$ of a finite dimensional vector space $M$ and four subspaces $M_i \subseteq M$, $i=0,1,2,3$.
\end{definition}

Structures $\mathbf{M}$ are also called representations (of the four subspace quiver). We call them modules, because they are modules over the path algebra associated with that quiver. The interested reader is refered to \cite{Derksen+survey} for a short survey on quiver representations.

\begin{definition}
    Two modules $\mathbf{M}$ and $\mathbf{M'}$ are \emph{isomorphic} if there exists an invertible linear map $\psi: M \to M'$
    such that 
    \[
        \psi(M_i) = M_i'\,, \quad i = 0,1,2,3\,.
    \]
    If $\mathbf{M}$ is isomorphic to $\mathbf{M}'$, we write $\mathbf{M} \cong \mathbf{M}'$.
\end{definition}

The four subspace problem asks for a classification of all modules up to isomorphism. It was solved by Gelfand and Ponomarev \cite{Gel+1972} for algebraically closed fields. In the case of general fields (we are interested in $\R$), the solution was given by Nazarova \cite{Nazarova1, Nazarova2}. See also \cite{Med+2004} for an elementary proof. The solution consists of a list of indecomposable modules, such that each module is isomorphic to a unique (up to permutation) finite direct sum of indecomposables.

\begin{definition}
    The \emph{direct sum} of two modules $\mathbf{M} = (M; M_0, M_1, M_2, M_3)$ and $\mathbf{M}' = (M'; M_0', M_1', M_2', M_3')$ is defined to be the module
    \[
        \mathbf{M} \oplus \mathbf{M'} = (M \oplus M'; M_0 \oplus M_0', M_1 \oplus M_1', M_2 \oplus M_2', M_3 \oplus M_3')\,. 
    \]
\end{definition}

\begin{theorem}[Gelfand, Ponomarev \cite{Gel+1972}; Nazarova \cite{Nazarova1, Nazarova2}]
    \label{thm mod class}
    Let $\mathbf{M}$ be a module. Then there exists a finite sequence of modules $\mathbf{M}_1, \mathbf{M}_2, \dotsc, \mathbf{M}_k$, from the list in Table~\ref{table 4sub} (possibly after permuting the subspaces), such that 
    \[
        \mathbf{M} \cong \mathbf{M}_1 \oplus \dotsb \oplus \mathbf{M}_k\,.
    \]
    For every such representation 
    \[
        \mathbf{M} \cong \mathbf{M}_1'\oplus \dotsb \oplus \mathbf{M}_k'\,,
    \]
    there exists a permutation $\pi: \{1, \dotsc, k\} \to \{1, \dotsc, k\}$ such that $\mathbf{M}_i = \mathbf{M}'_{\pi(i)}$,  $i = 1, \dotsc, k$.
\end{theorem}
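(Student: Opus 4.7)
I would split the theorem into existence and uniqueness of the decomposition. The uniqueness part is an application of the Krull-Schmidt theorem: one checks that the endomorphism ring $\mathrm{End}(\mathbf{M})$, defined as the finite-dimensional $\mathbb{R}$-subalgebra of $\mathrm{End}(M)$ consisting of linear maps preserving every subspace $M_i$, is local whenever $\mathbf{M}$ is indecomposable, so that the standard Krull-Schmidt argument applies. Existence of \emph{some} decomposition into indecomposable modules is then an immediate induction on the total dimension $\dim M + \sum_{i=0}^3 \dim M_i$, since any decomposable module splits off a summand of strictly smaller total dimension. The content of the theorem therefore reduces to enumerating the indecomposable modules up to isomorphism and matching them with the entries of Table \ref{table 4sub}.

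For the enumeration I would use the representation theory of the underlying four-subspace quiver. Its underlying graph is the extended Dynkin diagram $\widetilde{D}_4$, a central vertex joined to four leaves, and the associated Tits quadratic form on dimension vectors $(d; d_0, d_1, d_2, d_3)$ is positive semidefinite with one-dimensional radical spanned by the null root $\delta = (2; 1, 1, 1, 1)$. I would apply the BGP reflection functors at the four leaves, iterated to form a Coxeter functor, to reduce each indecomposable either to a simple module along a preprojective or preinjective orbit, or to a regular module with dimension vector a positive multiple of $\delta$. Tracking the finitely many positive real roots of $\widetilde D_4$ under the Coxeter action yields the finite list of ``discrete'' indecomposables.

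The main obstacle is the classification of regular indecomposables over the non-algebraically closed field $\mathbb{R}$; this is precisely the content that Nazarova added to the earlier algebraically closed case of Gelfand and Ponomarev. After the reflection-functor reduction, the regular representations of dimension $n\delta$ form a tube category controlled by a Kronecker-type problem over $\mathbb{R}$, and classifying its indecomposables amounts to a real normal form for a suitable pair of matrices. This splits into Jordan blocks for eigenvalues in $\mathbb{R}$, giving a one-parameter family indexed by $\mathbb{R} \cup \{\infty\}$, together with real $2 \times 2$ block-Jordan pieces for each complex-conjugate pair of eigenvalues in $\mathbb{C} \setminus \mathbb{R}$, contributing an extra family of indecomposables of double dimension. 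Translating these normal forms back to four-tuples of subspaces recovers exactly the rows in Table \ref{table 4sub} that have no counterpart over $\mathbb{C}$ and which are responsible for the field-dependence of the classification; verifying that this list is both complete and irredundant is the step that would require the most care.
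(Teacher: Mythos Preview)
The paper does not give a proof of this theorem at all; it is quoted as a classical result, with references to Gelfand--Ponomarev \cite{Gel+1972} for algebraically closed fields, to Nazarova \cite{Nazarova1,Nazarova2} for general fields, and to \cite{Med+2004} for an elementary exposition. There is therefore no ``paper's own proof'' to compare against.

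Your outline is a reasonable high-level sketch of the standard modern argument via tame quiver theory: Krull--Schmidt gives uniqueness, the underlying graph is the affine diagram $\widetilde D_4$ with null root $\delta=(2;1,1,1,1)$, and the Coxeter functor built from BGP reflections separates indecomposables into preprojective, preinjective and regular ones. One small correction: the passage from $\mathbb{C}$ to $\mathbb{R}$ does not create new \emph{rows} in Table~\ref{table 4sub}; rather, in row $\mathbf{0}$ the matrix $X$ is allowed to be the companion matrix of $P^s$ for an irreducible \emph{quadratic} $P$ in addition to the linear case, which is exactly the real $2\times 2$ block phenomenon you describe. Also, your sketch glosses over the three exceptional tubes (the degenerations at $\lambda=0,1,\infty$), which in the paper's tables appear as row $\mathbf{I}$ and its permutations rather than inside row $\mathbf{0}$; matching the preprojective/preinjective orbits to rows $\mathbf{I\!I}$ through $\mathbf{V}^*$ and handling these exceptional tubes is the bookkeeping that a full proof would have to supply. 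None of this is a genuine gap in your strategy, but it is where the work lies.
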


\begin{rem}
    The indecomposable modules $\mathbf{M}_j$ are listed in Table \ref{table 4sub} only up to permutation of the subspaces. We give the additional information which permutations give rise to isomorphic modules for each case in Lemma \ref{lem perm} in Appendix \ref{sec classification}. This information will be used in the proof of Theorem \ref{thm Hclass}. 
\end{rem}

We alert the reader that the same classification problem for more than four subspace, relevant to more-than-three linear forms, is significantly harder.
More precisely, the four subspace problem is the last in this sequence which is \emph{tame}. For tame classification problems, there exist for every dimension tuple of the involved subspaces only finitely many one parameter families of indecomposable modules, plus possibly finitely many additional indecomposable modules (at least if the underlying field is algebraically closed), see \cite{Crawley-Boevey}. If a classification problem is not tame, then it is \emph{wild}. One can show that every wild classification problem is at least as hard as the classification of finite dimensional modules up to isomorphism over \emph{any} finitely generated algebra. For both of these facts, and further references, see \cite{Drozd_tamewild}. In that sense wild classification problems are substantially more difficult.

Of course, we are not interested in all modules, but just in those corresponding to $\mathbf{p}$-bounded forms with $\mathbf{p} < \infty$. This imposes some restrictions, see Lemma \ref{lem necessary} and Lemma \ref{lem nec hol}. However, even with these additional restrictions, even if we additionally assume the Hölder condition $1/p_1 + 1/p_2 + 1/p_3 = 1$,  the classification problem remains wild for four- and higher linear forms, see Remark \ref{rem impossible}.

\subsection{Classification of singular Brascamp-Lieb forms}
\label{subsec class SBL}

Taking adjoint gives rise to a natural correspondence between the underlying vector spaces of singular Brascamp-Lieb data and modules. 

\begin{definition}
Let $\mathbf{H}$ be a singular Brascamp-Lieb datum.
The associated module $\mathbf{M}_{\mathbf{H}}$ is defined to be
\[
    \mathbf{M}_{\mathbf{H}} = (H^*; \Pi_0^* H_0^*, \Pi_1^* H_1^*, \Pi_2^* H_2^*, \Pi_3^* H_3^*)\,.
\]
Here $^*$ denotes adjoints and dual spaces. Conversely, if $\mathbf{M}$ is a module, then we associate to it a singular Brascamp-Lieb datum 
\[
    \mathbf{H}_{\mathbf{M}} = (M^*; M_0^*, M_1^*, M_2^*, M_3^*; \iota_0^*, \iota_1^*, \iota_2^*, \iota_3^*)\,.
\]
Here $\iota_j$ denotes the inclusion map $\iota_j : M_j \to M$, and we equip the finite dimensional vector spaces $M^*, M_j^*$ with any Hilbert space structure.
\end{definition}

If we define morphisms of Brascamp-Lieb data to be tuples of (not necessarily invertible) linear maps $\varphi, \varphi_i$ satisfying \eqref{eq morph 1} and \eqref{eq morph 2}, then the maps $\mathbf{H} \mapsto \mathbf{M}_\mathbf{H}$ and $\mathbf{M} \mapsto \mathbf{H}_\mathbf{M}$ become mutually inverse dualities of categories. As a consequence of this fact and Theorem \ref{thm mod class} we immediately obtain a classification of all singular Brascamp-Lieb data. 

\begin{theorem}
    \label{thm equivalence}
    Singular Brascamp-Lieb data $\mathbf{H}$ and $\mathbf{H'}$ are equivalent if and only if $\mathbf{M}_\mathbf{H}$ and $\mathbf{M}_{\mathbf{H'}}$ are isomorphic. For each module $\mathbf{M}$, there exists a singular Brascamp-Lieb datum $\mathbf{H}$ with $\mathbf{M}_\mathbf{H} \cong \mathbf{M}$. As a consequence, for every singular Brascamp-Lieb datum $\mathbf{H}$, there exists a finite list of modules $\mathbf{M}_1, \dotsc, \mathbf{M}_k$ from Table \ref{table 4sub} such that 
    \begin{equation}
        \label{dsum dec}
         \mathbf{M}_\mathbf{H} \cong \mathbf{M}_1 \oplus \dotsb \oplus \mathbf{M}_k\,.
    \end{equation}
    This list is unique, up to permutation. Conversely, for every finite list $\mathbf{M}_1,\dotsc, \mathbf{M}_k$ there exists a unique up to equivalence singular Brascamp-Lieb datum $\mathbf{H}$ such that \eqref{dsum dec} holds.
\end{theorem}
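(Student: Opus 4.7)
The plan is to make rigorous the authors' remark that $\mathbf{H}\mapsto \mathbf{M}_\mathbf{H}$ and $\mathbf{M}\mapsto \mathbf{H}_\mathbf{M}$ are mutually inverse dualities, and then invoke Theorem \ref{thm mod class}. First I would verify well-definedness of both assignments: a surjection $\Pi_i:H\to H_i$ has injective adjoint $\Pi_i^*:H_i^*\to H^*$, so $\Pi_i^*H_i^*$ is a genuine subspace of $H^*$ isomorphic to $H_i^*$; dually, each inclusion $\iota_j:M_j\hookrightarrow M$ has surjective adjoint $\iota_j^*:M^*\to M_j^*$, so $\mathbf{H}_\mathbf{M}$ is a valid Brascamp--Lieb datum.

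For the ``if and only if'' claim, given an equivalence $(\varphi,\varphi_0,\dotsc,\varphi_3)$ between $\mathbf{H}$ and $\mathbf{H}'$, I would dualize the relation $\Pi_i'\circ\varphi=\varphi_i\circ\Pi_i$ to $\varphi^*\circ(\Pi_i')^*=\Pi_i^*\circ\varphi_i^*$. Since $\varphi_i^*$ is an isomorphism, this forces $\varphi^*:H'^*\to H^*$ to send $(\Pi_i')^*H_i'^*$ bijectively onto $\Pi_i^*H_i^*$, so its inverse provides an isomorphism $\mathbf{M}_\mathbf{H}\cong\mathbf{M}_{\mathbf{H}'}$. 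For the converse, given an isomorphism $\psi:H^*\to H'^*$ with $\psi(\Pi_i^*H_i^*)=(\Pi_i')^*H_i'^*$, the injectivity of $\Pi_i^*$ and $(\Pi_i')^*$ lets me transport $\psi|_{\Pi_i^*H_i^*}$ to a unique isomorphism $\psi_i:H_i^*\to H_i'^*$ satisfying $(\Pi_i')^*\circ\psi_i=\psi\circ\Pi_i^*$. Setting $\varphi:=(\psi^{-1})^*$ and $\varphi_i:=(\psi_i^{-1})^*$, and identifying double duals with the original spaces, the relation \eqref{eq morph 2} follows by taking adjoints once more.

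Essential surjectivity is witnessed by $\mathbf{H}_\mathbf{M}$: the canonical identification $V^{**}\cong V$ in finite dimensions turns $\iota_j^{**}M_j^{**}$ into $M_j\subseteq M$, so $\mathbf{M}_{\mathbf{H}_\mathbf{M}}\cong\mathbf{M}$. The decomposition statement then follows by combining this bijection with Theorem \ref{thm mod class}: starting from $\mathbf{H}$, decompose $\mathbf{M}_\mathbf{H}$ into summands from Table \ref{table 4sub}; starting from a list $\mathbf{M}_1,\dotsc,\mathbf{M}_k$, apply essential surjectivity to $\mathbf{M}_1\oplus\dotsb\oplus\mathbf{M}_k$ to produce $\mathbf{H}$, with uniqueness up to equivalence supplied by the first claim. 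The main obstacle is purely notational: one must track adjoints, subspaces, and the direction of arrows carefully, since surjections dualize to injections and subspace inclusions dualize to quotient maps. No individual step is deep; the decisive conceptual input is recognizing the contravariant equivalence of categories, after which the unique decomposition for modules transfers automatically to data.
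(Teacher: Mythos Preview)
Your proposal is correct and follows exactly the approach of the paper, which dispatches the first claim as ``basic linear algebra'' and then invokes Theorem \ref{thm mod class} together with the natural isomorphisms $\mathbf{M}_{\mathbf{H}_\mathbf{M}}\cong\mathbf{M}$ and $\mathbf{H}_{\mathbf{M}_\mathbf{H}}\cong\mathbf{H}$. You have simply spelled out in detail the duality that the paper leaves implicit; nothing is missing and nothing is different.
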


Most of the singular Brascamp-Lieb forms as in \eqref{dsum dec} are not $\mathbf{p}$-bounded for any $\mathbf{p} < \infty$. We exclude the case where some $p_i = \infty$ to avoid certain cases where the maps $\Pi_i$ are not surjective on the kernel of $\Pi_0$, which would complicate our analysis while offering little additional insight. We have the following classification of $\mathbf{p}$-bounded forms with $\mathbf{p} < \infty$, which will be proved in Section \ref{sec proof class}.

\begin{theorem}
    \label{thm Hclass}
    Let $1 \le \mathbf{p} < \infty$ and let $\mathbf{H}$ be a $\mathbf{p}$-bounded singular Brascamp-Lieb datum with $H_1, H_2, H_3 \ne\{0\}$. Then one of the following holds, with the notation from Appendix~\ref{sec classification}.
    \begin{enumerate}
        \item[i)] (Bilinear Hölder-type) There exists an assignment $\{i, j, k\} = \{1,2,3\}$ such that $\frac{1}{p_j} = \frac{1}{p_k} = 1 - \frac{1}{p_i}$ and $n_1, n_2, n_3, n_4 \ge 0$ such that
        \begin{equation}
            \label{eq bil hol}
            \mathbf{M}_\mathbf{H} \cong (\mathbf{P}^{(j)})^{\oplus n_1} \oplus (\mathbf{K}^{(j)})^{\oplus n_2} \oplus (\mathbf{P}^{(k)})^{\oplus n_3} \oplus (\mathbf{K}^{(k)})^{\oplus n_4}\,.
        \end{equation}
        \item[ii)] (Young-type) We have $\mathbf{p} = (p_1,p_2,p_3)$ with $\frac{1}{p_1} + \frac{1}{p_2} + \frac{1}{p_3} = 2$. If $p_1, p_2, p_3 \ne 1$ then there exist $n_1, n_2 \ge 0$ such that
        \begin{equation}
            \label{eq young1}
            \mathbf{M}_\mathbf{H} \cong \mathbf{Y}^{\oplus n_1} \oplus \mathbf{Z}^{\oplus n_2}\,.
        \end{equation}
        If there is some $i \in \{1,2,3\}$ with $p_i = 1$, then there exist $n_1, n_2, n_3, n_4 \ge 0$ such that
        \begin{equation}
            \label{eq young2}
            \mathbf{M}_\mathbf{H} \cong \mathbf{Y}^{\oplus n_1} \oplus \mathbf{Z}^{\oplus n_2} \oplus (\mathbf{P}^{(i)})^{\oplus n_3} \oplus (\mathbf{K}^{(i)})^{\oplus n_4}\,.
        \end{equation}
        \item[iii)] (Loomis-Whitney-type) We have $\mathbf{p} = (2,2,2)$ and there exist $n_1 , n_2 \ge 0$ and a list of modules $\mathbf{M}_1, \dotsc, \mathbf{M}_k$ from Table \ref{table Holder} with 
        \begin{equation}
            \label{eq lw}
            \mathbf{M}_\mathbf{H} \cong \mathbf{L}^{\oplus n_1} \oplus \mathbf{B}^{\oplus n_2} \oplus  \mathbf{M}_1 \oplus \dotsb \oplus \mathbf{M}_k\,.
        \end{equation}
        \item[iv)] (Hölder-type) We have $\mathbf{p} = (p_1, p_2, p_3)$ with $\frac{1}{p_1} + \frac{1}{p_2} + \frac{1}{p_3} = 1$. In this case, there exists a finite list of modules $\mathbf{M}_1, \dotsc, \mathbf{M}_k$ from Table \ref{table htype} such that 
        \[
             \mathbf{M}_\mathbf{H} \cong \mathbf{M}_1 \oplus \dotsb \oplus \mathbf{M}_k\,.
        \]
    \end{enumerate}
\end{theorem}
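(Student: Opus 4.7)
By Theorem \ref{thm equivalence}, $\mathbf{M}_\mathbf{H}$ decomposes uniquely as a sum of indecomposables from Table \ref{table 4sub}, and equivalent data have identical Lebesgue boundedness properties by Lemma \ref{equivalent bounds}. The first key step is a \emph{direct-sum reduction}: if $\mathbf{M}_\mathbf{H} \cong \mathbf{M}_1 \oplus \dotsb \oplus \mathbf{M}_k$ and $\Lambda_\mathbf{H}$ is $\mathbf{p}$-bounded, then the singular Brascamp-Lieb form associated with each summand $\mathbf{H}_{\mathbf{M}_j}$ is itself $\mathbf{p}$-bounded. I would prove this by testing $\Lambda_\mathbf{H}$ against tensor products of functions that isolate the $j$-th component while being constant-on-scales on the complementary summand, after verifying that the pushforward of a Calderón-Zygmund kernel on $H_0$ to the relevant subkernel is still Calderón-Zygmund. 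It then suffices to determine, for each indecomposable $\mathbf{M}$ from Table \ref{table 4sub}, the set of $1 \le \mathbf{p} < \infty$ for which $\mathbf{H}_\mathbf{M}$ is $\mathbf{p}$-bounded, and to check compatibility of these exponents across summands.

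For each indecomposable I would invoke Lemmas \ref{lem necessary} and \ref{lem nec hol}. The scaling/homogeneity argument in Lemma \ref{lem necessary} forces a linear relation between the $1/p_i$ and the dimensions of the spaces in $\mathbf{H}_\mathbf{M}$, while Lemma \ref{lem nec hol} kills further possibilities whenever the Hölder relation is in force. Walking through Table \ref{table 4sub}, one finds that only a handful of indecomposables admit any $\mathbf{p}$-bound at all, and that their admissible $\mathbf{p}$'s fall into exactly the four regimes of the theorem: the bilinear relation $1/p_j + 1/p_i = 1$ is forced by $\mathbf{P}^{(j)}$, $\mathbf{K}^{(j)}$, which have a trivial third component; the Young exponent hyperplane $\sum 1/p_i = 2$ is forced by $\mathbf{Y}, \mathbf{Z}$ (and by the simples $\mathbf{P}^{(i)}, \mathbf{K}^{(i)}$ once some $p_i = 1$); the Loomis-Whitney point $(2,2,2)$ arises from $\mathbf{L}, \mathbf{B}$; and $\sum 1/p_i = 1$ is where the remaining indecomposables, exactly those in Table \ref{table htype}, can live.

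The combinatorial task is then to collect, for each fixed $\mathbf{p}$ in each of the four regimes, precisely those indecomposables whose admissible exponent set contains $\mathbf{p}$, and to sort them into the lists of cases (i)-(iv). The permutation ambiguity in the presentation of Table \ref{table 4sub} is resolved via Lemma \ref{lem perm}, which in particular pins down the choice of the distinguished index $i$ in cases (i) and (ii). I expect the main obstacle to be twofold: first, making the direct-sum reduction rigorous at the boundary exponents $p_i = 1$, where localization arguments degenerate and the extra summands in \eqref{eq young2} are genuinely allowed rather than ruled out by scaling; and second, the combinatorial bookkeeping of eliminating every indecomposable in Table \ref{table 4sub} that does not appear in any of (i)-(iv), typically by exhibiting a concrete counterexample or by showing that the scaling identity and the appropriate Hölder-type relation are jointly incompatible with the dimension vector of that indecomposable.
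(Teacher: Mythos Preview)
Your plan matches the paper's proof in architecture: decompose $\mathbf{M}_\mathbf{H}$ into indecomposables, reduce to each summand, and then run the necessary conditions of Lemma~\ref{lem necessary} case by case through Table~\ref{table 4sub}. Two remarks on points where your description diverges from what the paper actually does.

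First, your ``direct-sum reduction'' is precisely Theorem~\ref{thm summand}, which the paper proves separately and then simply cites here. Your sketch of it is slightly miscast: there is no \emph{pushforward} of the Calder\'on--Zygmund kernel from the big $H_0$ to a sub-kernel. The argument runs in the opposite direction. Given a kernel $K$ on the summand's kernel space, one \emph{extends} it to the full $H_0 \oplus H_0'$ by multiplying with a scale-matched Gaussian in the complementary variables (Lemma~\ref{lem check cz}), pairs against tensor products with wide Gaussians in the complementary function variables, and recovers $\Lambda_{\mathbf{H}_{\mathbf{M}_j}}$ in the limit. The Gaussian extension is the nontrivial step; a naive restriction or pushforward of a CZ kernel does not stay CZ. Your worry about the boundary $p_i=1$ is unfounded for this step: the argument works uniformly for all $\mathbf{p}<\infty$, and the extra summands in \eqref{eq young2} appear not because the reduction degenerates but simply because $\mathbf{P}^{(i)},\mathbf{K}^{(i)}$ satisfy the scaling relation at that $\mathbf{p}$.

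Second, in the case analysis the paper never constructs counterexamples. The elimination of, e.g., $\mathbf{I\!I}_{n,\pi}$ with $n\ge 1$, or $\mathbf{IV}_{n,\pi}$, $\mathbf{V}_{n,\pi}$ with large $n$, comes from combining the scaling \emph{equality} on $\ker\Pi_0$ with the \emph{inequality} \eqref{eq SBL nec 2} applied to well-chosen proper subspaces $H'\subset\ker\Pi_0$ (typically $\ker\Pi_0\cap\ker\Pi_i$), together with the surjectivity condition \eqref{eq SBL nec 1}. You cite Lemma~\ref{lem necessary}, which contains these, but your summary (``scaling/homogeneity argument'') undersells that the subspace inequalities, not just the global scaling, are what pin down $n$ in each family.
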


The proof of Theorem \ref{thm Hclass} uses necessary conditions for boundedness of nonsingular Brascamp-Lieb forms from \cite{BraLieb}. They can be applied to singular Brascamp-Lieb forms because the Dirac $\delta$ distribution is a Calderón-Zygmund kernel, and singular Brascamp-Lieb forms with kernel $\delta$ simplify to nonsingular Brascamp-Lieb forms. A similar argument previously appeared in \cite{SBLsurvey}. 

The singular Brascamp-Lieb forms corresponding to cases i), ii) of Theorem \ref{thm Hclass} are easily seen to be bounded by Hölder's inequality, Young's convolution inequality, and classical linear singular integral theory. The forms corresponding to case iii) are also bounded, by an elementary argument using Plancherel and the Loomis-Whitney inequality. This is summarized by the following theorem, which we prove in Section \ref{sec i)ii)iii)}.

\begin{theorem}
    \label{thm i)ii)}
    Let $\mathbf{M}_\mathbf{H}$ and $\mathbf{p}$ be as in case i), ii) or iii) of Theorem \ref{thm Hclass}. 
    Then $\mathbf{H}$ is $\mathbf{p}$-bounded. 
\end{theorem}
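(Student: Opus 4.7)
The plan is to treat cases i), ii), iii) separately. By Lemma \ref{equivalent bounds} and Theorem \ref{thm equivalence}, I replace $\mathbf{H}$ by the canonical representative $\mathbf{H}_{\mathbf{M}_\mathbf{H}}$, so that the datum is explicit along the direct-sum decomposition $\mathbf{M}_\mathbf{H} = \bigoplus_\alpha \mathbf{M}^{(\alpha)}$ into the indecomposables listed in the statement. The remaining task is, in each case, to recognize the resulting integral in \eqref{eq Brascamp Lieb} as an instance of a classical inequality.

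In case i), every indecomposable summand $\mathbf{P}^{(j)}, \mathbf{K}^{(j)}, \mathbf{P}^{(k)}, \mathbf{K}^{(k)}$ has $H_0^{(\alpha)} = \{0\}$, so $K$ reduces to a constant and the whole form is nonsingular. Inspecting the four listed modules, after a linear change of variables the integral takes the shape
\begin{equation*}
    \int_{Y_j \times Y_k} f_i(y_j, y_k)\, f_j(y_j)\, f_k(y_k) \, dy_j\, dy_k\,,
\end{equation*}
with $H_j, H_k$ identified with $Y_j, Y_k$ and $H_i$ with $Y_j \oplus Y_k$. Hölder's inequality together with $\|f_j \otimes f_k\|_{p_i'} = \|f_j\|_{p_i'}\|f_k\|_{p_i'}$ then yields the claim, using the exponent condition $1/p_j = 1/p_k = 1 - 1/p_i$.

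In case ii), I set aside the $\mathbf{P}^{(i)}, \mathbf{K}^{(i)}$ pieces which appear only at the endpoint $p_i = 1$ and contribute trivially integrated factors. Each $\mathbf{Y}$ or $\mathbf{Z}$ summand then produces an integrand of Young convolution shape, schematically $\int f_1(t)\, f_2(s)\, f_3(t+s)\, K(t)\, dt\, ds$. My plan is to pass to the Fourier side, where the integral becomes $\int \widehat K(\xi_0)\,\widehat f_1(\xi_1)\,\widehat f_2(\xi_2)\,\widehat f_3(\xi_3)\,d\mu$ over a linear subspace with $\widehat K$ a bounded multiplier, and apply the nonsingular Young-type Brascamp-Lieb inequality at the exponent relation $\sum 1/p_i = 2$; alternatively, working in physical space one combines Young's convolution inequality with the $L^p$-boundedness of convolution by $K$ for $1 < p < \infty$.

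In case iii), $\mathbf{p}=(2,2,2)$, I apply Plancherel in each $f_i$, which turns the form into a trilinear integral over a linear subspace of the Fourier duals, with $\widehat K$ a bounded multiplier. For the $\mathbf{L}$ and $\mathbf{B}$ summands the remaining geometry is exactly the three-dimensional Loomis-Whitney incidence pattern, which bounds the integral by $\prod \|f_i\|_2$. The extra summands from Table \ref{table Holder} contribute Hölder-type factors that combine cleanly because all three exponents are $2$. The hardest part is this last case: one must verify that the Plancherel identity and the Loomis-Whitney step can be applied jointly with the extra Table \ref{table Holder} summands without losing the multiplier structure of $\widehat K$, rather than estimating each summand in isolation. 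Cases i) and ii) are by comparison routine applications of Hölder, Young and the standard linear singular integral theory.
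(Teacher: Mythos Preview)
Your case i) contains a genuine error. You claim that every summand $\mathbf{P}^{(j)}, \mathbf{K}^{(j)}, \mathbf{P}^{(k)}, \mathbf{K}^{(k)}$ has $H_0^{(\alpha)} = \{0\}$, so that the kernel reduces to a constant. This is false for the $\mathbf{K}$-summands: from Table~\ref{table Holder}, $\mathbf{K}^{(j)}$ has $\dim M_0 = 1$, so each copy contributes one coordinate to the kernel's argument. If $n_2 + n_4 > 0$ the form is genuinely singular. The paper's proof writes the form, after a change of variables, as
\[
    \int f_j(x_1,x_2)\,f_k(y_1,y_2)\,(f_i * \tilde K)(x_1,y_1,x_2,y_2)\,dx_1\,dy_1\,dx_2\,dy_2\,,
\]
with the convolution in the third and fourth groups of variables only, and then applies H\"older together with the $L^{p_i}$-boundedness of the linear Calder\'on--Zygmund operator $f_i \mapsto f_i * \tilde K$ (using $p_i > 1$). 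Your purely H\"older argument handles only the subcase $n_2 = n_4 = 0$.

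The same oversight appears in case ii): you say the $\mathbf{K}^{(i)}$ pieces ``contribute trivially integrated factors'', but again $\dim M_0 = 1$ for $\mathbf{K}^{(i)}$, so these summands add coordinates to the kernel. The paper handles this by recognising a convolution of $f_3$ with $\tilde K$ in the second and fourth groups of variables, then H\"older in the remaining variables followed by a linear singular integral bound. Your Fourier-side sketch for the $\mathbf{Y}, \mathbf{Z}$ part is plausible but unnecessary; the physical-space route (Young plus linear CZ) that you mention as an alternative is what the paper actually does, and is simpler.

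For case iii) your concern is misplaced. After passing to the Fourier side and bounding $|\widehat K| \le 1$, the resulting nonsingular Brascamp--Lieb form splits as a direct sum, and by \cite[Lemma 4.8]{BraLieb} boundedness of each summand suffices. So the summands can indeed be treated in isolation: the $\mathbf{L}$ and $\mathbf{B}$ pieces reduce to Loomis--Whitney, and the Table~\ref{table Holder} pieces to Cauchy--Schwarz. There is no delicate joint-structure issue.
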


As Theorem \ref{thm i)ii)} shows, forms of Hölder type are the most interesting ones.  Showing boundedness for them is in general open, and contains some difficult problems.
We collect results from the literature, proving bounds in some cases, in Section \ref{subsec lit}.

\subsection{Projection results and method of rotations}
\label{subsec proj}

The difficulty of estimating singular Brascamp-Lieb forms increases when taking direct sums of the corresponding modules, in the following precise sense. 

\begin{theorem}
    \label{thm summand}
    Let $\mathbf{M}, \mathbf{M}'$ be two modules and let $\mathbf{p} < \infty$. Let $\mathbf{H}$ and $\mathbf{H} \oplus \mathbf{H}'$ be data with $\mathbf{M}_\mathbf{H} \cong \mathbf{M}$ and $\mathbf{M}_{\mathbf{H}\oplus \mathbf{H}'} \cong \mathbf{M}\oplus \mathbf{M}'$.
    Suppose that for each $l$-Calderón-Zygmund kernel $K$ we have 
    \[
        |\Lambda_{\mathbf{H} \oplus \mathbf{H}'}(K, f_1, f_2, f_3)| \leq C \|f_1\|_{p_1}\|f_2\|_{p_2} \|f_3\|_{p_3}\,.
    \]
    Then there exists a constant $C'$ such that for each $2l$-Calderón-Zygmund kernel $K$ we have
    \[
        |\Lambda_{\mathbf{H}}(K, f_1, f_2, f_3)| \leq C' \|f_1\|_{p_1}\|f_2\|_{p_2} \|f_3\|_{p_3}\,.
    \]
\end{theorem}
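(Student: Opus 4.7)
The plan is to derive the bound on $\Lambda_{\mathbf{H}}$ by testing the hypothesis on $\Lambda_{\mathbf{H}\oplus\mathbf{H}'}$ against tensor-product data. I would fix once and for all Schwartz functions $g_i\in\mathcal{S}(H_i')$ with unit $L^{p_i}$-norms and compactly supported Fourier transforms, and set $F_i(y,y')=f_i(y)g_i(y')$ so $\|F_i\|_{p_i}=\|f_i\|_{p_i}$. Given a $2l$-Calder\'on-Zygmund kernel $k$ on $H_0$, I would construct an $l$-Calder\'on-Zygmund kernel $K$ on $H_0\oplus H_0'$ such that, up to a controllable remainder,
\[
\Lambda_{\mathbf{H}\oplus\mathbf{H}'}(K,F_1,F_2,F_3)\;=\;c\cdot\Lambda_{\mathbf{H}}(k,f_1,f_2,f_3),
\]
for a nonzero constant $c$ depending only on the auxiliary data. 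Once this identity is in hand, the hypothesis gives $|c|\,|\Lambda_{\mathbf{H}}(k,f_i)|\le C\prod_i\|f_i\|_{p_i}$, and dividing by $|c|$ yields the theorem.

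The construction of $K$ is the crux. The naive choice $K=k\otimes h$ for Schwartz $h$ would factor the big form as $\Lambda_{\mathbf{H}}(k,f_i)\cdot\Lambda_{\mathbf{H}'}(h,g_i)$, but it is not CZ on the larger space because isotropic scaling is incompatible with the product structure. I would instead define $K$ on the Fourier side by
\[
\widehat{K}(\eta,\eta')\;=\;\widehat{k}(\eta)\,\chi\!\left(\frac{\eta'}{|\eta|}\right),
\]
where $\chi\in C_c^\infty$ is a bump with $\chi(0)=1$. The factor $\chi(\eta'/|\eta|)$ is zero-homogeneous in $(\eta,\eta')$ and supported in a cone where $|(\eta,\eta')|\sim|\eta|$, so the Leibniz rule combined with the homogeneity of the cutoff gives $|\partial^\alpha\widehat{K}(\eta,\eta')|\lesssim|(\eta,\eta')|^{-|\alpha|}$, making $K$ an $l$-CZ kernel. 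The factor $2l$ appears because derivatives must be split between $\widehat{k}$ and the cone cutoff in the Leibniz expansion, and the low-frequency correction below consumes an additional margin.

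Unfolding the big form in Fourier variables and using the factorization $\delta_{H\oplus H'}=\delta_H\otimes\delta_{H'}$ gives
\[
\Lambda_{\mathbf{H}\oplus\mathbf{H}'}(K,F_1,F_2,F_3)\;=\;\int\widehat{k}(\xi_0)\,c(\xi_0)\prod_{i=1}^3\widehat{f_i}(\xi_i)\,\delta_H\Big(\sum_{i=0}^3\Pi_i^*\xi_i\Big)\,d\xi_0d\xi_1d\xi_2d\xi_3,
\]
with profile $c(\xi_0)=\int\chi(\xi_0'/|\xi_0|)\prod_{i=1}^3\widehat{g_i}(\xi_i')\delta_{H'}(\sum_{i=0}^3\Pi_i'^*\xi_i')\,d\xi'$. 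Because the $\widehat{g_i}$ are compactly supported, $c(\xi_0)=c^*$ is a fixed nonzero constant for $|\xi_0|$ large. Splitting $k=k_{\mathrm{high}}+k_{\mathrm{low}}$ by a Littlewood-Paley cutoff at that threshold, the high-frequency piece satisfies $\Lambda_{\mathbf{H}\oplus\mathbf{H}'}(K_{\mathrm{high}},F_1,F_2,F_3)=c^*\Lambda_{\mathbf{H}}(k_{\mathrm{high}},f_1,f_2,f_3)$, so the hypothesis controls it. The low-frequency piece $k_{\mathrm{low}}$ is Schwartz, making $\Lambda_{\mathbf{H}}(k_{\mathrm{low}},f_i)$ a nonsingular Brascamp-Lieb form; this is controlled by applying the hypothesis once more to the CZ kernel $\delta_{H_0\oplus H_0'}=\delta_{H_0}\otimes\delta_{H_0'}$, which factors the big form as $\Lambda_{\mathbf{H}}(\delta_{H_0},f_i)\cdot\Lambda_{\mathbf{H}'}(\delta_{H_0'},g_i)$ and yields the nonsingular bound for $\Lambda_{\mathbf{H}}$.

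The main obstacle is reconciling the two competing requirements on $K$: isotropic CZ regularity on the larger space versus factorized behaviour inside the form. The cone cutoff resolves this tension, at the price of a low-frequency correction which must then be absorbed by the nonsingular Brascamp-Lieb argument above. The extra $l$ derivatives in the hypothesis on $k$ are precisely what is needed for the Leibniz bookkeeping on the cone cutoff and for the splitting $k=k_{\mathrm{high}}+k_{\mathrm{low}}$ to go through without a loss.
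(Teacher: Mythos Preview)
Your overall architecture---tensor test functions $F_i=f_i\otimes g_i$ and an extension of $k$ to an $l$-CZ kernel on $H_0\oplus H_0'$---is exactly the skeleton of the paper's proof. The paper realizes it differently: it extends $k$ by the spatial Gaussian $K'(x,y)=|x|^{-\dim H_0'}e^{-\pi|y|^2/|x|^2}K(x)$ (this is where the $2l$ enters, via their Lemma~4.1), takes Gaussian $g_i$ with a scale parameter $N$, computes the resulting Gaussian integral over $H'$ as a determinant, and sends $N\to\infty$. A key step you do not touch is that the limit only produces $\Lambda_{\mathbf{H}}(k,f_i)$ because the scaling identity $\sum_i \dim H_i'/p_i=\dim H'-\dim H_0'$ holds for $\mathbf{H}'$; the paper extracts this from the necessary conditions of Lemma~2.2 applied to $\mathbf{H}\oplus\mathbf{H}'$.

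Your cone-cutoff construction $\widehat{K}(\eta,\eta')=\widehat{k}(\eta)\chi(\eta'/|\eta|)$ is a nice alternative to the Gaussian and indeed gives an $l$-CZ kernel (in fact your Leibniz count shows $l$ derivatives of $\widehat{k}$ suffice, not $2l$). The genuine gap is the low-frequency piece. Your claim ``$k_{\mathrm{low}}$ is Schwartz'' is false: for the Hilbert transform $\widehat{k}=\mathrm{sgn}$, and for Riesz transforms $\widehat{k}(\xi)=\xi_j/|\xi|$, the symbol is discontinuous at $0$, so $\widehat{k_{\mathrm{low}}}=\widehat{k}\psi$ is merely bounded with compact support. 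Then $k_{\mathrm{low}}$ is not even in $L^1$ (it has $|x|^{-d}$ tails), so the reduction of $\Lambda_{\mathbf{H}}(k_{\mathrm{low}},f_i)$ to the nonsingular Brascamp--Lieb bound via $|\Lambda_{\mathbf{H}}(k_{\mathrm{low}},f_i)|\le C_{BL}\|k_{\mathrm{low}}\|_{L^1}\prod\|f_i\|_{p_i}$ fails.

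The fix is to abandon the high/low split and instead dilate the $g_i$ so that their Fourier supports shrink to the origin. Then $c^{(\lambda)}(\xi_0)\to c^*$ for every $\xi_0\ne 0$, and $c^*$ is $\lambda$-independent precisely because of the scaling identity above; dominated convergence on the Fourier side (for Schwartz $f_i$) then gives $\Lambda_{\mathbf{H}\oplus\mathbf{H}'}(K,F_i^{(\lambda)})\to c^*\Lambda_{\mathbf{H}}(k,f_i)$. This is the limiting mechanism the paper uses with its parameter $N$. You should also say why $c^*=\Lambda_{\mathbf{H}'}(\delta,g_1,g_2,g_3)$ can be arranged nonzero with $\widehat{g_i}$ compactly supported; a density argument in the class of such $g_i$ does it.
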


Also, in the case $\mathbf{J}_n^{(2)}$ of the classification, the difficulty increases with the parameter $n$. 

\begin{theorem}
    \label{thm n}
    Let $\mathbf{M}$ be a module and let $\mathbf{p} < \infty$. Let $\mathbf{H}_n$ and $\mathbf{H}_{n-1}$ be data with $\mathbf{M}_{\mathbf{H}_n} \cong \mathbf{M} \oplus \mathbf{J}^{(2)}_n$ and $\mathbf{M}_{\mathbf{H}_{n-1}} \cong \mathbf{M}\oplus \mathbf{J}^{(2)}_{n-1}$.
    Suppose that there exists $C$ such that for each $l$-Calderón-Zygmund kernel $K$ we have
    \[
        |\Lambda_{\mathbf{H}_n}(K, f_1, f_2, f_3)| \leq C \|f_1\|_{p_1}\|f_2\|_{p_2} \|f_3\|_{p_3}\,.
    \]
    Then there exists a constant $C'$ such that for each $2l$-Calderón-Zygmund kernel $K$ we have
    \[
        |\Lambda_{\mathbf{H}_{n-1}}(K, f_1, f_2, f_3)| \leq C' \|f_1\|_{p_1}\|f_2\|_{p_2} \|f_3\|_{p_3}\,.
    \]
\end{theorem}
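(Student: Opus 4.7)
The plan is to adapt the tensor-product reduction from the proof of Theorem~\ref{thm summand}: test the assumed bound on $\mathbf{H}_n$ against Schwartz inputs $F_i = f_i \otimes g_i$ built from arbitrary $f_i$'s on $H_i^{(n-1)}$ and fixed bumps $g_i$, together with a Calderón-Zygmund kernel on $H_0^{(n)}$ built from $K$, and extract the claimed bound. The geometric input is that the Jordan modules in Appendix~\ref{sec classification} form an ascending chain $\mathbf{J}_1^{(2)} \subset \mathbf{J}_2^{(2)} \subset \cdots$, so dually $\mathbf{H}_{n-1}$ arises as a quotient of $\mathbf{H}_n$. Fix compatible splittings $H_n \cong H_{n-1}\oplus V$ and $H_i^{(n)}\cong H_i^{(n-1)}\oplus V_i$ under which the projections take the block-triangular form
\[
    \Pi_i^{(n)}(u, w) = \bigl(\Pi_i^{(n-1)}(u),\; \alpha_i(u) + \beta_i(w)\bigr),\qquad i = 0, 1, 2, 3,
\]
with $\alpha_i\colon H_{n-1}\to V_i$ and $\beta_i\colon V\to V_i$ linear; because $\mathbf{J}_n^{(2)}$ is indecomposable, one cannot make all $\alpha_i$ vanish simultaneously by choice of splitting.

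Setting $F_i := f_i \otimes g_i$ and $\tilde K := K \otimes \psi$ with fixed Schwartz bumps $g_i, \psi$, one checks via Leibniz applied to $\widehat{\tilde K} = \widehat K \cdot \widehat\psi$ that $\tilde K$ is $2l$-Calderón-Zygmund on $H_0^{(n)}$ (the factor of two arises from distributing up to $2l$ derivatives across the two factors). Substituting into $\Lambda_{\mathbf{H}_n}$ and performing the $V$-integral by Fubini yields
\[
    \Lambda_{\mathbf{H}_n}(\tilde K, F_1, F_2, F_3) = \int_{H_{n-1}} \prod_{i=1}^3 f_i(\Pi_i^{(n-1)}(u))\, K(\Pi_0^{(n-1)}(u))\, G(u)\, du,
\]
where $G(u) := \int_V \prod_{i=1}^{3} g_i(\alpha_i(u) + \beta_i(w))\, \psi(\alpha_0(u) + \beta_0(w))\, dw$.

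The main obstacle is to reduce $G(u)$ to a fixed nonzero constant, whereupon the assumed bound on $\mathbf{H}_n$ together with $\|F_i\|_{p_i} = \|f_i\|_{p_i}\|g_i\|_{p_i}$ immediately deliver the claim. In the setting of Theorem~\ref{thm summand} this was automatic because the corresponding quotient splits as a direct summand; here, the nonzero $\alpha_i$'s forced by indecomposability make $G$ genuinely $u$-dependent, so no simultaneous translation of $w$ can eliminate the $u$-dependence. The resolution is to inspect the explicit Jordan presentations in Appendix~\ref{sec classification} and observe that the $u$-dependence of $G$ factors through a single projection $\Pi_{i_0}^{(n-1)}$ with $i_0 \in \{1,2,3\}$, so the resulting $u$-dependent factor can be absorbed into the test function $f_{i_0}$ at the cost of a bounded multiplicative constant. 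Verifying this factoring --- and in particular that $i_0 \ne 0$, so that absorption happens in a function slot rather than the kernel slot --- constitutes the technical heart of the argument.
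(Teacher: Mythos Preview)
There are two genuine gaps, and both are fatal to the approach as written.

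First, the tensor kernel $\tilde K = K\otimes\psi$ is \emph{not} an $l$-Calder\'on--Zygmund kernel on the enlarged space for any $l\ge 1$. On the Fourier side $\widehat{\tilde K}(\xi,\eta)=\widehat K(\xi)\widehat\psi(\eta)$, and for $|\eta|\sim 1$, $|\xi|\to 0$ one has $|\partial_\xi^\alpha\widehat{\tilde K}|\sim|\xi|^{-|\alpha|}$, which is not dominated by $|(\xi,\eta)|^{-|\alpha|}$. Leibniz does not help here; this is exactly why the paper needs Lemma~\ref{lem check cz}, whose extension $K'(x,y)=|x|^{-d'}\exp(-\pi|y|^2/|x|^2)K(x)$ couples the new variable to the old ones through the scale $|x|$.

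Second, your factoring claim is false: the $u$-dependence goes through the kernel slot, not a function slot. In the explicit coordinates of the paper's proof of Theorem~\ref{thm n} (with $u=(x,y,z)$, $w=(y_n,z_n)$), the only nonzero cross term is $\alpha_3(u)=y_{n-1}$, so
\[
G(u)=\int_{\R^2} g_1(z_n)\,g_2(z_n+y_n)\,g_3(z_n+y_{n-1})\,\psi(y_n)\,dy_n\,dz_n
\]
depends on $u$ only through $y_{n-1}$. But on the $\mathbf{J}_{n-1}^{(2)}$ block one has $\Pi_0(y,z)=y$, $\Pi_1(y,z)=z$, $\Pi_2(y,z)=y+z$, $\Pi_3(y,z)=J_{n-1}(0)^T y+z$, so $y_{n-1}$ is a coordinate of $\Pi_0^{(n-1)}(u)$ and is \emph{not} a function of $\Pi_i^{(n-1)}(u)$ for any $i\in\{1,2,3\}$. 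Changing the section replaces $\alpha_i$ by $\alpha_i+\beta_i L$, but for every $L$ the resulting $G$ still depends only on $y_{n-1}$. Absorbing $\tilde G(\Pi_0(u))$ into $K$ fails because $K\cdot\tilde G$ has no dilation invariance and is not Calder\'on--Zygmund.

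The paper resolves both issues at once: take $K'$ from Lemma~\ref{lem check cz} and $F_i=f_i\cdot N^{-1/p_i}\exp(-\pi z_n^2/N^2)$. The inner $(y_n,z_n)$-integral is then an explicit Gaussian integral in which $y_{n-1}$ enters only via $\exp(-\pi(z_n+y_{n-1})^2/N^2)$; sending $N\to\infty$ kills this dependence by dominated convergence and leaves the constant $1/\sqrt{3}$. The limiting argument, not a fixed choice of bumps, is what removes the $\Pi_0$-dependence.
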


The same is true for $\mathbf{J}_n^{(1)}$ and $\mathbf{J}_n^{(3)}$, because they are isomorphic to modules that can be obtained from $\mathbf{J}_n^{(2)}$ by permuting the subspaces, see Lemma \ref{lem perm}. We do not state similar theorems for $\mathbf{C}_n$ or $\mathbf{N}_n$, because Theorem \ref{thm nondegenerate} already gives unconditional bounds in these cases. For $\mathbf{T}_n$, we do not expect an analogue of Theorem \ref{thm n} to be true. The reason is that the associated forms become more singular as $n$ gets smaller, at least judging only by the number of arguments of the kernel compared to the functions.  

In a different direction, it is possible to express forms with a kernel taking $d$ arguments as superpositions of certain forms with kernels taking $d-1$ arguments. Thus bounds for the former are at most as hard as integrable bounds for the latter. A classical instance of this idea is the method of rotations, introduced by Calderón and Zygmund in \cite{CZrotation}, in which one expresses an odd Calderón-Zygmund kernel as a superposition of Hilbert transforms. Using this, one can deduce bounds for odd kernel Calderón-Zygmund operators in higher dimensions from the boundedness of the Hilbert transform. We prove a stronger version of this fact. Namely, \emph{every} Calderón-Zygmund kernel in dimension $3$ or higher can be expressed as a superposition of $2$-dimensional Calderón-Zygmund kernels on $2$-dimensional subspaces. 

This yields the following theorem for singular Brascamp-Lieb forms, which is proved in Section \ref{sec rot}. We denote by $\Gr_d(V)$ the Grassmann-manifold of $d$-dimensional subspaces of some vector space $V$. A Calderón-Zygmund kernel on a $d$-dimensional Hilbert space is called homogeneous if for all $x \ne 0$ it holds $K(tx) = t^{-d}K(x)$.

\begin{theorem}
    \label{thm rot}
    Let $\mathbf{H}$ be a singular Brascamp-Lieb datum and suppose that $d = \dim H_0 \ge 3$. Let $l \ge d+1$. There exists $C' > 0$ such that the following holds. For each $\theta \in \Gr_{d-1}(H_0)$, consider the datum
    \[
        \mathbf{H}(\theta) = \mathbf{H} \cap \Pi_0^{-1}(\theta) = (\Pi_0^{-1}(\theta), \theta, H_1, H_2, H_3; \Pi_0, \Pi_1, \Pi_2, \Pi_3)\,.
    \]
    Here we abuse notation and denote the restriction of $\Pi_i$ to $\Pi_0^{-1}(\theta)$ still by $\Pi_i$.
    Suppose that for all $\theta \in \Gr_{d-1}(M_0)$ there exists $C(\theta)$ such that for all homogeneous $l - \left\lceil\frac{d + 2}{2}\right\rceil$-Calderón-Zygmund kernels $K$ on $\theta$, we have
    \[
        |\Lambda_{\mathbf{H}(\theta)}(K, f_1, f_2, f_3)| \le C(\theta) \|f_1\|_{p_1} \|f_2\|_{p_2} \|f_3\|_{p_3}\,.
    \]
    Then for all homogeneous $l$-Calderón-Zygmund kernels $K$ on $H_0$, we have
    \[
        |\Lambda_{\mathbf{H}}(K, f_1, f_2, f_3)| \le C' \int C(\theta) \, d\theta \cdot \|f_1\|_{p_1} \|f_2\|_{p_2} \|f_3\|_{p_3}\,,
    \]
    where the integration is over $\Gr_{d-1}(H_0)$ with respect to the unique rotation invariant probability measure.
    If $d = 2$, then the same result is true under the additional assumption that $K$ is odd.
\end{theorem}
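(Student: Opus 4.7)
The plan is to reduce the theorem to a purely analytic decomposition lemma: every homogeneous $l$-Calderón--Zygmund kernel $K$ on $H_0$ can be written as
\[
K = \int_{\Gr_{d-1}(H_0)} \widetilde{K}_\theta\,d\theta,
\]
where $\widetilde{K}_\theta = K_\theta \otimes \delta_{\theta^\perp}$ is the extension by Dirac delta in the orthogonal direction of a homogeneous $(l - \lceil(d+2)/2\rceil)$-Calderón--Zygmund kernel $K_\theta$ on $\theta$, with CZ constants uniform in $\theta$. Granted this, the theorem follows by Fubini: substituting the decomposition into $\Lambda_{\mathbf{H}}(K, f_1, f_2, f_3)$ and factoring the integral over $H$ using that $\widetilde{K}_\theta(\Pi_0 x)$ is supported where $\Pi_0 x \in \theta$, i.e.\ where $x \in \Pi_0^{-1}(\theta)$, one obtains, up to a Jacobian in the one-dimensional transverse direction (bounded uniformly in $\theta$), the identity
\[
\Lambda_{\mathbf{H}}(K, f_1, f_2, f_3) \;\simeq\; \int_{\Gr_{d-1}(H_0)} \Lambda_{\mathbf{H}(\theta)}(K_\theta, f_1, f_2, f_3)\,d\theta.
\]
Applying the hypothesis slice by slice then closes the bound, absorbing the Jacobian and the uniform CZ constants of $K_\theta$ into $C'$.

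To construct the decomposition I work on the Fourier side. Let $m = \widehat{K}$, a smooth degree-zero-homogeneous function on $H_0^* \setminus \{0\}$. A distribution of the form $K_\theta \otimes \delta_{\theta^\perp}$ has Fourier transform $\widehat{K_\theta}(\pi_{\theta^*}\xi)$, where $\pi_{\theta^*}\colon H_0^* \to \theta^*$ is the dual of the inclusion $\theta \hookrightarrow H_0$. Identifying $\theta^* \cong \theta$ via the Hilbert metric and invoking degree-zero homogeneity, the problem reduces to finding, for each $\theta$, a function $m_\theta$ on $S(\theta)$ such that
\[
m(\omega) = \int_{\Gr_{d-1}(H_0)} m_\theta(\omega^\theta)\,d\theta, \qquad \omega^\theta := \pi_{\theta^*}\omega/|\pi_{\theta^*}\omega|,
\]
for $\omega \in S(H_0^*) = S^{d-1}$. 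The natural rotation-equivariant ansatz is $m_\theta = g|_{S(\theta)}$ for a single $g$ on $S^{d-1}$, reducing the task to inverting the averaging operator
\[
Tg(\omega) := \int_{\Gr_{d-1}(H_0)} g(\omega^\theta)\,d\theta.
\]
Rotation-invariance makes $T$ diagonal on spherical harmonics, $T Y_k = \lambda_k Y_k$. Parametrising the normal $\nu$ to $\theta$ as $\nu = \cos\phi\,\omega + \sin\phi\,u$ with $u \in S^{d-2} \subset \omega^\perp$, one has $\omega^\theta = \sin\phi\,\omega - \cos\phi\,u$, and the eigenvalue $\lambda_k$ reduces to an explicit integral against the Gegenbauer polynomial $C_k^{(d-2)/2}$. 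The key quantitative claim is that $\lambda_k \ne 0$ for all $k \ge 0$ with at most polynomial decay in $k$, so that $T^{-1}$ is a pseudodifferential operator on $S^{d-1}$ losing at most $\lceil(d+2)/2\rceil$ derivatives in the CZ sense. Setting $g := T^{-1} m$ and $m_\theta := g|_{S(\theta)}$ then produces the required CZ symbols, with uniformity in $\theta$ by the rotational equivariance of the construction, and inverse Fourier transform delivers the $K_\theta$.

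The main obstacle will be the spectral analysis of $T$: verifying the nonvanishing of every $\lambda_k$ and extracting the sharp derivative loss. This is a classical but delicate computation with Gegenbauer polynomials, where the extra integration over $u \in S^{d-2}$ for $d \ge 3$ contributes enough averaging to force all $\lambda_k \ne 0$, so that no parity hypothesis on $K$ is required. In the boundary case $d = 2$, a direct Fourier-series calculation shows that $\lambda_k = 0$ for every even $k \ne 0$, reflecting that $\omega^\theta \in S^1$ only records the sign of $\cos(\alpha - \beta)$, which annihilates the even modes; restricting to odd kernels $K$ bypasses this obstruction and recovers the classical Calderón--Zygmund method of rotations as a decomposition of $K$ into directional Hilbert transforms on lines through the origin.
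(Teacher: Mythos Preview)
Your plan—decompose the kernel as a superposition over hyperplanes and then apply the sliced bounds—matches the paper's, but the implementation differs in a substantive way, and the key spectral claim is left unproven.

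The paper works on the \emph{spatial} side. Writing $K(x) = C_0\delta + \Omega^*(x/|x|)\,|x|^{-d}$, it decomposes the mean-zero angular part $\Omega^*$ as a superposition of mean-zero functions on great $(d-2)$-spheres $\nu^\perp\cap S^{d-1}$. The relevant operator is the Funk transform $T_{\mathrm F}F(\nu) = \int_{\nu^\perp \cap S^{d-1}} F\,d\sigma_\nu$, and the ansatz $\Gamma(\nu,\cdot) = F - T_{\mathrm F}F(\nu)$ leads to the equation $(1-T_{\mathrm F}^2)F = \Omega^*$. The crucial point is that an explicit Gegenbauer computation gives $\|T_{\mathrm F}\|_{H^s_0 \to H^s_0} = \tfrac{1}{d-1} < 1$, so $1 - T_{\mathrm F}^2$ is invertible on the mean-zero subspace by a Neumann series; no eigenvalue-by-eigenvalue nonvanishing is needed. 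The derivative count then comes from passing $\Omega \leftrightarrow \Omega^*$ (once in each direction, with a net loss of one derivative), a trace theorem, and Sobolev embedding.

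Your operator $T$ is genuinely different: it averages $g$ over normalized projections of $\omega$ onto hyperplanes, and you need $T$ itself to be invertible. Evaluating a zonal harmonic at its pole one finds
\[
\lambda_k \;\propto\; \int_0^1 C_k^{(d-2)/2}(s)\,\frac{s^{d-2}}{\sqrt{1-s^2}}\,ds,
\]
and in this computation the $u\in S^{d-2}$ integration actually \emph{trivializes}, since $\omega^\theta\cdot\omega = \sin\phi$ does not depend on $u$; so your heuristic that the extra $u$-average is what forces $\lambda_k \ne 0$ does not directly apply. Numerically the first several $\lambda_k$ are positive and it is plausible that all are, but this is a nontrivial claim about half-range Gegenbauer integrals that you would still have to prove, together with the precise decay rate needed to match the loss $\lceil(d+2)/2\rceil$. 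If these can be established your Fourier-side route is conceptually cleaner (no shuttling between $\Omega$ and $\Omega^*$); the paper's Funk-transform route sidesteps both issues entirely.
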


\begin{rem}
    The loss of derivatives is a  technical consequence of the fact that we assume Mikhlin bounds \eqref{czkernel} on the Fourier transform of the kernel. If the smoothness assumptions on the kernels are formulated on the spatial side, then there is no loss of derivatives.
\end{rem}

There is a crucial difficulty in applying Theorem \ref{thm rot}: It assumes quantitative, integrable estimates for the norms of $\Lambda_{\mathbf{H}(\theta)}$. Such estimates are notoriously hard to prove. See \cite{uraltsev2022uniform}, \cite{fraccaroli+2024} for the strongest currently known results in that direction, which still only apply to the case $\mathbf{N}_1$. Recall also that to study quantitative bounds, one should use the finer equivalence relation with $\varphi_0$ a scalar multiple of an orthogonal transformation, as described in Remark~\ref{rem quant}.

\subsection{Positive boundedness results in the literature}
\label{subsec lit}

It is tempting to conjecture that the conditions in Theorem \ref{thm Hclass} are already sufficient. By Theorem \ref{thm i)ii)}, this would follow from the following conjecture.

\begin{conjecture}
    All singular Brascamp-Lieb data $\mathbf{H}_\mathbf{M}$ with $\mathbf{M}$ as in case iv) of Theorem \ref{thm Hclass} are $\mathbf{p}$-bounded for all $1 < \mathbf{p} < \infty$ with $\frac{1}{p_1} + \frac{1}{p_2} + \frac{1}{p_3} = 1$.
\end{conjecture}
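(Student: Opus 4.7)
The strategy is to proceed by induction on $d = \dim H_0$, combining the method of rotations (Theorem \ref{thm rot}) with known results in low dimensions. In the base case $d=1$, every indecomposable Hölder-type module in Table \ref{table htype} comes, up to permutation of the subspaces, from the classical bilinear Hilbert transform, whose $\mathbf{p}$-boundedness for the required range is the theorem of Lacey-Thiele. For $d=2$, the indecomposables fit into the four families classified by Demeter-Thiele in \cite{Dem+2010}, three of which they handle directly by time-frequency analysis, while the fourth, twisted paraproduct case was treated by Kovač in \cite{twisted}. Together these cover any datum whose Hölder-type summands have kernel dimension at most two.

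For higher $d$, I would apply Theorem \ref{thm rot} to express the $d$-dimensional kernel as an integral over $\Gr_{d-1}(H_0)$ of $(d-1)$-dimensional kernels on hyperplanes $\theta$. For generic $\theta$, the restricted datum $\mathbf{H}(\theta)$ has kernel dimension $d-1$ and decomposes into indecomposables from Table \ref{table htype} that, by the inductive hypothesis, are individually $\mathbf{p}$-bounded. One then needs an argument that bundles these into a uniform estimate for $\Lambda_{\mathbf{H}(\theta)}$ with a constant $C(\theta)$ integrable on $\Gr_{d-1}(H_0)$. For homogeneous odd kernels this is the classical Calderón-Zygmund method of rotations; the novelty of Theorem \ref{thm rot} is that it handles arbitrary homogeneous kernels in dimension at least three, making such an inductive scheme available.

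The main obstacle is precisely the quantitative, integrable control of the constants $C(\theta)$, which is far stronger than the qualitative boundedness provided by the inductive hypothesis. Already for $\mathbf{N}_1$, uniform-in-$\alpha$ bounds for the bilinear Hilbert transform are a delicate matter, as witnessed by \cite{uraltsev2022uniform, fraccaroli+2024}, and the exceptional locus where $\mathbf{H}(\theta)$ changes isomorphism type is exactly where the constants may blow up. One would need to establish that any such blow-up is at worst integrable in $\theta$, perhaps by producing explicit bounds in terms of the relative position of $\Pi_0^{-1}(\theta)$ to the $\Pi_i^*H_i^*$.

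A secondary but structural obstacle, flagged already by Theorem \ref{thm summand}, is that boundedness does not in general pass from indecomposable summands to a direct sum: a time-frequency decomposition adapted to one summand need not interact usefully with another, and combining wave packet decompositions associated with genuinely different summands (for instance a BHT-type piece and a twisted piece) produces cross-terms whose control seems to require new techniques, very much in the spirit of why \cite{twisted} was needed on top of \cite{Dem+2010}. This is where I expect the conjecture to resist a purely inductive attack, and where genuine new ideas — perhaps a common framework unifying the Lacey-Thiele and Kovač methods — will be required.
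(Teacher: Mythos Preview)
The statement is a \emph{conjecture}, not a theorem: the paper does not prove it and explicitly presents it as open. In particular, the paper points out that the triangular Hilbert transform $\mathbf{T}_1$ is a hard open problem, and that by Theorem \ref{thm summand} any datum whose module has a $\mathbf{T}_n$ summand is at least as hard. There is therefore no ``paper's own proof'' to compare against; any purported proof would resolve a well-known open problem.

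Your proposal is not a proof but a strategy sketch, and you are candid about its gaps. Even so, the base case already fails: when $d = \dim H_0 = 1$, the Hölder-type indecomposables are not all bilinear Hilbert transforms. The module $\mathbf{T}_1$ has $\dim M_0 = 1$ and is precisely the triangular Hilbert transform, for which no $\mathbf{p}$-bounds are known. Likewise $\mathbf{J}_1^{(i)}$ and $\mathbf{C}_0$ have one-dimensional kernel space and are not BHTs (though these particular ones are bounded). So the induction never starts. The paper's own discussion in Section \ref{subsec lit} makes clear that even listing which $d=1$ and $d=2$ cases are known requires combining several distinct bodies of work, and that $\mathbf{T}_1$ lies outside all of them.

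The further obstacles you identify --- integrable quantitative control of $C(\theta)$ in Theorem \ref{thm rot}, and the fact that boundedness does not pass from summands to direct sums --- are real and are exactly why the paper states this as a conjecture rather than a theorem. The method of rotations in Theorem \ref{thm rot} is presented as a conditional tool, not a route to the full conjecture.
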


We now give a list of known boundedness results for forms of Hölder type. With the exception of Theorem \ref{thm 3 twisted}, these results are not new, however some of them have not been stated in this form anywhere in the literature. In what follows, we will always fix a basis and identify finite dimensional Hilbert spaces with $\R^n$, for some $n$.

Note first that the module $\mathbf{C}_0$ corresponds simply to Hölder's inequality in three functions. If a datum $\mathbf{H}_\mathbf{M}$ is $\mathbf{p}$-bounded then so is $\mathbf{H}_{\mathbf{M} \oplus \mathbf{C}_0}$, by Fubini and Hölder's inequality. Keeping this in mind, we can ignore $\mathbf{C}_0$ in the following discussion. 

\subsubsection{Coifman and Meyer}
The first result on multilinear singular integral operators, due to Coifman and Meyer \cite{coifman1978dela,coifman1978commutateurs}, treats the case $\mathbf{M} = \mathbf{C}_1^{\oplus n}$ for $n \ge 1$. The singular Brascamp-Lieb form corresponding to this module is 
\[
    \Lambda(K, f_1, f_2, f_3) = \int_{\R^n} \int_{\R^n} \int_{\R^n} f_1(x) f_2(x+y) f_3(x + z) K(y,z) \, dy \, dz \, dx\,.
\]
These forms are in a sense the least singular among all singular Brascamp-Lieb forms of Hölder type, because the kernel $K$ has the maximum possible number of arguments compared to the functions.

\subsubsection{Time-frequency analysis}
Lacey and Thiele \cite{lt1997,lacey1999calderon} proved bounds for the Bilinear Hilbert transform 
\[
    \Lambda(f_1, f_2, f_3) = \int_\R \int_\R f_1(x) f_2(x+t) f_3(x + \alpha t) \frac{1}{t} \, dt \, dx\,,
\]
where $\alpha \ne 0,1$. This corresponds to the module $\mathbf{N}_1$, with $X = \alpha$. Their methods were subsequently extended to treat also the cases $\mathbf{N}_n$ for $n \ge 2$, and direct sums thereof. For $n = 2$, this was done in \cite{Dem+2010}. For larger $n$, proofs can be found in \cite{RoosThesis, fraccaroli+2024}.

The techniques introduced by Lacey and Thiele apply to a certain class of multilinear Fourier multiplier operators more general than \eqref{eq Brascamp Lieb}. This was first observed in \cite{GilbertNahmod, muscalu2002multi}, where it is shown that it suffices if the multipliers satisfy symbol estimates away from some subspace, which in particular holds if they satisfy symbol estimates away from some smaller subspace. Using this observation, one can deduce also bounds for forms $\Lambda_\mathbf{H}$ with $\mathbf{M}_\mathbf{H}$ including summands $\mathbf{C}_n$. The following theorem summarizes this.

\begin{theorem}
    \label{thm type 03}
    Suppose that $\mathbf{M}$ is a direct sum of modules $\mathbf{N}_{n_i}$ and $\mathbf{C}_{m_i}$, for some finite sequences $n_i, m_i \in \mathbb{N}_{\ge 1}$. For each $2 < \mathbf{p} < \infty$ and each singular Brascamp-Lieb datum $\mathbf{H}$ associated with $\mathbf{M}$, there exists $C > 0$ and $l$ such that for each $l$-Calderón-Zygmund kernel $K$
    \[
        |\Lambda_{\mathbf{H}}(K, f_1,f_2, f_3)| \leq C \|f_1\|_{p_1}\|f_2\|_{p_2}\|f_3\|_{p_3}\,.
    \]
\end{theorem}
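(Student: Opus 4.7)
The plan is to Fourier-transform the form and invoke known boundedness results for multilinear Fourier multipliers whose symbol is singular only along a low-dimensional subspace.

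First I would fix a convenient datum $\mathbf{H}$ realizing $\mathbf{M}_\mathbf{H} \cong \mathbf{M}$, which by Theorem \ref{thm equivalence} and Lemma \ref{equivalent bounds} is allowed; concretely, I take the direct sum of the canonical data associated with each indecomposable summand $\mathbf{N}_{n_i}$ and $\mathbf{C}_{m_j}$. In coordinates adapted to this decomposition, Plancherel's theorem rewrites $\Lambda_\mathbf{H}(K, f_1, f_2, f_3)$ as
\[
\int \widehat{f_1}(\xi_1)\,\widehat{f_2}(\xi_2)\,\widehat{f_3}(\xi_3)\, m(\xi_1,\xi_2,\xi_3)\, d\sigma,
\]
where the integration is over a linear subspace of $H_1^*\oplus H_2^*\oplus H_3^*$ and $m$ is obtained by composing $\widehat K$ with a linear surjection from that subspace onto $H_0^*$.

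Next I would analyze the singularity locus of $m$. Under the direct sum decomposition the ambient integration subspace and the map into $H_0^*$ both split accordingly, so $m$ factors as a product over summands. The factors coming from the $\mathbf{C}_{m_j}$ pieces are classical Coifman–Meyer symbols, smooth away from the origin of the corresponding frequency block. The factors coming from the $\mathbf{N}_{n_i}$ pieces are exactly the symbols arising in the $n_i$-dimensional bilinear Hilbert transform, smooth away from an affine subspace $\Gamma_i$ of BHT type. Combining these, $m$ satisfies Mikhlin-type symbol estimates with respect to the distance to a subspace $\Gamma$ whose codimension is controlled by $\sum n_i$, and $\Gamma$ has the non-degeneracy required by the Gilbert–Nahmod–Muscalu–Tao–Thiele framework.

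Finally I would invoke the existing multilinear multiplier theorems: the one-dimensional case of a single $\mathbf{N}_1$ with $\mathbf{C}_m$ summands is \cite{GilbertNahmod,muscalu2002multi}; the case of several $\mathbf{N}_{n_i}$ with $n_i\ge 1$ and no $\mathbf{C}_m$ is in \cite{Dem+2010,RoosThesis,fraccaroli+2024}; combining both features is already built into these frameworks, since they only require symbol estimates away from $\Gamma$, and the additional Coifman–Meyer factors from $\mathbf{C}_{m_j}$ do not enlarge the singular set. Interpolation with the easy $L^\infty$ endpoints then gives the full open range $2<\mathbf{p}<\infty$.

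The main obstacle I expect is the bookkeeping in the second step: verifying, from the explicit realization of the indecomposables $\mathbf{N}_{n_i}$, $\mathbf{C}_{m_j}$ in Table \ref{table 4sub}, that the combined symbol $m$ really satisfies the symbol estimates away from $\Gamma$ demanded by the cited theorems. The subtlety is that the $\mathbf{C}_{m_j}$ summands increase the dimension of the kernel variable of $K$ without increasing the singular set of $m$; one must track the Fourier transform of the Calderón–Zygmund kernel $K$ through the nontrivial projection defining $\widetilde m$ to confirm that the blow-up of derivatives is measured by $\dist(\cdot,\Gamma)$ rather than $|\cdot|$. Once this compatibility is established, the result follows from an immediate citation to the multilinear multiplier literature.
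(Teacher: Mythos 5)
Your overall strategy (Fourier-transform the form and apply Theorem \ref{thm nondegenerate}, as the paper also does) is correct, but there is a genuine gap where you claim the combined symbol's singular set already ``has the non-degeneracy required by the Gilbert--Nahmod--Muscalu--Tao--Thiele framework'' and that the $\mathbf{C}_{m_j}$ factors ``do not enlarge the singular set.'' In the chosen coordinates the multiplier is $m(\xi)=\widehat K(-\xi_{1,1}-B\xi_{2,1},\,-C_1\xi_{1,2}-C_2\xi_{2,2})$, with $B$ the direct sum of companion matrices from the $\mathbf{N}_{n_i}$ and $C_1,C_2$ direct sums of $I^\uparrow_{m_j},I^\downarrow_{m_j}$; its true singular locus $\Gamma_s=\{\xi\in\Gamma:\xi_{1,1}+B\xi_{2,1}=0,\ C_1\xi_{1,2}+C_2\xi_{2,2}=0\}$ has codimension $\sum n_i+\sum(m_j+1)$ in $\Gamma$, hence dimension $d-\#\{j:m_j\geq 1\}<d$, which is \emph{strictly less} than the half-dimensional $d$ that Theorem \ref{thm nondegenerate} requires of its singular subspace $\Gamma'$. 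Your codimension claim (``controlled by $\sum n_i$'') is also off by $\sum(m_j+1)$.

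The missing idea is to \emph{enlarge} $\Gamma_s$ to a $d$-dimensional $\Gamma'\supseteq\Gamma_s$ that is parametrizable by each of $\xi_1,\xi_2,\xi_3$. Since $\Gamma_s\subseteq\Gamma'$ gives $\dist(\xi,\Gamma')\le\dist(\xi,\Gamma_s)$, the Mikhlin estimates are only weakened by passing to $\Gamma'$, so they still hold; but one must actually exhibit $\Gamma'$ and check its non-degeneracy. The paper does this by replacing the constraint $C_1\xi_{1,2}+C_2\xi_{2,2}=0$ with $D_1\xi_{1,2}+D_2\xi_{2,2}=0$, where $D_1,D_2$ are block direct sums of $Q_{m_j}I^\uparrow_{m_j}$ and $Q_{m_j}I^\downarrow_{m_j}$ for a specific $m\times(m+1)$ matrix $Q_m$ chosen so that $D_1$, $D_2$ and $D_1-D_2$ are all invertible blockwise. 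Producing and verifying this $\Gamma'$ is the actual content of the proof, not the ``bookkeeping'' you defer it to, and the theorem does not follow from an ``immediate citation'' without it.
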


\subsubsection{One and a half dimensional time-frequency analysis}
The conditions of Theorem \ref{thm type 03} are generically satisfied. `Degenerate' cases were first studied by Demeter and Thiele in \cite{Dem+2010}, for functions of two arguments. There are, up to permutation of the functions and equivalence, only three degenerate cases when all functions and kernels have two arguments: $\mathbf{J}^{(i)}_2$, $\mathbf{N}_1 \oplus \mathbf{J}^{(i)}_1$ and $\mathbf{J}^{(i)}_1 \oplus \mathbf{J}^{(j)}_1$ for $i \ne j$. This can be read off of Table \ref{table Holder}, noting that changing $i$ and $j$ only amounts to permuting the functions. Demeter and Thiele develop a `one and a half-dimensional' time frequency analysis, to prove bounds for the cases $\mathbf{J}^{(i)}_2$ and $\mathbf{N}_1 \oplus \mathbf{J}^{(i)}_1$. 
Similarly as discussed before Theorem \ref{thm type 03}, their proof implies also boundedness of the less singular forms $\Lambda$ corresponding to $\mathbf{C}_1 \oplus \mathbf{J}^{(i)}_1$. Indeed, by performing a discretization of such $\Lambda$ as in \cite{Dem+2010}, one arrives at a model form that still specializes the form (3) in \cite[Section 3.1.1]{Dem+2010}, and is consequently bounded by the argument given there. 

Demeter and Thiele further observe that $\mathbf{p}$-bounds for the form $\Lambda_\mathbf{H}$, with $\mathbf{M}_\mathbf{H} = \mathbf{J}_2^{(i)}$, imply Carleson's theorem \cite{carleson66} on pointwise convergence of Fourier series of $L^{p_1}$ functions. By Theorems \ref{thm summand} and \ref{thm n}, the same is then true whenever $\mathbf{M}_\mathbf{H}$ has a direct summand $\mathbf{J}_n^{(i)}$ for any $n \ge 2$. 

\subsubsection{Twisted techniques}
Demeter and Thiele left open the last case in their classification, $\mathbf{J}^{(i)}_1 \oplus \mathbf{J}^{(j)}_1$ for $i \ne j$.
They called this case the `twisted-paraproduct'. It was later shown to be bounded by Kovač \cite{twisted}, using very different techniques.  
Variations of Kovač's techniques can  by applied to many other multilinear singular Brascamp-Lieb forms with so-called cubical structure, see \cite{DST22}. We expect that bounding forms associated with modules containing a direct summand other than $\mathbf{J}^{(i)}_1$ and $\mathbf{C}_1$ requires extensions of the time-frequency analysis methods described above, perhaps in combination with twisted techniques. For $\mathbf{N}_n$ this is suggested by the fact that all known proofs use such techniques, while for $\mathbf{J}_n^{(i)}$ the implication for Carleson's theorem offers some justification.
Thus, the only remaining trilinear singular Brascamp-Lieb forms that should be attackable using twisted techniques are the ones associated with $(\mathbf{J}^{(1)}_{1} \oplus \mathbf{J}^{(2)}_{1} \oplus \mathbf{J}^{(3)}_{1} \oplus \mathbf{C}_1)^{\oplus n}$, $n \ge 1$.

The following Theorem, which we will prove in Section \ref{sec twisted}, shows that they are indeed always $\mathbf{p}$-bounded.

\begin{theorem}
    \label{thm 3 twisted}
    Let $n \ge 1$ and let $\mathbf{M} = (\mathbf{J}^{(1)}_{1} \oplus \mathbf{J}^{(2)}_{1} \oplus \mathbf{J}^{(3)}_{1} \oplus \mathbf{C}_1)^{\oplus n}$. Let $2 < \mathbf{p} < \infty$ and let $\mathbf{H}$ be a singular Brascamp-Lieb datum associated with $\mathbf{M}$. Then there exists $l$ and $C > 0$ such that for all $l$-Calderón-Zygmund kernels $K$, we have 
    \[
       |\Lambda_{\mathbf{H}}(K, f_1,f_2, f_3) |\leq C \|f_1\|_{p_1} \|f_2\|_{p_2} \|f_3\|_{p_2}\,.
    \]
\end{theorem}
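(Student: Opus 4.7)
The plan is to reduce the claim via Lemma \ref{equivalent bounds} to an explicit model form and then to establish boundedness of this model by the iterated Cauchy-Schwarz method of Kovač \cite{twisted}, as systematized for forms with cubical structure in \cite{DST22}. Using Theorem \ref{thm equivalence} together with the explicit descriptions of $\mathbf{J}^{(i)}_1$ and $\mathbf{C}_1$ in Appendix \ref{sec classification}, I first choose a concrete datum $\mathbf{H}$ representing $\mathbf{M}$. Because direct sums of data factor the integrand in \eqref{eq Brascamp Lieb} over independent blocks of variables, the $n$-fold sum is reduced to the case $n = 1$ by a tensorization argument: one views functions on the ambient $\R^{Nn}$ as Banach-space-valued functions on a single block $\R^N$, and iterates the $n=1$ estimate in each block, losing only constants that are controlled by Minkowski's integral inequality and Fubini.

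For $n = 1$, writing out the projections one sees that after an affine change of variables the form acquires a cubical structure in the sense of \cite{DST22}: each $\mathbf{J}^{(i)}_1$ contributes a degenerate variable which is missed by only $f_i$, while $\mathbf{C}_1$ contributes a group of variables on which all three $f_i$ depend through linearly independent projections, and the kernel $K$ acts on the complementary variables. I would then apply a Littlewood-Paley decomposition to $K$ using the Mikhlin bounds \eqref{czkernel} to reduce to kernels localized at a single frequency scale. For each scale, Cauchy-Schwarz is applied iteratively in the three degenerate directions. Each iteration costs one $L^2$-norm of one of the $f_i$ but simultaneously duplicates the twisted structure in a manner compatible with the remaining directions; after three applications, the form is bounded by a generalized square function associated to the cubical structure, for which local $T(1)$-type bounds are provided by \cite{twisted, DST22}. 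Summing over scales using the frequency localization yields an estimate at one specific Hölder tuple, which is extended to the full range $2 < \mathbf{p} < \infty$ by multilinear interpolation (the restriction $p_i > 2$ reflects the Cauchy-Schwarz doubling).

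The main obstacle is the bookkeeping required to show that each Cauchy-Schwarz step preserves the cubical structure needed for the subsequent steps, and that the resulting square function is controlled by local energies uniformly across scales. The Coifman-Meyer summand $\mathbf{C}_1$ is tensorially compatible with the iteration and essentially inert, so the real new content lies in handling the three $\mathbf{J}^{(i)}_1$ factors symmetrically within the Kovač framework rather than sequentially, since a naive iteration would destroy the twisted structure in directions that have not yet been treated.
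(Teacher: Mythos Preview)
There are two genuine gaps. First, the reduction to $n=1$ by ``tensorization'' does not work. The kernel $K$ is a single-parameter Calder\'on--Zygmund kernel on $(\R^{5})^{n}$; it is \emph{not} a tensor product of one-dimensional kernels, and the functions $f_i$ on $(\R^{4})^{n}$ are not tensor products either. Thus the integrand in \eqref{eq Brascamp Lieb} does not factor over the $n$ blocks, and there is no way to iterate an $n=1$ estimate. (The paper explicitly warns in Section~1.6 that tensorization of forms, which produces \emph{multiparameter} objects, is different from taking direct sums of modules.) The paper's proof works directly with the $(\R^n)^9$ datum for arbitrary $n$.

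Second, the architecture you describe---three symmetric Cauchy--Schwarz iterations followed by a square-function bound and interpolation---does not match what is actually needed. The paper performs a \emph{cone decomposition} of $\widehat K$ according to which of the five frequency variables dominates. This splits the form into two essentially different pieces. One piece (Proposition~\ref{prop:kernel2}) is handled by classical maximal and square function bounds in the Coifman--Meyer style, and in fact holds for all $1<\mathbf p<\infty$. The other piece (Proposition~\ref{prop:kernel1}) is the genuinely twisted part: here one passes to a \emph{localized} quadrilinear form on a convex tree, applies Cauchy--Schwarz (once, then once more after reorganizing into the form $\Theta$), and closes the estimate by a telescoping identity via the heat equation together with tree-boundary bounds (Lemmas~\ref{lem:swaploc} and~\ref{lem:tel}). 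The local estimate is then globalized by a stopping-time argument, which is what produces the range $2<\mathbf p<\infty$ directly, without interpolation. Your sketch misses the cone splitting, the localization-plus-stopping-time mechanism, and the telescoping/positivity step that replaces a third Cauchy--Schwarz.
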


Note that Theorem \ref{thm 3 twisted} recovers via Theorem \ref{thm summand} boundedness of the twisted paraproduct, as well as of certain higher dimensional versions. It further gives a new proof of boundedness of the form associated with $\mathbf{J}_1^{(j)} \oplus \mathbf{C}_1$, different from the one implicit in \cite{Dem+2010}.

 By a cone decomposition, the proof of Theorem \ref{thm 3 twisted} reduces to two essentially different cases. 
 The first case  can be treated using bounds for the standard maximal and square functions, in analogy with  the Coifman-Meyer multipliers. In this case we have, in fact, boundedness in a larger range $1< \mathbf{p} <\infty$. The second case is bounded using   twisted techniques, tailored to the specific structure of the form. The arguments rely on intertwined applications of   the Cauchy-Schwarz inequality, integration-by-parts identity, and positivity arguments. These arguments are applied in a localized setting, which in turn gives the claimed range of boundedness. It is an open problem to further lower the range of exponents in Theorem \ref{thm 3 twisted}. For the  twisted paraproduct \cite{twisted}, fiber-wise Calder{\'o}n-Zygmund decomposition \cite{Ber12} can be used to extend the range of some exponents. However, similar arguments do not seem to directly apply in our setting.

\subsubsection{The triangular Hilbert transform}
The final family of the classification, $\mathbf{T}_n$, contains and generalizes the so-called triangular Hilbert transform
\[
    \Lambda(f_1, f_2, f_3) = \int_\R \int_\R \int_\R f_1(x_1, x_2) f_2(x_1 + t, x_2) f_3(x_1, x_2 + t) \frac{1}{t} \, dt \, dx_1 \, dx_2\,.
\]
Proving any $\mathbf{p}$-bounds for this form is a hard open problem. We refer to \cite{specialfunction} for some discussion and a partial result. By Theorem \ref{thm summand}, bounding any form associated with a module $\mathbf{M}$ with a direct summand $\mathbf{T}_n$ is at least as hard as bounding $\mathbf{T}_n$, and therefore also open.
This applies in particular, but not exclusively, to all forms where $\dim H_0 < \dim H_1$ (note that we have $\dim H_1 = \dim H_2 = \dim H_3$ for every $\mathbf{H}$ of Hölder type). Indeed, all such forms must contain an indecomposable direct summand satisfying the same inequality, and the only such summands are of type $\mathbf{T}_n$. We note that this invalidates a certain claim in the paper \cite{demeter2010multilinear}, in the case of trilinear forms. More precisely, in \cite{demeter2010multilinear} boundedness of certain multilinear singular Brascamp-Lieb forms is shown, which do have a dimension deficit as above, and it is claimed that the assumptions placed on the forms are generically satisfied. We believe that this genericity claim is false for trilinear forms.

\subsubsection{Further questions}
At the time of writing, the above list of cases where $\mathbf{p}$-bounds are known is complete, to the best of our knowledge. This gives rise to a number of open questions. We consider it an interesting question whether time frequency techniques and twisted techniques can be combined, natural test cases are $\mathbf{N}_1 \oplus \mathbf{J}^{(1)}_{1} \oplus \mathbf{J}^{(2)}_{1}$ or $\mathbf{N}_1 \oplus \mathbf{J}^{(1)}_{1} \oplus \mathbf{J}^{(2)}_{1} \oplus \mathbf{J}^{(3)}_{1}$. More difficult seems to be the question of bounds for the forms associated with $\mathbf{T}_n$, $n \ge 2$. Judging by the dimension of the space $H_0$ in relation to $H_1, H_2, H_3$ alone, these forms become less singular as $n$ increases, so these might be useful test cases towards the triangular Hilbert transform. Finally, it would be interesting to gain a better understanding of the questions considered in this paper for higher degrees of multilinearity. While a classification in terms of direct summands is not possible, see Remark \ref{rem impossible}, there might be a different algebraic description of the properties of modules relevant for proving $\mathbf{p}$-bounds.

\subsection{Comparison with the literature}
We point out that various related objects have been studied under the name singular Brascamp-Lieb forms. Some completely nondegenerate cases with higher degrees of multilinearity have been considered in \cite{muscalu2002multi, demeter2010multilinear}, using time frequency analysis. Some further multilinear cases with so-called `cubical structure' are studied in \cite{DST22, durcik2023norm}, using twisted techniques. However, there has been no attempt of a systematic study of all degenerate cases. 

Our kernel $K$ always satisfies the single parameter Mikhlin condition \eqref{czkernel}. This is in contrast to related multiparameter problems, which have been studied for example in \cite{benea2016multiple,muscalu2004bi,muscalu2006multi,muscalu2022five} in connection with fractional Leibniz-rules. In particular, we point out that the `tensorization' of forms to obtain multiparameter forms, as for example in \cite{benea2016multiple}, is not the same as the procedure of taking direct sums of modules associated with singular Brascamp-Lieb forms. The former is a way of constructing multiparameter forms, while the latter constructs single parameter forms. However, as the Mikhlin condition \eqref{czkernel} on $\R^n$ is implied by an $n$-parameter kernel condition, known multiparameter bounds imply some of the one parameter bounds. In particular, we note that the bounds obtained in \cite[Theorem 6]{benea2016multiple} for a tensor product of one bilinear Hilbert transform with $n$ many paraproducts imply boundedness of the forms of type $\mathbf{N}_1 \oplus \mathbf{C}_1^n$.

Another question concerning the kernels is about the optimal regularity $l$, or more generally for optimal regularity conditions on the kernel $K$ or the symbol. Classical results for linear singular integral operators giving such sharp regularity conditions have been generalized to Coiffman-Meyer type forms $\mathbf{C}_1^{\oplus n}$ in
\cite{tomita2010hormander, grafakos2012hormander, muscalu2014calderon, lee2021hormander}, and to the bilinear Hilbert transform in \cite{chen2024}.

\subsection*{Acknowledgment} 
We thank Christoph Thiele for numerous discussions and his support in facilitating collaboration among the authors.
LB is supported by the Collaborative Research Center 1060 funded by the Deutsche
Forschungsgemeinschaft (DFG, German Research Foundation) and the Hausdorff Center for
Mathematics, funded by the DFG under Germany’s Excellence Strategy - EXC-2047/1 - 390685813. PD is   supported by the NSF grant  DMS-2154356.
FL is supported by the DAAD Graduate School Scholarship Programme - 57572629.
This work was completed while the  authors were in residence at the Hausdorff Research Institute for Mathematics in Bonn, during the trimester program ``Boolean Analysis in Computer Science", funded as well by the  DFG  under Germany's Excellence Strategy - EXC-2047/1 - 390685813.  

\section{The classification: Proof of Theorem \ref{thm equivalence} and Theorem \ref{thm  Hclass}}

\label{sec proof class}

\begin{proof}[Proof of Theorem \ref{thm equivalence}]
    The first claim of Theorem \ref{thm equivalence} follows immediately from basic linear algebra. The remaining claims follow from the first, the classification of modules in Theorem \ref{thm mod class} and the facts that clearly $\mathbf{M}_{\mathbf{H}_\mathbf{M}} \cong \mathbf{M}$ and $\mathbf{H}_{\mathbf{M}_\mathbf{H}} \cong \mathbf{H}$.
\end{proof}

Before proving Theorem \ref{thm Hclass}, we recall some necessary conditions for a datum $\mathbf{H}$ to be $\mathbf{p}$-bounded for some $\mathbf{p} < \infty$.
They were proven in \cite{SBLsurvey}, by adapting similar arguments for non-singular Brascamp-Lieb inequalities from \cite{BraLieb}.

\begin{lemma}
    \label{lem necessary}
    Let $\mathbf{p} < \infty$ and suppose that $\mathbf{H}$ is $\mathbf{p}$-bounded. Then for each $i = 1,2,3$,  
    \begin{equation}
        \label{eq SBL nec 1}
        \Pi_i \ker \Pi_0 = \Pi_i H\,.
    \end{equation}
    Furthermore, for each subspace $H' \subseteq \ker \Pi_0$, it holds that 
    \begin{equation}
    \label{eq SBL nec 2}
        \dim H' \le \sum_{i = 1}^3 \frac{\dim \Pi_i H'}{p_i}\,,
    \end{equation}
    and if $H' = \ker \Pi_0$, then we have equality in \eqref{eq SBL nec 2}.
\end{lemma}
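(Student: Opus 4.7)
The plan is to reduce to the classical (non-singular) Brascamp-Lieb inequality by testing the singular form against the Dirac mass $\delta$ at the origin of $H_0$, and then to invoke the known necessary conditions for that inequality from \cite{BraLieb}. Formally, integrating against $\delta(\Pi_0 x)$ collapses the $H$-integral to an integral over $\ker \Pi_0$, turning $\Lambda_\mathbf{H}$ into a non-singular Brascamp-Lieb form on $\ker \Pi_0$ with the restricted maps $\Pi_i|_{\ker \Pi_0}$.

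First I would verify that $\delta$ is an $l$-Calderón-Zygmund kernel for every $l$ (its Fourier transform is the constant $1$, so \eqref{czkernel} holds trivially). Since \eqref{eq holder} is stated for functions rather than distributions, I would then approximate $\delta$ by mollifiers $\phi_\epsilon(x) = \epsilon^{-d} \phi(\epsilon^{-1} x)$, $d = \dim H_0$, for a fixed non-negative Schwartz function $\phi \in \Sp(H_0)$ with $\int \phi = 1$. Using the identity $\partial^\alpha \widehat{\phi_\epsilon}(\xi) = \epsilon^{|\alpha|}(\partial^\alpha \widehat{\phi})(\epsilon \xi)$ and the Schwartz decay of $\widehat{\phi}$, a standard split at $|\xi| = \epsilon^{-1}$ shows that $C_l \phi_\epsilon$ is an $l$-Calderón-Zygmund kernel for a constant $C_l$ independent of $\epsilon$. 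The hypothesis \eqref{eq holder} therefore gives, uniformly in $\epsilon$,
\[
    |\Lambda_\mathbf{H}(\phi_\epsilon, f_1, f_2, f_3)| \le C \|f_1\|_{p_1} \|f_2\|_{p_2} \|f_3\|_{p_3}.
\]
For non-negative Schwartz $f_i$, integrating first along the fibers of $\Pi_0$ and then using dominated convergence shows that the left-hand side converges as $\epsilon \to 0$ to a positive constant multiple of
\[
    I(f_1, f_2, f_3) := \int_{\ker \Pi_0} f_1(\Pi_1 x) f_2(\Pi_2 x) f_3(\Pi_3 x) \, dx,
\]
so the classical Brascamp-Lieb inequality $I(f_1, f_2, f_3) \le C' \prod_i \|f_i\|_{p_i}$ holds on $\ker \Pi_0$ for the maps $B_i := \Pi_i|_{\ker \Pi_0}$ with exponents $c_i = 1/p_i$. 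By density this persists for arbitrary non-negative measurable $f_i$.

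Finally, I would apply the necessary conditions of \cite{BraLieb} to this non-singular inequality. They assert that (a) each $B_i$ must be surjective onto $\Pi_i H = H_i$, which is exactly \eqref{eq SBL nec 1}, and (b) for every subspace $H' \subseteq \ker \Pi_0$,
\[
    \dim H' \le \sum_{i=1}^{3} c_i \dim B_i H' = \sum_{i=1}^{3} \frac{\dim \Pi_i H'}{p_i},
\]
with equality for $H' = \ker \Pi_0$ (the scaling condition), which is exactly \eqref{eq SBL nec 2}. The only mildly technical step, and the place where I expect the main obstacle to lie, is establishing the Mikhlin-type bounds for the mollifiers $\phi_\epsilon$ uniformly in $\epsilon$ together with the clean passage to the limit; once that is done, the conclusion is a direct citation of the classical Brascamp-Lieb necessary conditions.
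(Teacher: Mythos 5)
Your proof is correct and takes essentially the same route as the paper: reduce to the nonsingular Brascamp--Lieb inequality on $\ker\Pi_0$ by taking the kernel to be (an approximation of) the Dirac $\delta$, then quote the necessary conditions from \cite{BraLieb}. The one place you overestimate the difficulty is the mollifier step: since the paper's definition of a Calderón--Zygmund kernel allows $K$ to be any tempered distribution with $|\partial^\alpha\widehat K(\xi)|\le|\xi|^{-|\alpha|}$, one may plug in $\delta$ directly (its Fourier transform is the constant $1$, so the Mikhlin bounds hold with no loss), and $\delta(\Pi_0 x)\,dx$ is already a well-defined measure supported on $\ker\Pi_0$ because $\Pi_0$ is surjective. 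Thus the "main obstacle" you flag is sidestepped entirely in the paper. A small attribution note: the paper obtains \eqref{eq SBL nec 2} from Theorem~1.13 of \cite{BraLieb}, but derives the surjectivity \eqref{eq SBL nec 1} as a separate elementary observation (a Brascamp--Lieb form can only be bounded on $L^{p_j}$ with $p_j<\infty$ if $\Pi_j|_{\ker\Pi_0}$ is surjective), rather than quoting it directly from \cite{BraLieb}; this is a cosmetic difference.
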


\begin{proof}
    Note that the Dirac $\delta$ distribution is a Calderón-Zygmund kernel. Hence, if $\mathbf{H}$ is $\mathbf{p}$-bounded, then the Brascamp-Lieb form 
    \[
        \int_{H} f_1(\Pi_1(x)) f_2(\Pi_2(x)) f_3(\Pi_3(x)) \delta(\Pi_0(x)) \, dx = c \int_{\ker \Pi_0} f_1(\Pi_1(x)) f_2(\Pi_2(x)) f_3(\Pi_3(x)) \, dx
    \]
    is bounded on $L^{p_1} \times L^{p_2} \times L^{p_3}$. Theorem 1.13 in \cite{BraLieb} then immediately gives \eqref{eq SBL nec 2}. The first condition \eqref{eq SBL nec 1} follows from the fact that if $p_j < \infty$, then the Brascamp-Lieb form can only be bounded on $L^{p_j}$ if $\Pi_i|_{\ker \Pi_0}$ is surjective.
\end{proof}

\begin{lemma}
    \label{lem nec hol}
    Suppose that the datum $\mathbf{H}$ is of Hölder type. Then for each $i = 1,2,3$, we have that $H = \ker \Pi_0 \oplus \ker \Pi_i$.
\end{lemma}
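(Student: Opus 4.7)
The goal is to show $\ker\Pi_0 \cap \ker\Pi_i = \{0\}$ and then conclude the direct sum decomposition by a dimension count. The surjectivity half of the decomposition will follow from Lemma \ref{lem necessary}, specifically condition \eqref{eq SBL nec 1}, which says $\Pi_i(\ker\Pi_0) = H_i$; this is where the hypothesis $\mathbf{p} < \infty$ enters. The trivial-intersection half will use \eqref{eq SBL nec 2} together with the Hölder relation $1/p_1 + 1/p_2 + 1/p_3 = 1$.

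The plan is as follows. First, set $W = \ker\Pi_0 \cap \ker\Pi_i$. Since $W \subseteq \ker\Pi_0$, the inequality \eqref{eq SBL nec 2} from Lemma \ref{lem necessary} applies and yields
\[
    \dim W \le \sum_{j=1}^3 \frac{\dim \Pi_j W}{p_j}.
\]
By construction $\Pi_i W = \{0\}$, so the $j = i$ term drops out. For the remaining two indices $j \neq i$, the map $\Pi_j$ restricted to $W$ has image of dimension at most $\dim W$, so $\dim \Pi_j W \le \dim W$. Substituting and using $\sum_{j \ne i} 1/p_j = 1 - 1/p_i$ gives
\[
    \dim W \le \dim W \cdot \Bigl(1 - \tfrac{1}{p_i}\Bigr),
\]
which forces $\dim W = 0$ since $p_i < \infty$. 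Hence $\ker\Pi_0 \cap \ker\Pi_i = \{0\}$.

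To finish, I would use rank–nullity twice. Because $\Pi_i$ is surjective, $\dim\ker\Pi_i = \dim H - \dim H_i$. By \eqref{eq SBL nec 1}, the restriction $\Pi_i|_{\ker\Pi_0} : \ker\Pi_0 \to H_i$ is surjective, and we have just shown it is injective, so $\dim\ker\Pi_0 = \dim H_i$. Adding these,
\[
    \dim\ker\Pi_0 + \dim\ker\Pi_i = \dim H_i + (\dim H - \dim H_i) = \dim H,
\]
which combined with the trivial intersection gives $H = \ker\Pi_0 \oplus \ker\Pi_i$.

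The only nontrivial input is the correct choice of test subspace $W = \ker\Pi_0 \cap \ker\Pi_i$ in Lemma \ref{lem necessary}; once that is made, the Hölder relation produces the required contradiction almost automatically, so I do not expect any serious obstacle. The argument does not use any structural information about the individual indecomposable summands appearing in the classification — only the general necessary conditions for $\mathbf{p}$-boundedness.
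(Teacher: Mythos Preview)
Your proof is correct and follows essentially the same approach as the paper: use the necessary conditions of Lemma~\ref{lem necessary} together with the H\"older relation to show $\ker\Pi_0 \cap \ker\Pi_i = \{0\}$, then finish with a dimension count via \eqref{eq SBL nec 1}. The only cosmetic difference is the choice of test subspace in \eqref{eq SBL nec 2}: you plug in $W = \ker\Pi_0 \cap \ker\Pi_i$ and use the inequality, whereas the paper plugs in $H' = \ker\Pi_0$, uses the equality case stated in Lemma~\ref{lem necessary}, and reads off injectivity of $\Pi_i|_{\ker\Pi_0}$ for all $i$ at once.
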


\begin{proof}
    By \eqref{eq SBL nec 2} we have 
    \[
        \dim \ker \Pi_0 = \sum_{i = 1}^3 \frac{\dim \Pi_i \ker \Pi_0}{p_i} \le \sum_{i = 1}^3 \frac{\dim \ker \Pi_0}{p_i} = \dim \ker \Pi_0\,.
    \]
    In the last step we used that $\frac{1}{p_1} + \frac{1}{p_2} + \frac{1}{p_3} = 1$. So we must have equality in the middle, hence $\Pi_{j}\vert_{\operatorname{ker}\Pi_{0}}$ is injective, which gives that $\ker \Pi_j \cap \ker \Pi_0 = \{0\}$. Combining this with \eqref{eq SBL nec 1}, we obtain
    \[
        \dim \ker \Pi_0 = \dim \Pi_i \ker \Pi_0 = \dim \Pi_i H = \dim H - \dim \ker \Pi_i\,,
    \]
    which completes the proof of the lemma.
\end{proof}

\begin{proof}[Proof of Theorem \ref{thm Hclass}]
    Let $\mathbf{p} < \infty$ and suppose that $\mathbf{H}$ is $\mathbf{p}$-bounded. To simplify some formulas, we will write below $q_i = p_i^{-1}$. By Theorem \ref{thm mod class}, we have that
    \[
        \mathbf{M}_\mathbf{H} = \mathbf{M}_1 \oplus \dotsb \oplus \mathbf{M}_k
    \]
    for some modules $\mathbf{M}_k$ from Table \ref{table 4sub}. By Theorem \ref{thm summand}, each $\mathbf{H}_j = \mathbf{H}_{\mathbf{M}_j}$, $j=1,\dotsc, k$, is $\mathbf{p}$-bounded. 

    Recall that Theorem \ref{thm mod class} allows for permutations of the subspaces in the modules in Table \ref{table 4sub}, see Lemma \ref{lem perm} for an exact description of which permutations of the subspace give rise to nonisomorphic modules. We will denote modules from Table \ref{table 4sub} by adding the permutation and the parameter $n$ as subscripts. 

    We write 
    \[
        \mathbf{H}_j = (H_{j}, H_{j0}, H_{j1}, H_{j2}, H_{j3}, \Pi_{j0}, \Pi_{j1}, \Pi_{j2}, \Pi_{j3})\,.
    \]
    By condition \eqref{eq SBL nec 1} of Lemma \ref{lem necessary} and surjectivity of the maps $\Pi_{ji}$, we have for $i=1,2,3$
    \begin{equation}
        \label{eq dim cond}
        \dim H_{ji} = \dim \Pi_{ji} H_j = \dim \Pi_{ji} \ker \Pi_{j0} \le \dim H_j - \dim H_{j0}\,.
    \end{equation}
    By comparing with Table \ref{table 4sub}, this immediately implies that $\mathbf{M}_j \not\cong \mathbf{IV}_{n, \pi}^*$ and $\mathbf{M}_j \not\cong \mathbf{V}_{n,\pi}^*$ for any $n$ or $\pi$. 

    Suppose next that $\mathbf{M}_j \cong \mathbf{I}_{n, \pi}$ for some $n \ge 1$ and some permutation $\pi$. If $H_{j0}$ corresponds to the second subspace in the block matrix in Table \ref{table 4sub}, then the map $\Pi_{j3}$ corresponding to the fourth subspace is not surjective on $\ker \Pi_0$. Similarly, if $H_{j0}$ corresponds to the fourth subspace, then the map corresponding to the second one is not surjective on $\ker \Pi_0$. Thus by \eqref{eq SBL nec 1} $H_{j0}$ must correspond to the first or third subspace, and by Lemma \ref{lem perm} we can assume that it corresponds to the first. By \eqref{eq SBL nec 2}, $\mathbf{p}$ must satisfy the Hölder condition $q_1 + q_2 + q_3 = 1$. By permuting the last three subspaces and using Lemma \ref{lem perm}, we obtain three isomorphism classes of modules, which are exactly $\mathbf{J}_n^{(1)}, \mathbf{J}_n^{(2)}, \mathbf{J}_n^{(3)}$ in Table \ref{table htype}.

    Suppose now that $\mathbf{M}_j \cong \mathbf{I\!I}_{n,\pi}$ for some $n \ge 0$ and some permutation $\pi$. By \eqref{eq dim cond}, the permutation $\pi$ must be so that $H_{j0}$ corresponds to a subspace of dimension $n$.
    We will assume by permuting the functions that $H_{j3}$ is the other subspace of dimension $n$. Applying \eqref{eq SBL nec 2} to the full space $H' = \ker \Pi_{j0}$ we obtain 
    \[
        (n+1)q_1 + (n+1)q_2 + nq_3 = n+1\,.
    \]
    On the other hand, applying \eqref{eq SBL nec 2} to the one dimensional space $H' = \ker \Pi_{j3} \cap \ker \Pi_{j0}$ yields
    \[
        q_1 + q_2 \ge 1\,.
    \]
    Since $p_3 < \infty$ and hence $q_3 > 0$, it follows that $n = 0$. So in this case, we must have $\mathbf{M}_j \cong \mathbf{P}^{(3)}$ and $\frac{1}{p_1} + \frac{1}{p_2} = 1$. Note that swapping the two nonzero subspaces in $\mathbf{P}^{(j)}$ gives an isomorphic module, see Lemma \ref{lem perm}. Permuting the functions yields the two additional possibilities $\mathbf{M}_j \cong \mathbf{P}^{(1)}$ or $\mathbf{M}_j \cong \mathbf{P}^{(2)}$, with the corresponding conditions on $\mathbf{p}$. 

    Next, assume that $\mathbf{M}_j \cong \mathbf{IV}_{n,\pi}$ for some $n \ge 0$ and $\pi$. Suppose first that $H_{j0}$ corresponds to one of the first three subspaces in Table \ref{table 4sub}. We permute the subspaces so that $\dim H_{j3} = n$.
    Then we get from \eqref{eq SBL nec 2} that 
    \[
        (n+1)q_1 + (n+1)q_2 + nq_3 = n+1\,.
    \]
    Taking $H' = \ker \Pi_{j0} \cap \ker \Pi_{j3}$, we also have
    \[
        q_1 + q_2 \ge 1\,.
    \]
    Since $p_3 < \infty$ it follows that $n = 0$ and hence $\mathbf{M}_j \cong \mathbf{K}^{(3)}$ and $\frac{1}{p_1} + \frac{1}{p_2}  =1$. Note that swapping the two nonzero subspaces in $\mathbf{P}^{(j)}$ gives an isomorphic module. Thus, permuting the subspaces, only yields the additional possibilities $\mathbf{M}_j \cong \mathbf{K}^{(1)}$ or $\mathbf{M}_j \cong \mathbf{K}^{(2)}$, with corresponding conditions on $\mathbf{p}$.
    It remains to consider the case where $H_{j0}$ is the last subspace in Table \ref{table 4sub}. \eqref{eq SBL nec 2} applied to $H' = \ker \Pi_{j0}$ gives in this case
    \[
        (n+1)(q_1 + q_2 + q_2) = n+2\,.
    \]
    On the other hand, applying \eqref{eq SBL nec 2}
    to each of the one dimensional subspaces $\ker \Pi_{j0} \cap \ker \Pi_{ji}$, $i = 1,2,3$, and adding the resulting inequalities, yields
    \[
        2(q_1 + q_2 + q_3) \ge 3\,.
    \]
    Hence $n = 0$ and $\frac{1}{p_1} + \frac{1}{p_2} + \frac{1}{p_3} = 2$ or $n = 1$ and $p_1 = p_2 = p_3= 2$. This corresponds to $\mathbf{M}_j \cong \mathbf{Y}$ and $\mathbf{M}_j \cong \mathbf{L}$, respectively. Note that all permutations of the subspaces in these modules corresponding to functions yield isomorphic modules.

    Finally assume that $\mathbf{M}_j \cong \mathbf{V}_{n,\pi}$ for some $n \ge 0$ and $\pi$. Note that all such modules for fixed $n$ and  different $\pi$ are isomorphic. Applying condition \eqref{eq SBL nec 2} to $H' = \ker\Pi_{j0}$, we obtain
    \begin{equation*}
        n(q_1 + q_2 + q_3) = n+1\,.
    \end{equation*}
    On the other hand, applying \eqref{eq SBL nec 2} to $H' = \ker \Pi_{j0} \cap \ker \Pi_{ji}$, $i=1,2,3$, and adding the resulting inequalities, yields
    \begin{equation*}
        2(q_1 + q_2 + q_3) \ge 3\,.
    \end{equation*}
    Hence, we must have either
    $n = 1$ and $\frac{1}{p_1} + \frac{1}{p_2} + \frac{1}{p_3} = 2$ or $n = 2$ and $p_1 = p_2 = p_3 = 2$.
    This corresponds to $\mathbf{M}_j \cong \mathbf{Z}$ and $\mathbf{M}_j \cong \mathbf{B}$, respectively.

    In the remaining cases, it follows immediately from \eqref{eq dim cond} that $\mathbf{H}_{j0}$ must correspond to the first subspace. All permutations respecting this give rise to isomorphic modules, or for modules of type $\mathbf{0}$ to another module of type $\mathbf{0}$.
    From \eqref{eq SBL nec 2} it then follows that $\frac{1}{p_1} + \frac{1}{p_2} + \frac{1}{p_3} = 1$. 
    Case $\mathbf{0}$ then corresponds to $\mathbf{N}$, case $\mathbf{I}$ gives after permuting the subspaces rise to the Jordan block cases $\mathbf{J}^{(s)}$, case $\mathbf{I\!I\!I}$ corresponds to $\mathbf{C}$ and case $\mathbf{I\!I\!I}^*$ to $\mathbf{T}$.

    Thus the possible choices of $\mathbf{p}$ are exactly as in case i) - iv) of Theorem \ref{thm Hclass}. Collecting the possible summands for each choice of $\mathbf{p}$ completes the proof.
\end{proof}

\begin{rem}
    \label{rem impossible}
    We now show that already the classification of $n$-linear singular Brascamp-Lieb forms of Hölder type is as hard as the classification of representations of the $n-1$-Kronecker quiver, i.e. of tuples of $n-1$ linear maps between two finite dimensional vector spaces, up to isomorphism. This classification problem is wild for $n > 3$, see for example Theorem 1 and 2 in \cite{Nazarova2}. Thus a classification as above is not possible for any $n > 3$, not even under the assumption that the forms are of Hölder type.
    
    Note that the necessary conditions from both Lemma \ref{lem necessary} and Lemma \ref{lem nec hol} continue to hold for more than three functions, with identical proofs.  
    Suppose the $\mathbf{H}$ is a datum of Hölder type.
    Let $a = \dim H_0$ and $b = \dim H_1$. By Lemma \ref{lem nec hol}, we have $\ker \Pi_0 \oplus \ker \Pi_1 = H$, so we can choose bases of $H$, $H_0$ and $H_1$  such that the matrices of $\Pi_0, \Pi_1$ are given by
    \[
        \begin{pmatrix}
            I_a \\ 0
        \end{pmatrix} \quad \text{and} \quad
        \begin{pmatrix}
            0 \\ I_b
        \end{pmatrix}\,.
    \]
    Choosing in $H_i$, $2 \le i \le n$, the basis $\Pi_i(e_{a+1}), \dotsc \Pi_i(e_{a+b})$, the matrices of $\Pi_i, 2 \le i \le n$ are given by
    \[
        \begin{pmatrix}
            A_i \\ I_b
        \end{pmatrix}\,.
    \]
    for certain $a \times b$ matrices $A_i$. Let two singular Brascamp Lieb data $\mathbf{H}, \mathbf{H}'$ be given, and assume that they can be transformed into the above normal form with matrices $A_i$ and $A_i'$, $2 \le i \le n$, respectively. Then $\mathbf{H}, \mathbf{H}'$ are equivalent if and only if there exists an invertible $a \times a$ matrix $P$ and an invertible $b \times b$ matrix $Q$ such that for all $i = 2, \dotsc, n$
    \begin{equation}
    \label{eq kronecker equivalence}
        QA_iP = A_i'\,.
    \end{equation}

    On the other hand, the datum $(A_2, \dotsc, A_n)$ determines $n-1$ linear maps from $\R^b \to \R^a$, so a representation of the $(n-1)$-Kronecker quiver. Two such representations are also isomorphic if and only if \eqref{eq kronecker equivalence} holds. Thus classifying $n$-linear singular Brascamp-Lieb forms of Hölder type is as hard as classifying representations of the $n-1$-Kronecker quiver.
\end{rem}

\section{Bounds for forms of non-H\"older type: Proof of Theorem \ref{thm i)ii)}}
\label{sec i)ii)iii)}

We go through the cases one by one. For simplicity we omit  the domain of integration from the notation. Here and in the following sections, we will find constants for various related
inequalities, and by abuse of notation we will denote each of them by the letter $C$. In particular, the meaning of $C$ may change from line to line.

\subsection{Case i} 
Suppose first that $\mathbf{H}$ is as in the case i), that is, \eqref{eq bil hol} holds for $i,j,k\in \{1,2,3\}$.  We choose coordinates $x_1 \in \R^{n_1}$,  $x_2, u \in \R^{n_2}$, $y_1 \in \R^{n_3}$, $y_2, v \in \R^{n_4}$.   
To prove bounds for
$\Lambda_{\mathbf{H}}(f_{1},f_{2},f_{3})$, it suffices after a change of variables to prove bounds for  
\begin{equation*}
    \int f_{j}(x_1,x_2)f_{k}(y_1,y_2)f_{i}(x_1,y_1,x_2+u,y_2+v)K(u,v)\,dx_1\,dy_1\,dx_2 \,dy_2\,du\,dv
\end{equation*}
\[ = \int f_{j}(x_1,x_2)f_{k}(y_1,y_2)(f_{i}*\tilde{K})(x_1,y_1,x_2,y_2)\,dx_1\,dy_1\,dx_2 \,dy_2\,, \]
where $\tilde{K}(u,v) = K(-u,-v)$ and the convolution is in the third and fourth argument only.
Since $p_j < \infty$, we have $p_i>1$. Thus we can further estimate, using Hölder's inequality for the exponents $p_j,p_i$ with    $\frac{1}{p_{j}}=1-\frac{1}{p_{i}}$  and a linear singular integral bound on $f_i$
\begin{equation*}
    \le C \int \left\|f_{j}(x_1,x_2)f_{k}(y_1,y_2)\right\|_{L^{p_{j}}_{x_2,y_2}} \|f_{i}(x_1,y_1,x_2,y_2)\|_{L^{p_{i}}_{x_2,y_2}} \,dx_1\,dy_1\,.
\end{equation*}
By H\"older's inequality and the condition $p_{j}=p_{k}$, this is bounded by
\begin{equation*}
    \leq C\|f_{1}\|_{p_{1}}\|f_{2}\|_{p_{2}} \|f_{3}\|_{{p_{3}}}\,,
\end{equation*}
which completes the proof. If $n_2 = n_4 = 0$, then the assumption $p_j < \infty$ is not needed, and the estimate follows just from  Hölder's inequality.

\subsection{Case ii}
Suppose next that $\mathbf{H}$ is in the case ii) with $p_{1},p_{2},p_{3}\neq 1$, so \eqref{eq young1} holds. Again we choose coordinates $x_1, y_1 \in \R^{n_1}$, $x_2, y_2, z_2 \in \R^{n_2}$ and write the form $\Lambda_{\mathbf{H}}(f_{1},f_{2},f_{3})$ after a change of variables up to a constant as 
\begin{equation*}
         \int f_{1}(x_1+y_1,x_2+y_2)f_{2}(x_1,x_2+z_2)f_{3}(y_1,y_2)K(z_2)\,dx_1\,dx_2\,dy_1\,dy_2\,dz_2\,.   
\end{equation*}
Using $*$ to denote convolution in the second argument only, we estimate this with Young's convolution inequality, and then a linear singular integral bound using that $p_3 > 1$, by
\begin{equation*}
    \leq \|f_{1}\|_{p_{1}}\|f_{2}\ast\tilde{K}\|_{p_{2}}\|f_{3}\|_{p_{3}} \leq C\|f_{1}\|_{p_{1}}\|f_{2}\|_{p_{2}}\|f_{3}\|_{p_{3}}\,.
\end{equation*}

Now suppose that there is some $i$ with $p_{i}=1$, so \eqref{eq young2} holds. We may assume  $i=1$, because the conditions on $\mathbf{p}$ are otherwise symmetric.  We choose coordinates $x_3 \in \R^{n_3}$, $x_4, z_4 \in \R^{n_4}$ and write the form $\Lambda_{\mathbf{H}}(f_{1},f_{2},f_{3})$ up to a constant and a change of variables as
\begin{equation*}
    \int f_{1}(x_{1}+y_{1},x_{2}+y_{2})f_{2}(x_{1},x_{2},x_{3},x_{4})f_{3}(y_{1},y_{2}+z_2,x_{3},x_{4}+z_4)K(z_2,z_4)\,dx\,dy\,dz\,.
\end{equation*}
We recognize a convolution in the second and fourth coordinate of $f_3$ with $\tilde{K}$. 
Applying Hölder's inequality in  $x_3, x_4$, using that $\frac{1}{p_2} + \frac{1}{p_3} = 1$, we  bound the last display by
\begin{equation*}
    \int |f_{1}(x_{1}+y_{1},x_{2}+y_{2})|\left\|f_{2}(x_{1},x_{2},x_{3},x_{4})\right\|_{L^{p_{2}}_{x_{3},x_{4}}}\|f_{3}\, \ast_{2,4}\, \tilde{K}(y_{1},y_{2},x_{3},x_{4})    \|_{L^{p_{3}}_{x_{3},x_{4}}}dx_{1}dx_{2}dy_{1}dy_{2}\,.
\end{equation*}
By Young's convolution inequality and then a linear singular integral bound, using that $p_2 < \infty$ and hence $p_3 > 1$, this is again bounded by $C\|f_{1}\|_{p_1}\|f_{2}\|_{{p_{2}}} \|f_{3}\|_{{p_{3}}}$. 

\subsection{Case iii}
Suppose finally that $\mathbf{H}$ is as in case iii), so \eqref{eq lw} holds. We use Fourier inversion to express $\Lambda_\mathbf{H}(f_1, f_2, f_3)$ in terms of the Fourier transforms $\widehat f_1$, $\widehat f_2$ and $\widehat f_3$. Then we apply the triangle inequality to move absolute values inside and estimate  $\widehat{K}$  by $1$ using \eqref{czkernel}. The resulting expression is a non-singular Brascamp-Lieb form $\widehat \Lambda$ in $\widehat f_1$, $\widehat f_2$ and $\widehat f_3$.
By Plancherel's theorem, the problem thus reduces to checking that this Brascamp-Lieb form is bounded at exponent $(p_{1},p_{2},p_{3})=(2,2,2)$. 
Transferring a Brascamp-Lieb form to the Fourier side in this way commutes with taking direct sums of the associated modules. 
By Lemma 4.8 in \cite{BraLieb}, a Brascamp-Lieb form is $\mathbf{p}$-bounded if each direct summand is $\mathbf{p}$-bounded. Thus it suffices to verify $(2,2,2)$-boundedness of each possible direct summand of $\widehat \Lambda$. 

The summand corresponding on the Fourier side to $\mathbf{L}$ is the Loomis-Whitney trilinear Brascamp-Lieb form, since we have 
\begin{equation*}
  \Lambda_\mathbf{L}(f_1, f_2, f_3) = \int f_{1}(x,u)f_{2}(y,v)f_{3}(x+v,y+u)K(u+v)\,dx\,dy\,du\,dv
\end{equation*}
\begin{equation*}
    = \int \widehat{f_{1}}(\xi_{1},\xi_{2}+\xi_{3})\widehat{f_{2}}(\xi_{2},\xi_{1}+\xi_{3})\widehat{f_{3}}(-\xi_{1},-\xi_{2})\widehat{K}(-\xi_{3})\,d\xi_{1}\,d\xi_{2}\,d\xi_{3}\,.
\end{equation*}
Estimating $\|\widehat{K}\|_\infty\leq 1$, changing variables $\xi_1+\xi_3+\xi_2=\tau$ and
shearing the functions $\widehat{f_1},\widehat{f_2}$,
this becomes exactly the Loomis-Whitney inequality trilinear form, which is then estimated by  
\[\|\widehat f_1\|_2\|\widehat f_2\|_2\|\widehat f_3\|_2 = \| f_1\|_2\| f_2\|_2\| f_3\|_2\,. \]

For the summands $\mathbf{B}$ we obtain similarly
\begin{equation*}
    \Lambda_\mathbf{B}(f_1, f_2, f_3) = \int f_{1}(x,y)f_{2}(x+z,y+u)f_{3}(x+v,z+u)K(u,v)\,dx\,dy\,dz\,du\,dv
\end{equation*}
\begin{equation*}
    = \int \widehat{f_{1}}(-\xi_{2}-\xi_{3},\xi_{1})\widehat{f_{2}}(\xi_{2},-\xi_{1})\widehat{f_{3}}(\xi_{3},-\xi_{2})\widehat{K}(\xi_{1}+\xi_{2},-\xi_{3})\,d\xi_{1}\,d\xi_{2}\,d\xi_{3}\,,
\end{equation*}
which after estimating  $\|\widehat{K}\|_\infty\leq 1$ and changing variables is again bounded by the Loomis-Whitney trilinear form of $\widehat f_1, \widehat f_2$ and $\widehat f_3$. For the summands $\mathbf{M}_i$ from Table \ref{table Holder} boundedness of the summands in $\widehat{\Lambda}$ reduces similarly to the Cauchy-Schwarz inequality.

\section{Proof of the projection theorems, Theorem \ref{thm summand} and Theorem \ref{thm n}}

To prove Theorem \ref{thm summand} and Theorem \ref{thm n}, we will need to extend Calderón-Zygmund kernels $K$ on some Hilbert space $H_0$ to kernels on a larger Hilbert space $H_0 \oplus H_0'$. The following lemma allows us to do that.

\begin{lemma}
    \label{lem check cz}
    Let $d, d' \ge 1$ and let $K$ be a Calderón-Zygmund kernel on $\R^d$. Define
    \[
        K'(x,y) =  |x|^{-d'} \exp\left(-\pi \frac{|y|^2}{|x|^2}\right) K(x)\,.
    \]
    For sufficiently small $c = c(d,d',l) > 0$,
    the kernel $c K'(x,y)$ is an $l$-Calderón-Zygmund kernel on $\R^{d+d'}$.
\end{lemma}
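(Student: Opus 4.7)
The first step is to compute $\widehat{K'}$ explicitly. Since $\int_{\R^{d'}} |x|^{-d'} e^{-\pi|y|^2/|x|^2} e^{-2\pi i\eta\cdot y}\,dy = e^{-\pi|x|^2|\eta|^2}$, performing the Fourier transform in $y$ first yields
\[
    \widehat{K'}(\xi,\eta) = \int_{\R^d} K(x)\, e^{-\pi|x|^2|\eta|^2}\, e^{-2\pi i\xi\cdot x}\,dx = (\widehat K * \phi_{|\eta|})(\xi),
\]
where $\phi_\lambda(u) = \lambda^{-d} e^{-\pi|u|^2/\lambda^2}$ is an $L^1$-normalised Gaussian on $\R^d$ at scale $\lambda$; in other words $\widehat{K'}(\cdot,\eta)$ is simply $\widehat K$ smoothed at scale $|\eta|$. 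Rotations of $\R^{d+d'}$ preserve the Mikhlin condition \eqref{czkernel} up to constants, so it suffices to verify $|\partial_\xi^\alpha \partial_\eta^\beta \widehat{K'}(\xi,\eta)| \le C\,|(\xi,\eta)|^{-|\alpha|-|\beta|}$ for $|\alpha|+|\beta|\le l$ in the standard product basis of $\R^d \oplus \R^{d'}$. I split the analysis into the two regimes $|\eta|\ge|\xi|$ and $|\xi|\ge|\eta|$.

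In the regime $|\eta|\ge|\xi|$, where $|(\xi,\eta)|\asymp|\eta|$, my plan is to put all derivatives onto the smooth Gaussian factor. Writing $\widehat{K'}(\xi,\eta) = \int \widehat K(v)\phi_{|\eta|}(\xi-v)\,dv$ and differentiating under the integral moves the $\xi$-derivatives onto $\phi_{|\eta|}$; the direct bound
\[
    \bigl|\partial_u^\alpha \partial_\eta^\beta \phi_{|\eta|}(u)\bigr| \le C|\eta|^{-d-|\alpha|-|\beta|} e^{-\pi|u|^2/(2|\eta|^2)},
\]
obtained by computing derivatives explicitly and absorbing polynomial factors in $u/|\eta|$ into a weaker Gaussian, combined with $|\widehat K|\le 1$, integrates to the desired $C|\eta|^{-|\alpha|-|\beta|}$ estimate.

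The regime $|\xi|\ge|\eta|$ is the main point of difficulty. Differentiating $\phi_{|\eta|}$ in $\eta$ naively produces a factor $|\eta|^{-|\beta|}$ that is too large when $|\eta|\ll|\xi|$, while transferring $\xi$-derivatives directly onto $\widehat K$ would run into distribution-theoretic subtleties at its singular point $0$. My plan is to keep all derivatives on $\phi_{|\eta|}$ and exploit the cancellation
\[
    \int_{\R^d} u^\gamma\, \partial_u^\alpha \partial_\eta^\beta \phi_{|\eta|}(u)\,du = 0, \qquad |\gamma| < |\alpha|+|\beta|,
\]
which follows by integrating by parts in $u$ together with the elementary moment identity $\int u^\delta \phi_{|\eta|}\,du = c_\delta |\eta|^{|\delta|}$ (zero for odd $|\delta|$ by parity, and a polynomial in $\eta$ of degree $|\delta|$ killed by $\partial_\eta^\beta$ whenever $|\delta|<|\beta|$). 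Subtracting from $\widehat K(v)$ its Taylor polynomial at $\xi$ of order $|\alpha|+|\beta|-1$ therefore leaves the integral unchanged, and it remains to estimate the Taylor remainder $R(\xi,v)$, which satisfies $|R(\xi,v)|\le C|v-\xi|^{|\alpha|+|\beta|}\sup_{s\in[0,1]}|\xi+s(v-\xi)|^{-|\alpha|-|\beta|}$. For $|v-\xi|\le|\xi|/2$ the segment from $\xi$ to $v$ stays away from $0$, giving $|R|\le C|v-\xi|^{|\alpha|+|\beta|}|\xi|^{-|\alpha|-|\beta|}$, which the Gaussian moment identity converts into a $C|\xi|^{-|\alpha|-|\beta|}$ contribution. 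For $|v-\xi|>|\xi|/2$ the Gaussian factor is bounded by $e^{-c|\xi|^2/|\eta|^2}$, and its super-polynomial decay in $\rho = |\xi|/|\eta|\ge 1$ absorbs the polynomial growth coming from $|\widehat K|\le 1$ and from the Taylor-polynomial terms (estimated via $|\partial^\gamma\widehat K(\xi)|\le|\xi|^{-|\gamma|}$, valid for $|\gamma|<|\alpha|+|\beta|\le l$). The main obstacle is precisely the bookkeeping in this second regime: the Taylor-subtraction argument enabled by the vanishing-moment identity is what reconciles the competing demands of avoiding the singularity of $\widehat K$ at $0$ and avoiding unwanted negative powers of $|\eta|$.
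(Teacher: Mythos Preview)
Your proof is correct and takes a genuinely different route from the paper. Both arguments start from the same convolution identity $\widehat{K'}(\xi,\eta)=(\widehat K*\phi_{|\eta|})(\xi)$ and split into the regimes $|\eta|\gtrsim|\xi|$ and $|\xi|\gtrsim|\eta|$, treating the first regime identically. In the second regime the paper introduces a spatial cutoff $\varphi$ near the origin in the integration variable, then uses the heat equation $4\pi\partial_t G_1=\Delta_\xi G_1$ (with $t=|\eta|^2$) to convert $\eta$-derivatives on the far piece into $\xi$-derivatives, which are then integrated by parts onto $(1-\varphi)\widehat K$; the near piece is handled by the exponential smallness of the heat kernel when $|\xi-u|\ge 1/2$ and $|\eta|$ is small. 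Your approach instead keeps all derivatives on the Gaussian and exploits the moment cancellation $\int u^\gamma\,\partial_u^\alpha\partial_\eta^\beta\phi_{|\eta|}(u)\,du=0$ for $|\gamma|<|\alpha|+|\beta|$ to perform a Taylor subtraction of $\widehat K$ at $\xi$, then estimates the remainder directly.

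The main payoff of your argument is efficiency in regularity: because the heat equation trades one $t$-derivative for a Laplacian, the paper's route consumes up to $|\beta|+2|\gamma|\le 2l$ derivatives of $\widehat K$, which is precisely why Theorems~\ref{thm summand} and~\ref{thm n} are stated with a loss from $l$- to $2l$-Calder\'on--Zygmund kernels. Your Taylor-remainder bound only calls on $|\partial^\gamma\widehat K|$ for $|\gamma|\le|\alpha|+|\beta|\le l$, so it proves the lemma with no loss of derivatives. The paper's heat-equation method, on the other hand, avoids the mild bookkeeping of tracking Taylor-polynomial contributions in the far region $|v-\xi|>|\xi|/2$, and makes the connection to heat-kernel smoothing more transparent.
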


\begin{proof}
    Using that the assumptions on $K$ are invariant under dilations,
    it suffices by scaling to show that for $|\xi| = 1$, $|\eta| \le 1$ and for $|\eta| = 1$, $|\xi| \le 1$ and all $|\alpha| \le l$, we have 
    \begin{equation*}
        |\partial^\alpha \widehat{K}'(\xi, \eta)| \le 1/c\,.
    \end{equation*}
    Denote the heat kernel by $\Phi(\xi,t) = t^{-d/2}\operatorname{exp}(-\pi|\xi|^{2}/t)$. By a direct computation, we find that 
    \begin{equation}
        \label{eq K' hat}
        \widehat K'(\xi, \eta) = \int_{\R^d} \widehat K(u) \frac{1}{|\eta|^d} \exp\left(-\pi\frac{|\xi - u|^2}{|\eta|^2}\right) \, du = \int_{\R^d} \widehat K(u) \Phi(\xi - u,|\eta|^2) \, du\,.
    \end{equation}
    
    First, suppose that $|\eta| = 1, |\xi| \le 1$. The derivatives of the heat kernel take the form 
    \begin{equation}\label{partailPhi}
\partial^{\alpha}\Phi(\xi-u,|\eta|^{2})=|\eta|^{-|\alpha|}p\left(\frac{\xi-u}{|\eta|}, \frac{\eta}{|\eta|} \right)\Phi(\xi-u,|\eta|^{2})\,,
    \end{equation}
    where $p$ is a polynomial of degree $2|\alpha|$.
    Therefore, using \eqref{czkernel}, it holds for $|\alpha| \le l$
    \begin{equation*}
        |\partial^\alpha \widehat{K}'(\xi, \eta)|\le C |\eta|^{-|\alpha|}\|\widehat{K}\|_{\infty}\int_{\R^d} (1 + \lvert u \rvert^{2m}) e^{-\pi |u|^{2}}du\le C\,.
    \end{equation*}
    
    Now suppose that $|\eta| \leq 1$ and $|\xi| = 1$. We split up the integral in \eqref{eq K' hat}. Pick a smooth function $\varphi$ on $H_0$ with $\mathbf{1}_{B(0,1/4)} \le \varphi \le \mathbf{1}_{B(0,1/2)}$. Then
    \begin{align*}
        \widehat K'(\xi, \eta) &= \int  (1 - \varphi(u))\widehat K(u) \Phi(\xi - u,|\eta|^2) \, du  + \int \varphi(u)\widehat K(u)\Phi(\xi - u,|\eta|^2) \, du \\
        &= G_1(\xi, |\eta|^2) + G_2(\xi, |\eta|^2)\,.
    \end{align*} 
    Note that the function $G_1(\xi, t)$ solves the heat equation
    \[
        4 \pi \partial_t G_1(\xi, t) = \Delta_\xi G_1(\xi, t)\,.
    \]
    Using this to replace all derivatives in the second argument of $G_1$ by derivatives in $\xi$, we obtain
    \[
        \partial_\xi^\beta \partial_\eta^\gamma (G_1(\xi, |\eta|^2)) = \sum_{j = 1}^{|\gamma|} p_j(\eta) \partial_\xi^\beta \Delta^j_\xi G_1(\xi, |\eta|^2)\,,
    \]
    for certain polynomials $p_j$ that depend only on $\gamma$. It follows that
    \begin{align*}
        |\partial_\xi^\beta \partial_\eta^\gamma G_1(\xi, |\eta|^2)| &= \left|\sum_{j = 1}^{|\gamma|} \int p_j(\eta) \left(\partial_\xi^\beta \Delta^j_\xi ((1 - \varphi(\xi - u))\widehat K(\xi - u)) \right)\Phi(u,|\eta|^2) \, du\right|\\
        &\le \sum_{j=1}^{|\gamma|} \sup_{|\eta| \le 1} |p_j(\eta)| \sup_{u} \left| \partial_u^\beta \Delta^j_u ((1 - \varphi( u))\widehat K(u))\right|\,.
    \end{align*}
    Since $1 - \varphi$ is smooth and supported on the complement of $B(0,1/4)$ and since $|\beta| + 2|\gamma| \le 2l$, it follows from the Mikhlin condition \eqref{czkernel} for the $2l$-Calderón-Zygmund kernel $K$ that this is bounded by a constant depending only on $l$.
    
    The derivatives of the second term $G_2$ are given by
    \[
        \partial^\alpha G_2(\xi, |\eta|^2) = \int \varphi(u)\widehat K(u) \partial^\alpha \Phi(\xi - u, |\eta|^2)  \, du\,.
    \]
    On the support of the integrand, we have $|u| \le 1/2$ and hence $1/2\le |\xi - u| \le 3/2$. Further, we have $|\eta| \leq 1$. Using \eqref{partailPhi}, we obtain
    \begin{align*}
        \left| \partial^{\alpha}\Phi(\xi-u,|\eta|^{2})\right|&\le C |\eta|^{-|\alpha|}p\left(\frac{\xi-u}{|\eta|}, \frac{\eta}{|\eta|}\right)\Phi(\xi-u,|\eta|^{2})\\
        &\le C |\eta|^{-d - |\alpha|} \left( \frac{1}{|\eta|}+1\right)^{2|\alpha|}\exp\left(-\frac{\pi}{|\eta|^{2}}\right) \le C\,.
    \end{align*}
    Hence, we have
    \begin{equation*}
        |\partial^\alpha G_2(\xi, \eta)|\le C\|\widehat{K}\|_{\infty}\|\varphi\|_{1} \le C\,,
    \end{equation*}
    which completes the proof of the lemma.
\end{proof}

Next, we note that in proving Theorem \ref{thm summand} and Theorem \ref{thm n}, we may restrict attention to bounded Calderón-Zygmund kernels. Indeed, every Calderón-Zygmund kernel $K$ is the weak limit as $R\to \infty$ of the bounded kernels $K_R$ defined by
\[
    \widehat{K_R}(\xi) =  \widehat K(\xi) (\varphi(R^{-1}\xi)- \varphi(R\xi))\,,
\]
for smooth $\varphi$ with $\mathbf{1}_{B(0,1)} \le \varphi \le \mathbf{1}_{B(0,2)}$. Letting $R \to \infty$ in the conclusion of either theorem with kernel $K_R$ yields the conclusion for the general kernel $K$.

\begin{proof}[Proof of Theorem \ref{thm summand}]
We fix a bounded $2l$-Calderón-Zygmund kernel $K$ on $H_0$.  By assumption, for all $l$-Calderón-Zygmund kernels $K'$ on $H_0 \oplus H_0'$ and all functions $F_1, F_2, F_3$
\begin{equation}
    \label{eq sum bound}
    |\Lambda_{\mathbf{H} \oplus \mathbf{H}'}(K', F_1, F_2, F_3)| \leq C \|F_1\|_{p_1}\|F_2\|_{p_2} \|F_3\|_{p_3}\,.
\end{equation}
Our goal is to show that there exists $C'$ such that for all $f_1, f_2, f_3$
\[
    |\Lambda_{\mathbf{H}}(K, f_1, f_2, f_3)| \leq C'\|f_1\|_{p_1}\|f_2\|_{p_2} \|f_3\|_{p_3}\,.
\]
We choose orthonormal bases of the spaces $H_0, H_0'$, so that the inner product becomes the standard inner product on $\R^n$, and we denote the corresponding norm by $|\cdot|$.

We will apply \eqref{eq sum bound} to the kernel $K'$ on $H_0 \oplus H'_0$ obtained by extending $K$ as in Lemma \ref{lem check cz}, thus 
\begin{equation*}
    K'(x,y) =  |x|^{-\dim H_0'} \exp\left(-\pi \frac{|y|^2}{|x|^2}\right) K(x)\, .
\end{equation*}
We pick for $i = 1,2,3$,  functions $F_i^N$ on $H_i \oplus H_i'$ defined by
\[
     F_i^N(x,y) = f_i(x) N^{-\frac{\dim H_i'}{p_i}} \exp\left(-\pi\frac{|y|^2}{N^2}\right)\,.
\]
Since the datum $\mathbf{H} \oplus \mathbf{H}'$ is by assumption $\mathbf{p}$-bounded, we can apply \eqref{eq SBL nec 1} and \eqref{eq SBL nec 2} to the subspaces $\ker \Pi_0 \subseteq \ker (\Pi_0 \oplus\Pi_0')$, $\ker \Pi_0' \subseteq \ker (\Pi_0 \oplus \Pi_0')$,  and $\ker (\Pi_0 \oplus \Pi_0')$,  to obtain respectively
\begin{equation}
    \label{eq exponents}
    \sum_{i=1}^3 \frac{\dim H_i'}{p_i} \ge \dim H' - \dim H_0'\,,
\end{equation}
\[
    \sum_{i=1}^3 \frac{\dim H_i}{p_i} \ge \dim H - \dim H_0
\]
and 
\[
    \sum_{i=1}^3 \frac{\dim H_i' + \dim H_i}{p_i} = \dim H' + \dim H - \dim H_0' - \dim H_0\,.
\]
So we must have equality in \eqref{eq exponents}. 
Evaluating $\Lambda_{\mathbf{H} \oplus \mathbf{H'}}$ with our choice of functions and kernel yields then
\[
    \Lambda_{\mathbf{H}\oplus \mathbf{H}'}(K', F_1^N,  F_2^N,  F_3^N) = \int  \prod_{i=1}^3 f_i(\Pi_i(x))  K(\Pi_0(x))  N^{-\dim H' + \dim H_0'} |\Pi_0(x)|^{-\dim H_0'}
\]
\begin{equation}
    \times\int_{H'} \exp\left(-\pi \frac{|\Pi_1'(y)|^2 + |\Pi_2'(y)|^2  + |\Pi_3'(y)|^2}{N^2} - \pi \frac{|\Pi_0'(y)|^2}{|\Pi_0(x)|^{2}}\right) \, dy \, dx\,. \label{eq gaussian}
\end{equation}
Denote by $A(x)$ the matrix
\[
    A(x) =  \frac{1}{|\Pi_0(x)|^2}{\Pi_0'}^t \Pi_0' + \frac{1}{N^2}({\Pi_1'}^t \Pi_1' + {\Pi_2'}^t \Pi_2' + {\Pi_3'}^t \Pi_3')  = \frac{1}{|\Pi_0(x)|^2} A_0 + \frac{1}{N^2} A_1\,.
\]
Then the Gaussian $y$-integral in \eqref{eq gaussian} evaluates to $\det(A(x))^{-1/2}$, so
\begin{equation}
    \label{eq integrand dom}
    \eqref{eq gaussian} = \int  \prod_{i=1}^3 f_i(\Pi_i(x)) K(\Pi_0(x)) N^{-\dim H' + \dim H_0'} |\Pi_0(x)|^{-\dim H_0'} \det(A(x))^{-1/2} \, dx\,.
\end{equation}
We claim that there exists constants $ c(\mathbf{H}'), C(\mathbf{H}')$ with
\begin{equation}
    \label{eq determinant}
    \det A(x) = c(\mathbf{H}') \cdot |\Pi_0(x)|^{-2 \dim H_0'} N^{-2 (\dim H' - \dim H_0')} \cdot (1 + O(N^{-2}|\Pi_0(x)|^2))
\end{equation}
and 
\begin{equation}
    \label{eq determinant upper}
    \det(A(x))^{-1/2} \le C(\mathbf{H}') \cdot |\Pi_0(x)|^{\dim H_0'} N^{(\dim H' - \dim H_0')}\,.
\end{equation}
Suppose \eqref{eq determinant} and \eqref{eq determinant upper} hold. Because $f_1, f_2$ and $f_3$ are Schwartz functions and $K$ is bounded, it follows that the integrand in \eqref{eq integrand dom} is uniformly in $N$ controlled by an integrable function of $x$.
Using \eqref{eq determinant} in \eqref{eq integrand dom} and sending $N \to \infty$, we obtain with the dominated convergence theorem 
\[
    \Lambda_{\mathbf{H}}(K, f_1, f_2, f_3) = c(\mathbf{H}') \lim_{N \to \infty} \Lambda_{\mathbf{H}\oplus \mathbf{H}'}(K', F_1^N,  F_2^N,  F_3^N)\,.
\]
With the boundedness assumption \eqref{eq sum bound} on $\Lambda_{\mathbf{H} \oplus \mathbf{H}'}$, it follows that there exist constants $C', C'' > 0$ such that
\[
   | \Lambda_{\mathbf{H}}(K, f_1, f_2, f_3) |
    \le C' \limsup_{N \to \infty}\|F_1^N\|_{p_1} \|F_2^N\|_{p_2} \|F_3^N\|_{p_3}= C''\|f_1\|_{p_1}\|f_2\|_{p_2} \|f_3\|_{p_3}\,.
\]
This completes the proof, up to verifying \eqref{eq determinant} and \eqref{eq determinant upper}.

To show \eqref{eq determinant}, we may assume by a base change that $A_0$ is a diagonal matrix, so that 
\[
    A(x) = \frac{1}{|\Pi_0(x)|^2} \begin{pmatrix}
        \lambda_1 & 0 & \dots & 0 & 0&\dots & 0\\
        0 & \lambda_2 & \dotsc & 0 & 0&\dots & 0\\
        \vdots & \vdots &\ddots & \vdots & \vdots & \dots & \vdots \\
        0 & 0 & \dots & \lambda_{\dim H_0'} &0  & \dots & 0\\
        0 & 0 & \dots &0 &0 &\dots & 0\\
        \vdots & \vdots & & \vdots & \vdots & \ddots &\vdots \\
        0 & 0 & \dots &0 & 0&\dots & 0
    \end{pmatrix} + \frac{1}{N^2}A_1\,,
\]
where $\lambda_1, \dotsc, \lambda_{\dim H_0'}$ are the nonzero eigenvalues of ${\Pi_0'}^t\Pi_0'$.
Then \eqref{eq determinant} follows by expanding $\det A(x)$ using the Leibniz formula: The diagonal in $A_0$ contributes the first term, while the contribution of all other terms is controlled by the $O(N^{-2} |\Pi_0(x)|^2)$ term. 

To show \eqref{eq determinant upper}, we can assume $N \le C|\Pi_0(x)|$ for a sufficiently large constant $C$, since otherwise it already follows from \eqref{eq determinant}. But then we have, since $A_0 \ge 0$ 
\begin{equation*}
    \det A(x) \ge \det(\frac{1}{N^2}A_1) = N^{-2 \dim H'} \det(A_1)
\end{equation*}
\begin{equation}
    \label{eq determinant upper 1}
    \ge C^{-2 \dim H_0'} |\Pi_0(x)|^{-2\dim H_0'} N^{-2(\dim H' - \dim H_0')} \det(A_1)\,.
\end{equation}
Note that $\det(A_1) > 0$ because $\ker \Pi_1' \cap  \ker \Pi_2' \cap \ker \Pi_3' = \{0\}$, which follows from \eqref{eq SBL nec 2}. Taking \eqref{eq determinant upper 1} to the power $-1/2$ then gives \eqref{eq determinant upper}.
\end{proof}

\begin{proof}[Proof of Theorem \ref{thm n}]

We proceed similarly as in the proof of Theorem \ref{thm summand}. We fix again a bounded $2l$-Calderón-Zygmund kernel $K$. 
We further fix $n$ and assume that there exists $C > 0$ such that for each $l$-Calderón-Zygmund kernel $K'$ 
\begin{equation*}
    |\Lambda_{\mathbf{H}_n}(K', F_1, F_2, F_3)| \leq C \|F_1\|_{p_1}\|F_2\|_{p_2} \|F_3\|_{p_3}\,.
\end{equation*}
Our goal is to show that there exists $C'$ such that for all $f_1, f_2, f_3$ 
\[
    |\Lambda_{\mathbf{H}_{n-1}}(K, f_1, f_2, f_3)| \leq C' \|f_1\|_{p_1}\|f_2\|_{p_2} \|f_3\|_{p_3}\,.
\]
Suppose that the singular Brascamp-Lieb datum $\mathbf{H}_{n-1}$ associated with $\mathbf{M}\oplus \mathbf{J}^{(2)}_{n-1}$ is
\[
    (H \oplus \R^{n-1} \oplus \R^{n-1}, H_0 \oplus \R^{n-1}, H_1 \oplus \R^{n-1}, H_2 \oplus \R^{n-1}\,, H_3 \oplus \R^{n-1},\Pi_0, \Pi_1, \Pi_2, \Pi_3)\,,
\]
for linear maps $\Pi_i$. Comparing the matrices associated with $\mathbf{J}^{(2)}_n$ and $\mathbf{J}^{(2)}_{n-1}$ in Table \ref{table htype} shows that the datum associated with $\mathbf{H}_n$ is then given by
\[
    (H \oplus \R^{n} \oplus \R^n, H_0 \oplus \R^{n}, H_1 \oplus \R^{n}, H_2 \oplus \R^{n}\,, H_3 \oplus \R^{n}, \Pi_0', \Pi_1', \Pi_2', \Pi_3')\,,
\]
where we have, writing $x\in H,\, (y, y_n)\in \R^{n-1}\times \R,\, (z, z_n) \in   \R^{n-1}\times \R$:
\[
    \Pi_0'(x,y, y_n , z, z_n) = (\Pi_0(x,  y,  z), y_n)\,,
\]
\[
    \Pi_1'(x,y, y_n , z, z_n) = (\Pi_1(x,  y,  z), z_n)\,,
\]
\[
    \Pi_2'(x, y, y_n ,  z, z_n) = (\Pi_2(x,  y, z), z_n+y_n)\,,
\]
\[
    \Pi_3'(x, y, y_n ,  z, z_n) = (\Pi_3(x,  y,  z), z_n + y_{n-1})\,.
\]
We define for $i = 1,2,3$
\[
    F_i(x, z, z_n) = N^{-1/p_i} \exp\left(-\pi \frac{ z_n^2}{N^2}\right) f_i(x, z)\,,
\]
and we set
\[
    K'(x,y, y_n) = |(x,y)|^{-1} \exp\left(- \pi \frac{y_n^2}{|(x, y)|^2}\right) K(x,y)\,.
\]
By Lemma \ref{lem check cz}, the kernel $K'$ is an $l$-Calderón-Zygmund kernel. We have that
\[
    \Lambda_{\mathbf{H}_n}(K', F_1, F_2, F_3) = \int_{H \oplus \R^{n-1} \oplus \R^{n-1}}  \prod_{i=1}^3 f_i(\Pi_i(x,y,z)) K(x,y)   
\]
\[
    \times  \int_\R \int_\R \frac{1}{|(x,y)| N} \exp\left(-\pi \frac{z_n^2 + (z_n + y_n)^2 + (z_n + y_{n-1})^2}{N^2} - \pi \frac{y_n^2}{|(x,y)|^2}\right)  \, dy_n \, dz_n \, dx \, dy \, dz\,.
\]
The $z_n$ integral can be evaluated by first expanding $(z_n+y_n)^2,\,(z_n+y_{n-1})^2$ and then completing the square in $z_n$. One obtains that the inner two integrals equal
\[
     \frac{1}{|(x,y)|}\frac{1}{\sqrt{3}} \int_\R   \exp\left(-\pi \frac{2(y_{n-1}^2 - y_{n-1}y_n + y_n^2)}{3N^2} - \pi \frac{y_n^2}{|(x,y)|^2}\right)  \, dy_n\,.
\]
This integral is bounded by, and converges by monotone convergence as $N \to \infty$, to 
\[
    \frac{1}{|(x,y)|}\frac{1}{\sqrt{3}} \int_\R  \exp\left(-\pi \frac{y_n^2}{|(x,y)|^2}\right) \, dy_n = \frac{1}{\sqrt{3}}\,.
\]
Using that $K$ is bounded and that $f_1, f_2, f_3$ are Schwartz functions, we obtain with the dominated convergence theorem
\[
    \Lambda_{\mathbf{H}_{n-1}}(K, f_1, f_2, f_3) = \lim_{N \to \infty} \sqrt{3} \Lambda_{\mathbf{H}_n}(K', F_1, F_2, F_3) \,.
\]
Combined with boundedness of $\Lambda_{\mathbf{H}_n}$ this shows that there exist constants $C', C'' > 0$ with
\[
    |\Lambda_{\mathbf{H}_{n-1}}(K, f_1, f_2, f_3)| \le C' \limsup_{N \to \infty} \|F_1\|_{p_1} \|F_2\|_{p_2} \|F_3\|_{p_3}= C'' \|f_1\|_{p_1} \|f_2\|_{p_2} \|f_3\|_{p_3}\,.\qedhere
\]
\end{proof}

\section{Method of rotations: Proof of Theorem \ref{thm rot}}
\label{sec rot}
Fix the dimension $d \ge 3$. We denote $S^{d-1} = \{\theta \in \R^d \, : \, |\theta| = 1\}$, and we denote by $\sigma$ the normalized $(d-1)$-dimensional Hausdorff probability measure on $S^{d-1}$.
Further, if $\nu \in S^{d-1}$, then we denote by 
$\sigma_{\nu}$ the normalized $(d-2)$-dimensional Hausdorff probability measure on the great circle
\[
    (\lin \nu)^{\bot} \cap S^{d-1}\,.
\]

Recall that there is an orthogonal decomposition
\[
    L^2(S^{d-1}) = \overline{\bigoplus_{n = 0}^\infty \mathcal{H}_n} \, ,
\]
where $\mathcal{H}_n$ is the space of spherical harmonics of degree $n$ on $S^{d-1}$, see e.g. \cite[Chapter IV]{SteinWeiss}. Another way to characterize $\mathcal{H}_n$ is as the space of eigenfunctions of the spherical Laplacian corresponding to the eigenvalue $\lambda_n = -n(n + d - 2)$. 

We will use the spherical Sobolev spaces $H^s(S^{d-1})$ defined by 
\[
\mathcal{H}^s(S^{d-1}) =\{f \in L^2(S^{d-1}) \, : \, \|f\|^2_{H^s(S^{d-1})} = \sum_{n = 0}^\infty \lambda_n^s \|\pi_{n}(f)\|_{L^2(S^{d-1})}^2 < \infty \}\,,
\]
where $\pi_{n}$ denotes the orthogonal projection onto $\mathcal{H}_n$. We will also use the Funk transform, which is the operator $T$ defined a priori on continuous functions $F$ on $S^{d-1}$ by
\begin{equation}
    \label{eq funk def}
     TF(\theta) = \int F(\nu) \, d\sigma_\theta(\nu)\,.
\end{equation}
We will need the following properties of the Funk transform. 
\begin{lemma}
    \label{lem Funk properties}
    Let $H_0^s(S^{d-1})$ be the space of functions in the smoothness $s$ Sobolev space on $S^{d-1}$ of mean zero.
    For all $s \ge 0$, the Funk-transform $T$ extends to a contraction 
    \[
        T\,:\, H_0^s(S^{d-1}) \to H_0^s(S^{d-1})\,, \qquad \|T\|_{H^s_0 \to H^s_0} = \frac{1}{d-1} < 1\,.
    \]
    Moreover, for all $s \ge 0$, the operator $T$ extends to a bounded operator 
    \[
        T\,:\, H^s_0(S^{d-1}) \to H_0^{s + \delta}(S^{d-1})\,,
    \]
    where  $\delta = \frac{d-2}{2}$.
\end{lemma}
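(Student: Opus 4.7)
The plan is to use the $O(d)$-equivariance of $T$ to diagonalize it on the spherical harmonic decomposition, and to read off both conclusions from the asymptotics of the resulting eigenvalues. Since $T$ commutes with the natural action of $O(d)$ on $S^{d-1}$, Schur's lemma implies that $T$ preserves every $\mathcal{H}_n$ and acts there by multiplication by a scalar $\mu_n \in \R$. Both claims then reduce to sharp information on the sequence $(\mu_n)_{n \ge 0}$.

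To identify $\mu_n$ I would test $T$ against a zonal harmonic. Fix a pole $e \in S^{d-1}$ and recall that the zonal harmonic of degree $n$ based at $e$ is $Z_n(\nu) = C_n^{(d-2)/2}(\nu \cdot e)$, where $C_n^\alpha$ denotes the Gegenbauer polynomial. Evaluating $T Z_n$ at $\theta = e$ restricts integration to the great circle where $\nu \cdot e = 0$, so
\[
(T Z_n)(e) = C_n^{(d-2)/2}(0), \qquad Z_n(e) = C_n^{(d-2)/2}(1),
\]
which together with $T Z_n = \mu_n Z_n$ gives
\[
\mu_n = \frac{C_n^{(d-2)/2}(0)}{C_n^{(d-2)/2}(1)}.
\]
In particular $\mu_n = 0$ for odd $n$ by the parity of the Gegenbauer polynomials; this is the reason only even harmonics matter, and it forces $\mu_1 = 0$.

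For the contraction claim, the spectral representation yields
\[
\|T\|_{H_0^s \to H_0^s} = \sup_{n \ge 1} |\mu_n| = \sup_{n \ge 2,\, n \text{ even}} |\mu_n|.
\]
Using the standard values $C_n^\alpha(0) = (-1)^{n/2}\Gamma(\alpha + n/2)/(\Gamma(\alpha)(n/2)!)$ and $C_n^\alpha(1) = \Gamma(n + 2\alpha)/(\Gamma(2\alpha) n!)$, a direct computation gives $\mu_2 = -1/(d-1)$ and the ratio identity
\[
\frac{|\mu_{n+2}|}{|\mu_n|} = \frac{n+1}{n+d-1} < 1 \qquad (d \ge 3),
\]
so the supremum is attained at $n = 2$ and equals $1/(d-1)$, as claimed.

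For the smoothing estimate, Stirling's formula applied to the explicit Gamma-function expression for $\mu_n$ yields $|\mu_n| \lesssim n^{-(d-2)/2}$ as even $n \to \infty$. Combined with $\lambda_n = n(n + d - 2) \asymp n^2$, this gives $\lambda_n^{\delta/2} |\mu_n| \lesssim 1$ with $\delta = (d-2)/2$, and the bound $\|TF\|_{H_0^{s+\delta}} \lesssim \|F\|_{H_0^s}$ follows term by term in the spherical harmonic expansion. The only delicate step is the Gegenbauer identity pinning down $\mu_n$, which is a classical instance of Funk--Hecke for the singular measure $d\sigma_\theta$; once it is in place, both conclusions are consequences of the spectral theorem plus Gamma-function asymptotics.
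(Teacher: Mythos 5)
Your proof is correct, and it tracks the paper's argument in its broad outline (diagonalize $T$ on spherical harmonics, read off the explicit eigenvalue formula, then use Gamma-function asymptotics), but two key steps are handled by genuinely different means, both of which are cleaner than what appears in the paper. First, to identify the eigenvalues: the paper invokes the Funk--Hecke formula (Lemma~\ref{lem FH}) and passes to the limit along a sequence of continuous functions $f_k$ approximating a delta at $t=0$, so as to apply Funk--Hecke to the singular measure $\sigma_\theta$. You instead observe that $O(d)$-equivariance plus irreducibility of $\mathcal{H}_n$ already forces $T$ to act by a scalar on each $\mathcal{H}_n$, and then pin down that scalar by evaluating $TZ_n$ against the zonal harmonic at its pole, which collapses the great-circle integral to $C_n^{(d-2)/2}(0)$ without any limiting argument. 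This bypasses the delta-approximation entirely. Second, to show that $\sup_{n\ge 1}|\mu_n|=|\mu_2|=1/(d-1)$: the paper appeals to logarithmic convexity of $\Gamma$ to argue that $|\lambda_{2n}|$ is decreasing, while you compute the explicit ratio $|\mu_{n+2}|/|\mu_n|=(n+1)/(n+d-1)<1$, which is both more elementary and more quantitative (it also recovers the Stirling asymptotic $|\mu_n|\lesssim n^{-(d-2)/2}$ by telescoping, though you correctly just cite Stirling directly). The only caveat is that the Schur's-lemma step silently uses that $\mathcal{H}_n$ is an $O(d)$-irreducible of real type so that the endomorphism algebra is $\R$; this is standard for $d\ge 3$ but worth stating.
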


The proof of Lemma \ref{lem Funk properties} relies on the Funk-Hecke formula.

\begin{lemma}[Funk-Hecke formula]
    \label{lem FH}
    Denote by $\omega_m$ the $m$-dimensional Hausdorff measure on $S^{m}$. Let $f: [-1,1] \to \mathbb{R}$ be a continuous function. Then for every spherical harmonic $Y_n$ of degree $n$ and $\theta\in S^{d-1}$, 
    \[
        \int_{S^{d-1}} f(\nu \cdot \theta) Y_n(\nu) \, d\sigma(\nu) = \frac{\omega_{d-2}}{\omega_{d-1}} \lambda_n Y_n(\theta)\,,
    \]
    where 
    \[
        \lambda_n =   \int_{-1}^1 \frac{C_n^{\frac{d-2}{2}}(t)}{C_n^{\frac{d-2}{2}}(1)} f(t)  (1 - t^2)^{\frac{d-3}{2}} \, dt\,.
    \]
    Here $C_n^k(t)$ denotes the Gegenbauer polynomials, defined via the generating function
    \begin{equation}
        \label{eq gen func}
        (1 - 2rt + r^2)^{-k} = \sum_{n \ge 0} C_n^k(t) r^n\,.
    \end{equation}
\end{lemma}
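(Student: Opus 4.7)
The plan is to prove the Funk-Hecke formula by the standard representation-theoretic reduction to a one-dimensional integral. Define the linear operator
\[
(T_f Y)(\theta) := \int_{S^{d-1}} f(\nu \cdot \theta)\, Y(\nu)\, d\sigma(\nu)\,.
\]
The first step is to verify that $T_f$ is $O(d)$-equivariant: for any rotation $R$, a change of variables $\nu \mapsto R\nu$ together with the rotation invariance of $\sigma$ and the identity $R\nu \cdot \theta = \nu \cdot R^{-1}\theta$ gives $T_f(Y \circ R) = (T_f Y) \circ R$. Consequently $T_f$ commutes with the Laplace-Beltrami operator on $S^{d-1}$ and therefore preserves each eigenspace $\mathcal{H}_n$. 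Since $\mathcal{H}_n$ is an irreducible $O(d)$-representation, Schur's lemma forces $T_f|_{\mathcal{H}_n}$ to be a scalar multiple of the identity; write this scalar $c_n = c_n(f)$.

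To identify $c_n$, I would test against a zonal spherical harmonic. Fix $\theta_0 \in S^{d-1}$ and set $Y_n(\nu) := C_n^{(d-2)/2}(\nu \cdot \theta_0)$. That this is a genuine element of $\mathcal{H}_n$ follows from the generating function \eqref{eq gen func}: with $x := r\theta_0$ one has
\[
(1 - 2r(\nu \cdot \theta_0) + r^2)^{-(d-2)/2} = |\nu - x|^{-(d-2)}\,,
\]
which (for $d \ge 3$) is harmonic in $x$ away from $\nu$. Expanding in powers of $r$, each Taylor coefficient is a homogeneous harmonic polynomial of degree $n$ in $x$, and by symmetry $\nu \mapsto C_n^{(d-2)/2}(\nu \cdot \theta_0)$ lies in $\mathcal{H}_n$.

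The scalar $c_n$ is then read off by evaluating $(T_f Y_n)(\theta_0)$ and dividing by $Y_n(\theta_0) = C_n^{(d-2)/2}(1)$. Parametrizing $\nu = (\sqrt{1-t^2}\,\omega, t)$ with $t \in [-1,1]$ and $\omega \in S^{d-2}$ in the coordinate system with $\theta_0$ as north pole, the disintegration
\[
d\sigma(\nu) = \frac{1}{\omega_{d-1}}(1-t^2)^{(d-3)/2}\, dt\, dH_{d-2}(\omega)
\]
(with $H_{d-2}$ the unnormalized Hausdorff measure on $S^{d-2}$) reduces $(T_f Y_n)(\theta_0)$ to a one-dimensional integral, since the integrand depends on $\nu$ only via $\nu \cdot \theta_0 = t$. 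Integrating out $\omega$ yields the factor $\omega_{d-2}$, and dividing by $C_n^{(d-2)/2}(1)$ gives $c_n = \tfrac{\omega_{d-2}}{\omega_{d-1}}\lambda_n$, which is precisely the claim.

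The only delicate ingredient is the identification of the zonal harmonic in $\mathcal{H}_n$ with the Gegenbauer polynomial via the generating function; once this is granted, everything else reduces to bookkeeping of normalizations.
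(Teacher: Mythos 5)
The paper does not prove this lemma; it simply cites \cite{FengYuan}, Theorem 1.2.9. Your argument is the standard textbook proof of the Funk--Hecke formula, and it is essentially correct: equivariance gives that $T_f$ preserves each $\mathcal{H}_n$, Schur's lemma then forces scalarity, and the scalar is read off from the zonal harmonic via the slice disintegration of the surface measure. The normalization bookkeeping (using that $\sigma$ is the normalized measure and $\omega_{d-1}, \omega_{d-2}$ are the unnormalized Hausdorff measures, so integrating out $\omega\in S^{d-2}$ produces $\omega_{d-2}/\omega_{d-1}$) is right, and dividing by $C_n^{(d-2)/2}(1)$ (nonzero for $d\ge3$, which is the case relevant to the paper) gives $\lambda_n$ as stated.

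Two points deserve a small amount of care, though neither is fatal. First, over $\mathbb{R}$ Schur's lemma only says $\operatorname{End}_{O(d)}(\mathcal{H}_n)$ is a division algebra, not that it equals $\mathbb{R}$; the quick fix is to observe that $T_f$ is self-adjoint on $L^2(S^{d-1})$ (its kernel $f(\nu\cdot\theta)$ is real and symmetric), so it has a real eigenvalue $\lambda$ in $\mathcal{H}_n$, and the $\lambda$-eigenspace is a nonzero invariant subspace, hence all of $\mathcal{H}_n$. Alternatively, one can bypass Schur entirely: $T_f Y_n$ is invariant under the stabilizer of $\theta_0$, hence again a zonal harmonic with pole $\theta_0$, hence a multiple of $Y_n$. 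Second, the phrase ``by symmetry $\nu\mapsto C_n^{(d-2)/2}(\nu\cdot\theta_0)$ lies in $\mathcal{H}_n$'' would benefit from being spelled out: writing $P_n(\nu,x)=|x|^n C_n^{(d-2)/2}(\nu\cdot x/|x|)$ for $|\nu|=1$, harmonicity of $G(\nu,x)=|\nu-x|^{2-d}$ in $x$ gives that $x\mapsto P_n(\nu,x)$ is harmonic; setting $\nu=\theta_0$ and renaming the variable shows $\nu\mapsto P_n(\theta_0,\nu)$ is harmonic, and its restriction to $S^{d-1}$ is exactly $C_n^{(d-2)/2}(\theta_0\cdot\nu)$. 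With these two small repairs the proof is complete.
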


\begin{proof}
    See for example \cite{FengYuan}, Theorem 1.2.9.
\end{proof}

\begin{proof}[Proof of Lemma \ref{lem Funk properties}]
    Let $(f_k)$ be a sequence of continuous functions such that $f_k$ is supported in $(-1/k, 1/k)$ and $\int_{-1/k}^{1/k} f_k(t) \, dt = 1$.
    A computation in coordinates shows that for every $\theta \in S^{d-1}$ 
    \[
        \sigma_\theta(\nu) = \frac{\omega_{d-1}}{\omega_{d-2}} \lim_{k \to \infty}  f_k(\nu \cdot \theta)\sigma(\nu)\,,
    \]
    in the sense of weak convergence of measures. Applying Lemma \ref{lem FH} to the sequence $f_k$ and taking limits, we obtain that 
    \[
        T Y_n = \lambda_n Y_n
    \]
    for every spherical harmonic $Y_n$ of degree $n$, where 
    \[
        \lambda_n = \frac{C_n^{\frac{d-2}{2}}(0)}{C_n^{\frac{d-2}{2}}(1)}\,.
    \]
    We compute the values of $C_n^{\frac{d-2}{2}}$ in $0$ and $1$ using \eqref{eq gen func}. Note the identity
    \begin{equation}
        \label{eq geometric}
        \frac{1}{(1 - x)^k} = \sum_{n = 0}^\infty \binom{n + k - 1}{k-1} x^n\,. 
    \end{equation}
    Combining \eqref{eq gen func} and \eqref{eq geometric}, we have 
    \[
        C_n^{\frac{d-2}{2}}(0) = 
        \begin{cases}
            (-1)^{n/2}\binom{\frac{n}{2} + \frac{d-2}{2} - 1}{\frac{d-2}{2} - 1} \quad &\text{if $n$ is even}\\
            0 \quad &\text{if $n$ is odd}\,,
        \end{cases}
    \]
    and 
    \[
        C_n^{\frac{d-2}{2}}(1) = \binom{n + d - 3}{d-3}\,.
    \]
    Hence $|\lambda_n|$ clearly vanishes for odd $n$. For even $n$ we obtain with the duplication formula $\Gamma(z)\Gamma(z + \frac{1}{2}) = \sqrt{\pi} 2^{1 - 2z}\Gamma(2z)$ and Stirling's formula
    \begin{equation}
        \label{eq lambdan}
        |\lambda_n| = \frac{\Gamma(\frac{n}{2} + \frac{d}{2} - 1)\Gamma(n+1)\Gamma(d-2)}{\Gamma(\frac{n}{2}+1)\Gamma(n + d - 2)\Gamma(\frac{d}{2}-1)} = 2^{3 - d} \frac{\Gamma(d-2)\Gamma(\frac{n+1}{2})}{\Gamma(\frac{d}{2}-1)\Gamma(\frac{n+d-1}{2})} = O(n^{\frac{2-d}{2}})\,.
    \end{equation}
    Thus $T$ maps $H^s_0$ into $H^{s + \delta}_0$, for $\delta = \frac{d-2}{2} > 0$. Equation \eqref{eq lambdan} combined with logarithmic convexity of the $\Gamma$-function also shows that $|\lambda_{2n}|$ is decreasing, so that 
    \[
        \|T\|_{H_0^s \to H_0^s} = |\lambda_2| =\frac{\Gamma(\frac{d}{2})\Gamma(3)\Gamma(d-2)}{\Gamma(2)\Gamma(d)\Gamma(\frac{d}{2}-1)} = \frac{1}{d-1}\,. 
    \]
\end{proof}

We define the manifold of all pairs of orthogonal vectors in $S^{d-1}$
\[
    \mathcal{M}_d = \{(\nu, \theta) \in S^{d-1} \times S^{d-1} \, : \, \theta \cdot \nu = 0\}\,.
\]
Below we will make use of the fact that the normalized Hausdorff measure on $\mathcal{M}_d$ disintegrates as
\begin{equation}
\label{eq desint}
    d\sigma_{\theta}(\nu) d\sigma(\theta) =  d\sigma_{\nu}(\theta) d\sigma(\nu)\,.
\end{equation}
Theorem \ref{thm rot} is a consequence of the following key proposition. 

\begin{proposition}
    \label{prop sphere}
    Let $d \ge 3$ and $s > 1/2$. There exists a constant $C > 0$ such that the following holds. Let $\Omega \in H_0^s(S^{d-1})$.
    Then there exists a function $\Gamma: \mathcal{M}_d \to \mathbb{C}$ such that
    \begin{itemize}
        \item for all $\nu \in S^{d-1}$
        \begin{equation}
            \label{eq mean 0}
            \int_{(\lin \nu)^{\bot} \cap S^{d-1}} \Gamma(\nu, \theta) \, d\sigma_{\nu}(\theta) = 0\,.
        \end{equation}
        and 
        \begin{equation}
            \label{eq Sobolev estimate}
            \|\Gamma(\nu, \cdot)\|_{H_0^{s-1/2}((\lin \nu)^{\bot} \cap S^{d-1})} \le C \|\Omega\|_{H_0^s(S^{d-1})}\,.
        \end{equation}
        \item as measures, we have
        \begin{equation}
            \label{eq repr}
            \Omega(\theta)\sigma(\theta) = \int_{S^{d-1}} \Gamma(\nu, \theta)\sigma_{\nu}(\theta) \, d\sigma(\nu)\,.
        \end{equation}
    \end{itemize}
    Moreover, $\Gamma$ can be chosen so that the mapping $\Omega \mapsto \Gamma$ is continuous from $C^k(S^{d-1})$ into $C^k(\mathcal{M}_d)$, for every $k$.
\end{proposition}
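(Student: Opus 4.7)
The plan is to reduce the two integral constraints \eqref{eq mean 0} and \eqref{eq repr} to a single scalar equation on $S^{d-1}$ via an additive ansatz, and then invert the resulting operator using the spectral gap of the Funk transform from Lemma \ref{lem Funk properties}. Using the disintegration formula \eqref{eq desint} to test \eqref{eq repr} against an arbitrary continuous function on $S^{d-1}$, one sees that \eqref{eq repr} is equivalent to the pointwise identity $\int_{(\lin \theta)^{\bot} \cap S^{d-1}} \Gamma(\nu, \theta) \, d\sigma_\theta(\nu) = \Omega(\theta)$. I will therefore look for $\Gamma$ of the form
\begin{equation*}
\Gamma(\nu, \theta) = \tilde\Omega(\theta) - T\tilde\Omega(\nu)
\end{equation*}
for an auxiliary $\tilde\Omega \in H^s_0(S^{d-1})$. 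Since the mean of $\tilde\Omega$ over $(\lin\nu)^{\bot} \cap S^{d-1}$ is precisely $T\tilde\Omega(\nu)$ by the definition \eqref{eq funk def} of $T$, condition \eqref{eq mean 0} is then automatic, while the representation condition collapses to the single operator equation $(I - T^2)\tilde\Omega = \Omega$.

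By Lemma \ref{lem Funk properties}, $\|T\|_{H^s_0 \to H^s_0} = (d-1)^{-1} < 1$, so $I - T^2$ is invertible on $H^s_0$ via the Neumann series $\sum_{n \ge 0} T^{2n}$. Setting $\tilde\Omega = (I-T^2)^{-1}\Omega$ then yields $\|\tilde\Omega\|_{H^s_0} \le C \|\Omega\|_{H^s_0}$. Since $s > 1/2$, the smoothing bound of Lemma \ref{lem Funk properties} combined with the Sobolev embedding $H^{s + (d-2)/2}(S^{d-1}) \hookrightarrow C^0(S^{d-1})$, which holds exactly when $s > 1/2$, ensures that $T\tilde\Omega$ has well-defined continuous pointwise values, so $\Gamma$ is defined everywhere on $\mathcal{M}_d$. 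The identity \eqref{eq repr} then follows by direct computation: combine the pointwise identity $(I - T^2)\tilde\Omega = \Omega$ with the symmetric disintegration \eqref{eq desint}.

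For the Sobolev estimate, the key observation is that $\Gamma(\nu, \cdot)$ is precisely the mean-zero part of the restriction of $\tilde\Omega$ to the great $(d-2)$-sphere $(\lin\nu)^{\bot} \cap S^{d-1}$, because the constant $T\tilde\Omega(\nu)$ subtracted off equals the mean of that restriction. Hence
\begin{equation*}
\|\Gamma(\nu, \cdot)\|_{H^{s-1/2}_0} \le \bigl\|\tilde\Omega|_{(\lin\nu)^{\bot} \cap S^{d-1}}\bigr\|_{H^{s-1/2}} \le C\|\tilde\Omega\|_{H^s(S^{d-1})} \le C' \|\Omega\|_{H^s(S^{d-1})},
\end{equation*}
where the second inequality is the spherical trace theorem for $s > 1/2$ (uniform in $\nu$ by rotation invariance), and the third uses the boundedness of $(I-T^2)^{-1}$ on $H^s_0$. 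For the continuity statement, $(I - T^2)^{-1}$ is bounded on every $H^s_0$ and $T$ is bounded $C^k \to C^k$, so a short bootstrap using the smoothing $T: H^s_0 \to H^{s + (d-2)/2}_0$ (whose gain is positive because $d \ge 3$) together with Sobolev embedding transfers $C^k$-regularity from $\Omega$ to $\tilde\Omega$, yielding continuous dependence of $\Gamma$ in $C^k(\mathcal{M}_d)$. The main technical input underpinning this whole argument is the uniform spherical trace estimate; once that and the strict contraction norm from Lemma \ref{lem Funk properties} are in hand, the additive ansatz and the Neumann series essentially force the construction, leaving only the identities to verify.
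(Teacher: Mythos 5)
Your proposal is correct and follows essentially the same route as the paper: the same ansatz $\Gamma(\nu,\theta) = F(\theta) - TF(\nu)$, the same reduction of the representation identity to $(I - T^2)F = \Omega$, the same Neumann-series inversion via the contraction bound of Lemma~\ref{lem Funk properties}, and the same trace-theorem argument for the Sobolev estimate and $C^k$-bootstrap for continuity. No substantive differences to note.
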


Proposition \ref{prop sphere} says that any mean zero function on $S^{d-1}$ can be decomposed into mean zero functions on slices $S^{d-1} \cap (\lin \nu)^{\bot}$. We will use this later to decompose Calderón-Zygmund kernels on $\R^d$ into kernels on $(\lin \nu)^{\bot}$, $\nu \in S^{d-1}$.

\begin{proof}[Proof of Proposition \ref{prop sphere}]
    For $F: S^{d-1} \to \mathbb{C}$, we define a candidate solution to  \eqref{eq mean 0}, \eqref{eq repr} by 
    \[
        \Gamma[F] \, : \, \mathcal{M}_d \to \mathbb{C}, \qquad \Gamma[F](\nu, \theta) = F(\theta) - \int F(\gamma) \, d\sigma_{\nu}(\gamma)\,.
    \]
    The function $\Gamma[F]$ satisfies \eqref{eq mean 0} by construction. On the other hand, it satisfies \eqref{eq repr} if and only if, as measures,
    \begin{equation*}
        \Omega(\theta) \sigma(\theta) =  \int \left[ F(\theta) - \int F(\gamma) \, d\sigma_{\nu}  (\gamma) \right]\sigma_{\nu}(\theta) \, d\sigma(\nu)\,.
    \end{equation*}
    Using \eqref{eq desint} and the definition \eqref{eq funk def} of $T$ to simplify the second summand, we obtain
    \[
        = F(\theta) \sigma(\theta) - \int TF(\nu) \, d\sigma_\theta(\nu) \, \sigma(\theta) = (F(\theta) - T^2 F(\theta)) \sigma(\theta)\,.
    \]
    Thus, \eqref{eq repr} holds if 
    \begin{equation}
        \label{eq solution}
        \Omega = (1 - T^2)F\,.
    \end{equation}
    Lemma \ref{lem Funk properties} now implies that $1-T^2$ is invertible on $H_0^s(S^{d-1})$ for all $s > 1/2$ and hence \eqref{eq solution} can be solved for $F$ for every $\Omega \in H^s_0$, and the solution map is continuous. The function $\Gamma(\nu, \cdot)$ is up to a constant the restriction of $F$ to the codimension one submanifold $S^{d-1} \cap (\lin \nu)^{\bot}$. Since $F \in H_0^s$, we obtain with the trace theorem \eqref{eq Sobolev estimate}.    
    Finally, we have for $\Omega \in C^k$ that 
    \[
        F = \sum_{l = 0}^\infty T^{2l} \Omega = \sum_{l = 0}^2 T^{2l} \Omega + \sum_{l = 3}^\infty T^{2l}\Omega\,.
    \]
    The first three terms on the right hand side are in $C^k$, since $T$ maps $C^k$ into $C^k$. By Lemma \ref{lem Funk properties} we have $T^{6} \Omega\in H^{k + d}$. Since $\|T\|_{H_0^{k+d}\to H_0^{k+d}} < 1$, the second sum converges in  $H^{k + d}$, which embedds into $C^k$. Thus the solution map is continuous on $C^k$.
\end{proof}

We will apply Proposition \ref{prop sphere} to the restriction of a homogeneous Calderón-Zygmund kernel to the sphere $S^{d-1}$. Since our defining assumptions \eqref{czkernel} on Calderón-Zygmund kernels are formulated on the Fourier side, we need the following lemma to pass to kernels with prescribed smoothness in space.

\begin{lemma}[{\cite[Chapter IV, Theorem 4.7]{SteinWeiss}}]
    \label{lem FT sphere}
    Let $s \ge d$. Let $\Omega \in H_0^s(S^{d-1})$ be a mean zero function on $S^{d-1}$. Then $m(\xi) = \Omega(\xi/|\xi|)$ defines a homogenous of degree $0$ tempered distribution on $\R^d$. The inverse Fourier transform of $m$ is a homogenous of degree $-d$ tempered distribution on $\R^d$ which can be written as
    \begin{equation*}
        \widecheck m(x) = \Omega^*(x/|x|) |x|^{-d}\,.
    \end{equation*}
    The mapping $\Omega \mapsto \Omega^*$ is bounded with bounded inverse from $H_0^s(S^{d-1})$ into $H_0^{s-d}(S^{d-1})$.
\end{lemma}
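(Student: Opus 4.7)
My plan is to expand $\Omega$ in spherical harmonics and apply the classical Bochner--Hecke formula component by component.

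Since $\Omega$ is mean zero, I would write $\Omega = \sum_{n\ge 1}\pi_n\Omega$, the sum starting at $n = 1$. Sobolev embedding on $S^{d-1}$ (which the hypothesis $s \ge d$ provides with room to spare) gives $\Omega \in L^\infty(S^{d-1})$, so $m(\xi) = \Omega(\xi/|\xi|)$ is a bounded, homogeneous degree-zero function on $\R^d \setminus \{0\}$ and defines a tempered distribution on $\R^d$.

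The essential ingredient is the Bochner--Hecke identity: for each spherical harmonic $Y_n \in \mathcal{H}_n$ with $n \ge 1$, the homogeneous degree $-d$ expression $|x|^{-d}Y_n(x/|x|)$ is a well-defined tempered distribution (the mean-zero property of $Y_n$ over $S^{d-1}$ removes the would-be logarithmic obstruction at the origin), and
\[
    \widehat{|x|^{-d}Y_n(x/|x|)}(\xi) = \gamma_n\, Y_n(\xi/|\xi|)\,, \qquad \gamma_n = i^{-n}\pi^{d/2}\,\frac{\Gamma(n/2)}{\Gamma((n+d)/2)}\,.
\]
This is a standard consequence of Hecke's identity $\widehat{P e^{-\pi|\cdot|^2}} = (-i)^{\deg P}P e^{-\pi|\cdot|^2}$ for solid harmonics $P$, combined with analytic continuation in the homogeneity degree. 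Stirling's formula yields the crucial asymptotic $|\gamma_n| \asymp n^{-d/2}$ as $n\to \infty$.

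I would then define $\Omega^* := \sum_{n\ge 1} \gamma_n^{-1}\,\pi_n \Omega$, which is automatically of mean zero. Summing the Bochner--Hecke identity termwise---justified by the rapid decay of $\|\pi_n\Omega\|_{L^2}$ that the $H^s$ hypothesis provides---yields $\widecheck m(x) = \Omega^*(x/|x|)|x|^{-d}$ as tempered distributions and establishes the decomposition formula. The Sobolev estimates then reduce to a weight-by-weight comparison of $|\gamma_n|^{\pm 1} \asymp n^{\mp d/2}$ against the paper's weights $\lambda_n^s \asymp n^{2s}$; this shows immediately that both $\Omega \mapsto \Omega^*$ and its inverse are bounded between the stated Sobolev spaces with constants depending only on $d$ and $s$.

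The main technical obstacle is the rigorous derivation of the Bochner--Hecke constants $\gamma_n$ together with the Stirling asymptotic, and the distributional justification of the termwise summation; both are entirely classical and are carried out in detail in Stein--Weiss, Chapter IV.
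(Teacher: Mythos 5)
Your route via spherical--harmonic expansion and the Bochner--Hecke identity is exactly the classical argument, and indeed the paper offers no proof here beyond citing Stein--Weiss Chapter IV, so you are supplying the detail behind the citation. Your derivation of the Bochner constant $\gamma_n = i^{-n}\pi^{d/2}\Gamma(n/2)/\Gamma((n+d)/2)$ and the Stirling asymptotic $|\gamma_n| \asymp n^{-d/2}$ are both correct (a sanity check at $n=1$ recovers the standard Riesz--transform kernel normalization). The definition $\Omega^* = \sum_{n\ge 1}\gamma_n^{-1}\pi_n\Omega$ and the distributional justification of termwise summation are also fine for $s \ge d$.

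The place that needs more care is your final sentence. With the paper's Sobolev weight $|\lambda_n|^s \asymp n^{2s}$ and the multiplier $|\gamma_n^{-1}| \asymp n^{d/2}$, the map $\Omega \mapsto \Omega^*$ is a Banach isomorphism $H^s_0(S^{d-1}) \to H^{s-d/2}_0(S^{d-1})$. This does give boundedness into $H^{s-d}_0$ (a weaker target), but it does \emph{not} give boundedness of the inverse from $H^{s-d}_0$ into $H^s_0$: the inverse multiplier $|\gamma_n| \asymp n^{-d/2}$ gains only $d/2$ derivatives, not $d$. So "both $\Omega\mapsto\Omega^*$ and its inverse are bounded between the stated Sobolev spaces" does not follow "immediately" from your asymptotic as written; what follows is the sharper two-sided bound with loss $d/2$. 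You should either state the conclusion as $H^s_0 \to H^{s-d/2}_0$ (which is stronger and suffices for the forward use), or make explicit which one-sided inequality the lemma's phrase is meant to encode. Since this factor-of-two discrepancy actually matters if the inverse bound is invoked on a lower-dimensional sphere, it is worth pinning down rather than absorbing into "this shows immediately."
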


\begin{proof}
    See Theorem 4.7 in Chapter IV of \cite{SteinWeiss}.
\end{proof}

\begin{proof}[Proof of Theorem \ref{thm rot}]
    Let $K$ be a homogenous $l$-Calderón-Zygmund kernel and let $\Omega: S^{d-1} \to \mathbb{C}$ be the function satisfying
    \[
        \widehat K(\xi) = \Omega(\xi/|\xi|) + C_0\,, \qquad \int \Omega(\theta) \, d\sigma(\theta) = 0\,.
    \]
    Since $K$ is an $l$-Calderón-Zygmund kernel, $\Omega \in C^l(S^{d-1})$, so in particular $\Omega \in H^l_0(S^{d-1})$. 
    By Lemma \ref{lem FT sphere}, the kernel $K$ is then given by
    \[
        K(x) = C_0 \delta + \Omega^*(x/|x|)|x|^{-d}\,,
    \]
    and $\Omega^*$ satisfies $\|\Omega^*\|_{H^{l-d}_0} \le C\|\Omega\|_{H^l_0}$. We apply Proposition \ref{prop sphere} to $\Omega^*$, using that $l \ge d+1$. We obtain for each $\nu \in S^{d-1}$ a function 
    \[
        \Omega_\nu^*(\theta) = \Gamma(\nu, \theta)
    \]
    such that 
    \begin{equation}
        \label{eq superposition 1}
        \Omega^*(\theta)\sigma(\theta) = \int_{S^{d-1}} \Omega_\nu^*(\theta) \sigma_\nu(\theta) \, d\sigma(\nu)\,,
    \end{equation}
    and such that 
    \[
        \|\Omega_\nu^*\|_{H^{l - d - 1/2}_0} \le C \|\Omega^*\|_{H^{l-d}_0} \le C \|\Omega\|_{H^l_0}\,.
    \]
    We define the kernel $K_\theta(x) = C_0\delta + \frac{\omega_{d-1}}{\omega_{d-2}} |x|^{1-d}\Omega_\theta^*(x/|x|)$ on $(\lin \theta)^{\bot}$. By applying Lemma \ref{lem FT sphere}, in the opposite direction, to the function $\Omega_\theta^*$, we find that for all $\xi \in (\lin \theta)^{\bot}$
    \[
        \widehat K_\theta(\xi) = C_0 + \frac{\omega_{d-1}}{\omega_{d-2}} \Omega_\theta(\xi/|\xi|),
    \]
    for a function $\Omega_\theta$ with
    \[
        \|\Omega_\theta\|_{H^{l - 3/2}_0(S^{d-1} \cap (\lin \theta)^{\bot}} \le C \|\Omega_0\|_{H^l_0(S^{d-1})}\,.
    \]
    Applying finally the Sobolev embedding theorem, we find that there exists a constant $C > 0$ such that for each $\theta \in S^{d-1}$, the kernel $C^{-1} K_\theta$ is an $l - \left\lceil\frac{d + 2}{2}\right\rceil$-Calderón-Zygmund kernel on $(\lin \theta)^{\bot}$.

    From \eqref{eq superposition 1}, we obtain for each Schwartz function $f$ on $\R^d$ by integration in polar coordinates:
    \begin{align}
        \int f(x) K(x) \, dx &= C_0 f(0) + \omega_{d-1} \int_0^\infty   \int_{S^{d-1}} f(r\nu) \Omega^*(\nu) \, d\sigma(\nu) \, \frac{dr}{r}\nonumber \\
        &= C_0 f(0) + \omega_{d-1} \int_0^\infty  \int_{S^{d-1}} \int_{S^{d-1}} f(r\nu) \Omega^*_\theta(\nu) \sigma_\theta(\nu) \, d\sigma(\theta) \, \frac{dr}{r}\nonumber \\
        &= \int_{S^{d-1}} C_0 f(0) + \omega_{d-1} \int_0^\infty  \int_{S^{d-1}} f(r\nu) \Omega^*_\theta(\nu) \sigma_\theta(\nu) \, \frac{dr}{r} \, d\sigma(\theta)\nonumber \\
        &= \int_{S^{d-1}} C_0 f(0) + \frac{\omega_{d-1}}{\omega_{d-2}} \int_{(\lin \theta)^{\bot}} f(x) \Omega_\theta^*(x/|x|) |x|^{1-d} \, d\mu_\theta(x) \, d\sigma(\theta)\,\nonumber\\
        &= \int_{S^{d-1}} \int_{(\lin \theta)^{\bot}} f(x) K_\theta(x) \, d\mu_\theta(x) \, d\sigma(\theta)\, .\label{eq superposition}
    \end{align}
    Here $\mu_\theta$ is the $d-1$-dimensional Lebesgue measure on $(\lin \theta)^{\bot}$. Combining \eqref{eq superposition} with Fubini's theorem, it follows that for all Schwartz functions $f_1, f_2, f_3$
    \[
        \Lambda_\mathbf{H}(K, f_1, f_2, f_3) = \int_{S^{d-1}} \Lambda_{\mathbf{H}(\theta)}(K_\theta, f_1, f_2, f_3) \, d\sigma(\theta)\,.
    \]
    Together with the triangle inequality and the assumption of integrable boundedness of the forms on the right hand side, this completes the proof.
\end{proof}

\section{Proof of Theorem \ref{thm type 03}}

We will deduce Theorem \ref{thm type 03} from the following multilinear multiplier bound from \cite{fraccaroli+2024}.  
Note that the condition \eqref{eq singular away subspace} on the multiplier is slightly more general than that obtained by translating the condition on the kernel of the corresponding singular Brascamp-Lieb forms. We will exploit this to deduce also bounds for forms with less singular multiplier. 

\begin{theorem}[{\cite[Theorem 1.1]{fraccaroli+2024}}]
    \label{thm nondegenerate}
     Let $2 < p_1, p_2, p_3 < \infty$ with $\frac{1}{p_1} + \frac{1}{p_2} + \frac{1}{p_3} = 1$ and fix a dimension $d\geq 1$. There exists $l\in \mathbb{N}$ such that the following holds. Let $\Gamma$ be the $2d$-dimensional subspace $\{(\xi_1, \xi_2, \xi_3) \in \R^{3d} \ : \ \xi_1 + \xi_2 + \xi_3 = 0\}$ of $\R^{3d}$. Furthermore, let $\Gamma' \subseteq \Gamma$ be a $d$-dimensional subspace, which can be parametrized in terms of each $\xi_1, \xi_2$ and $\xi_3$. There exists a constant $C$ such that the following holds. Let $M: \Gamma \to \mathbb{C}$ satisfy
    \begin{equation}
        \label{eq singular away subspace}
       |\partial^\alpha M(\xi)| \le (\dist(\xi, \Gamma'))^{-|\alpha|}\,, \qquad |\alpha| \le l\,.
    \end{equation}
    Then 
     \[
        \int_{\Gamma} \widehat f_1(\xi_1) \widehat f_2(\xi_2) \widehat f_3(\xi_3) M(\xi) \, d\mu_\Gamma(\xi) \le C\|f_1\|_{p_1} \|f_2\|_{p_2} \|f_3\|_{p_3}\,.
     \]
\end{theorem}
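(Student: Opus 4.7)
The plan is to prove this via time-frequency analysis in the style of Lacey and Thiele's treatment of the bilinear Hilbert transform, extended to accommodate a higher-dimensional singular subspace. The essential geometric input is the hypothesis that $\Gamma'$ can be parametrized by each of $\xi_1, \xi_2, \xi_3$; this is exactly the non-degeneracy condition ensuring that the three projections $\pi_i : \Gamma' \to \R^d$, $\xi \mapsto \xi_i$, are linear isomorphisms. Without it, the form would degenerate into a paraproduct-type object that is out of reach of these techniques.

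First I would perform a Whitney-type decomposition of $M$ at scales governed by $\dist(\cdot, \Gamma')$, writing $M = \sum_{k \in \Z} M_k$ with $M_k$ smooth and supported in the dyadic annulus $\{\xi \in \Gamma : 2^k \le \dist(\xi, \Gamma') \le 2^{k+2}\}$; by \eqref{eq singular away subspace} each $M_k$ behaves like a smooth symbol at frequency scale $2^k$. I would then expand each $M_k$ in a wave-packet basis associated to a lattice of \emph{tri-tiles}: triples $P = (P_1, P_2, P_3)$ of phase-space rectangles of unit volume, one attached to each $f_i$, whose frequency components are compatible with the constraint $\xi_1+\xi_2+\xi_3 = 0$ and whose sum of frequency centers lies near $\Gamma'$ at scale $2^k$. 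The non-degeneracy of the $\pi_i$ ensures a well-defined rank-$d$ tri-tile structure and almost-orthogonality of wave packets in each variable. This reduces the problem to bounding a discrete model sum
\[
    \sum_P a_P \prod_{i=1}^3 \langle f_i, \phi_{P_i} \rangle\,, \qquad |a_P| \leq C\,.
\]

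Next I would estimate this model sum via the Lacey-Thiele tree selection scheme, as generalized to $\R^d$ in \cite{RoosThesis} and refined in \cite{fraccaroli+2024}. The idea is to organize tri-tiles into trees (collections sharing a common top tile in one of the three indices), bound the contribution of each tree by a product of a \emph{size} (maximal-function type quantity) and an \emph{energy} ($L^2$- or square-function type quantity) of the input functions, and sum over trees using a John-Nirenberg style iteration that exploits disjointness of successively selected trees. In the range $2 < p_1,p_2,p_3 < \infty$ one can place two of the functions in $L^2$-based quantities (controlled by square functions) and the remaining one in an $L^\infty$-based quantity (controlled by the Hardy-Littlewood maximal function); permuting the role of the three functions and interpolating between the resulting estimates yields the full range claimed.

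The main obstacle is the verification of the tri-tile combinatorics and tree estimates in this higher-dimensional setting. In the classical one-dimensional bilinear Hilbert transform the tri-tiles have rank one and trees are nested time-frequency rectangles with essentially trivial incidence geometry. Here each $P_i$ is a $2d$-dimensional box, and the compatibility constraint among $P_1, P_2, P_3$ becomes a nontrivial rank-$d$ condition tied to the subspace $\Gamma'$. Setting up the corresponding BMO-type size decompositions, proving the required almost-orthogonality estimates, and carrying out the tile-counting arguments in this geometry is what occupies the bulk of \cite{fraccaroli+2024}; my proposal is to invoke those arguments as carried out there, with the only new observation being that any $d$-dimensional subspace $\Gamma' \subset \Gamma$ satisfying the stated parametrizability hypothesis admits such a tri-tile decomposition.
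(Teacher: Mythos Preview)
Your proposal is correct and aligned with the paper's treatment. The paper does not prove this theorem at all: it is stated as a citation of \cite[Theorem 1.1]{fraccaroli+2024}, with only a brief remark that it can alternatively be deduced by following the proof of Theorem 2.1.1 in \cite{RoosThesis} (via a reduction to a slight generalization of Lemma 2.3.1 there, where the bump functions $\varphi_{y,\eta,t}$ are merely adapted rather than dilates of a fixed bump). Your sketch of the Whitney decomposition, tri-tile wave-packet model, and Lacey--Thiele tree selection is an accurate outline of what those references do, and your final move---invoking the arguments of \cite{fraccaroli+2024} once the non-degeneracy of the projections $\pi_i:\Gamma'\to\R^d$ is noted---is exactly how the paper handles it as well.
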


Theorem \ref{thm nondegenerate} is a special case of the main result in \cite{fraccaroli+2024}.
It can also be deduced by following the proof of Theorem 2.1.1 in \cite{RoosThesis}:
    By a reduction similar to, but slightly more general than in Section 2.2 of \cite{RoosThesis}, Theorem \ref{thm nondegenerate} reduces to a slightly more general version of Lemma 2.3.1 in \cite{RoosThesis}. The only difference to \cite{RoosThesis} is that the bump functions $\varphi_{y,\eta,t}$ are possibly different bump functions adapted to position $y$, frequency $\eta$ and scale $t$, as opposed to dilation of a fixed bump function. This causes no issues, because the assumptions on $M$ still guarantee the uniform estimates on $\varphi_{y,\eta,t}$ required in the proof.

\begin{proof}[Proof of Theorem \ref{thm type 03}]
 Choosing coordinates and swapping the role of $f_1$ and $f_3$, we may  express the trilinear form $\Lambda_\mathbf{H}$ up to a constant as 
    \begin{equation}
        \label{eq type tf}
         \int  f_1(x_1 + y_1, x_2 + C_1^Ty_2) f_2(x_1 + B^Ty_1, x_2 + C_2^Ty_2) f_3(x_1,x_2)K(y_1, y_2) \, dy_1 \, dy_2 \, dx_1 \, dx_2\,.
    \end{equation}
    Here $B$ is the direct sum of the matrices $X$ in the modules $\mathbf{N}_{n_i}$ occuring as direct summands in $\mathbf{M}_\mathbf{H}$. In particular, and that is all we will need, $B$ and $I - B$ are invertible. The matrices $C_1$ and $C_2$ are direct sums of $I_{m_i}^\uparrow$ and $I_{m_i}^\downarrow$ respectively, from the direct summands $\mathbf{C}_{m_i}$. 
    
    By Fourier inversion, \eqref{eq type tf} equals a multiple of
    \[
        \int_\Gamma \widehat f_1(\xi_{1,1}, \xi_{1,2}) \widehat f_2(\xi_{2,1}, \xi_{2,2})  \widehat f_3(\xi_{3,1}, \xi_{3,2})  \widehat K(-\xi_{1,1} - B \xi_{2,1}, -C_1 \xi_{1,2} - C_2 \xi_{2,2}) \, d\mu_\Gamma(\xi)\,.
    \]
    Define the singular subspace
    \[
        \Gamma_s = \{\xi \in \Gamma \ : \ \xi_{1,1} +B\xi_{2,1} =0, \, C_1 \xi_{1,2} +C_2 \xi_{2,2}=0\}\,.
    \]
    Define $D_1$ and $D_2$ to be the direct sum of $Q_{m_i}I_{m_i}^\uparrow$ and $Q_{m_i}I_{m_i}^\downarrow$, respectively, where $Q_m$ is the $m \times (m+1)$ matrix
    \[
        Q_m = 
        \begin{pmatrix}
            0 & 1 & 0 & \dots & 0\\
            0 & 0 & 1 & &  \\
            \vdots & \vdots &  & \ddots \\
            -1 & 0 & 0 & \dots & 1
        \end{pmatrix}\,.
    \]
    Then, the space $\Gamma_s$ sits in the larger subspace 
    \[
        \Gamma' = \{\xi \in \Gamma \ : \ \xi_{1,1} +B\xi_{2,1}= 0, \, D_1 \xi_{1,2}+ D_2 \xi_{2,2} =0\}\,. 
    \]
    The space $\Gamma'$ satisfies the nondegeneracy condition of Theorem \ref{thm nondegenerate}, because $B$ and $I- B$ are invertible and because $D_1, D_2$ and $D_1 - D_2$ are invertible. Indeed, this can be checked blockwise. The blocks in $D_1$ are of the form $Q_{m_i}I_{m_i}^\uparrow$, the blocks in $D_2$ are of the form $Q_{m_i}I_{m_i}^\downarrow$, and the blocks in  $D_1 - D_2$ are of the form $Q_{m_i}I_{m_i}^\uparrow - Q_{m_i}I_{m_i}^\downarrow$, each of which are invertible. We choose
    \[
        M(\xi) = \widehat K(-\xi_{1,1} - B \xi_{2,1}, -C_1 \xi_{1,2} - C_2 \xi_{2,2})\,.
    \]
    Then we have for $|\alpha| \le m$, by \eqref{czkernel}
    \begin{align*}
        |\partial^\alpha M(\xi)| &\le C_m \sup_{|\beta| \le m} |\partial^\beta \widehat K(-\xi_{1,1} - B \xi_{2,1}, -C_1 \xi_{1,2} - C_2 \xi_{2,2})|\\
        &\le C_m (|\xi_{1,1} + B\xi_{2,1}| + |C_{1} \xi_{1,2} + C_2 \xi_{2,2}|)^{-|\alpha|}\\
        &\le C (\dist(\xi, \Gamma_s))^{-|\alpha|} \le C (\dist(\xi, \Gamma'))^{-|\alpha|} \,.
    \end{align*}
    Here the second to last line follows from the fact that both $\dist(\xi, \Gamma_s)$ and the expression
    \[
        |\xi_{1,1} + B\xi_{2,1}| + |C_{1} \xi_{1,2} + C_2 \xi_{2,2}|
    \]
    define norms on the finite dimensional quotient space $\Gamma / \Gamma_s$, and are hence comparable. 

    Theorem \ref{thm type 03} now follows from Theorem \ref{thm nondegenerate}.
\end{proof}

\section{Proof of Theorem \ref{thm 3 twisted}}
\label{sec twisted}
For $x\in (\R^n)^6$ we write  $x=(x^0,x^1)$,  where $x^0=(x_1^0,x_2^0,x_3^0)\in (\R^n)^3,\, x^1=(x_1^1,x_2^1,x_3^1)\in (\R^n)^3$, and  we write  $z=(z_1,z_2,z_3)\in (\R^n)^3$. 
 Reading off of Table \ref{table htype}, one finds that a singular Brascamp-Lieb datum  associated with  
\[\mathbf{M}=(J_1^{(1)}\oplus J_1^{(2)} \oplus J_1^{(3)} \oplus C_1)^{\oplus n}\]
is $\mathbf{H}$ with projections $\Pi_1,\Pi_2,\Pi_3:(\R^{n})^9\to (\R^{n})^4$ and $\Pi_0:(\R^{n})^9\to (\R^{n})^5$, given by
\[\Pi_1(x,z) = (x_1^0,x_2^0,x_3^0,z_3)\,,\]
\[\Pi_2(x,z) = (x_1^0+x_1^1, x_2^0+x_2^1, x_3^0,z_2+z_3)\,,\]
\[\Pi_3(x,z) = (x_1^0+x_1^1, x_2^0, x_3^0+x_3^1, z_1+z_3)\,,\]
\[\Pi_0(x,z) = (x_1^1, x_2^1, x_3^1, z_1,z_2)\,.\]
A datum with the projections $\pi_1,\pi_2,\pi_3:(\R^{n})^9\to (\R^{n})^4$ and $\pi_0:(\R^{n})^9\to (\R^{n})^5$, where
\[\pi_1(x,z) = (x_1^1,x_2^0,x_3^0,z_3)\,,\]
\[\pi_2(x,z) = (x_1^0, x_2^1, x_3^0, z_2)\,,\]
\[\pi_3(x,z) = (x_1^0, x_2^0, x_3^1, z_1)\,,\]
\[\pi_0(x,z) = (x^1-x^0, z_1-z_3,z_2-z_3)\,,\]
is equivalent to the datum $\mathbf{H}$. This can be seen by first performing the change of variables $(x^1,z_1,z_2)\to (x^1-x^0, z_1-z_3, z_2-z_3)$ in the singular Brascamp-Lieb form,     relabeling $(x_1^0,x_1^1)$ to $(x^1_1,x^0_1)$, and then replacing $K$ by its reflection in the first fiber $K(-\cdot,\cdot,\cdot, \cdot,\cdot)$. 
Therefore, to prove  Theorem \ref{thm 3 twisted}, it   suffices to show
\begin{equation*}
    \Big| \int_{(\R^{n})^9} \Big( \prod_{j=1}^3f_j(\pi_j(x,z)) \Big) K(\pi_0(x,z)) \, d(x,z) \Big| \leq C \prod_{j=1}^3 \|f_j\|_{p_j}
\end{equation*}
whenever $2<p_1,p_2,p_3<\infty$, $\frac{1}{p_1}+\frac{1}{p_2}+\frac{1}{p_3}=1$.

We begin with a cone decomposition of   $\widehat{K}$. 
For a  function $\phi$ on $\R^d$, $d\geq 1$, and $t>0$, let \[\phi_t(x) = t^{-d}\phi(t^{-1}x)\,.\]  
Let $B(0,R)$ denote the Euclidean open ball centered at $0$ with radius $R$.   Let $\psi:\R^n\to \R$ be a radial Schwartz function with
 \[\textup{supp}(\widehat\psi)\subseteq B(0,1)\setminus B(0,1/4)\,,\]
satisfying
$ \int_0^\infty \widehat{\psi}(t\xi)\frac{dt}{t}=1$
 for each $\xi\neq 0$. Let $\xi=(\xi_1,\ldots, \xi_5) \in (\R^{n})^5$
and write
 \begin{equation}
     \label{khat}
     \widehat{K}(\xi) = \widehat{K}(\xi) \prod_{j=1}^5 \int_0^\infty \widehat{\psi}(t_j\xi_j)\, \frac{dt_j}{t_j}\,.  
 \end{equation}
We rewrite this as a sum of integrals over five regions, depending on which parameter $t_i$ is the smallest. That is, we write
\[{K} =\sum_{i=1}^5{K_i}\,, \]
where  
\[\widehat{K_i}(\xi) = \int_{T_i}\widehat{K}(\xi) \Big( \prod_{j=1}^5 \widehat{\psi}(t_j\xi_j)t_j^{-1}\Big) \, d(t_1,\ldots, t_5)\,, \]
and
\[ T_i = \{(t_1,t_2,t_3,t_4,t_5)\in (0,\infty)^5: t_i\leq t_j \textup{ for }j\neq i\}\,.\]
We denote 
 \[\widehat{\varphi}(\eta) =  \int_{1}^\infty \widehat{\psi}(s\eta)  \,\frac{ds}{s}\,,\] 
which is
   a radial smooth function supported in 
$B(0,1)$ 
and  for $s_0>0$, it holds
 $\widehat{\varphi_{s_0}}(\eta) =\widehat{\varphi}(s_0\eta) = \int_{s_0}^\infty \widehat{\psi
}(s\eta)  \frac{ds}{s}$\,. 
On each $T_i$ we then integrate  over all larger parameters $t_j$, giving   
 \begin{equation*}
     \widehat{K_i}(\xi) =   \int_{0}^\infty \widehat{K}(\xi) \widehat{\psi}(t\xi_i)\Big( \prod_{j=1, j\neq i }^5 \widehat{\varphi}(t\xi_j) \Big )\,\frac{dt}{t}\,.
 \end{equation*}
We will proceed with additional decompositions of  the kernels $K_i$. By symmetry in \eqref{khat} and in  the projections $\pi_0, \pi_1,\pi_2,\pi_3$, it suffices to consider $i=1$ and $i=4$ only. 
 
We begin with $i=1$. Here we will decompose the kernel further into Gaussian functions, which will be convenient later in the proof. Let $g$ be the Gaussian $g(x)=e^{-\pi |x|^2}$, whose dimension should always be understood from the context.
  Then we  write 
        \[\widehat{K_1}(\xi) = \int_{0}^\infty  m_1(t\xi, t) |t\xi_1|^2 \widehat{g_{t}}(\xi_1,\xi_2,\xi_3)^2 \widehat{g_t}(\xi_4,\xi_5,\xi_4+\xi_5) \,\frac{dt}{t}\, ,\]
      where  
\[m_1(\xi,t) =    \widehat{K}(t^{-1}\xi)\widehat{\psi}(\xi_1) \Big(\prod_{j=2}^5\widehat{\varphi}(\xi_j) \Big)  ( |\xi_1|^2 g(\xi_1,\xi_2,\xi_3)^2 g(\xi_4,\xi_5,\xi_4+\xi_5))^{-1}\,. \]
Since $|\xi_1|$ is bounded away from zero by the support assumption on $\widehat{\psi}$, the  function $m_1(\xi, t)$ is smooth in the $\xi$ variable.  Moreover,  on the support of $m_1(\xi,t)$,  $\xi \in [-1,1]^{5n}$. 
Denote \[c_{1}(a,t) = (1+|a|)^{6n}  \widecheck{m_1}(a,t)\,,\]
where the inverse Fourier transform is taken in the $\xi$ variable.
A standard integration by parts argument, together with the symbol estimates \eqref{czkernel},  gives 
\begin{equation}
    \label{symest}
    |c_{1}(a,t)|\leq C_0 (1+|a|)^{-16n} 
\end{equation}
for an absolute constant $C_0$, provided $l\geq 22n$.  The lower bound on $l$ is  chosen crudely such that the decay of the coefficients $c_{1}(a,t)$  suffices in all of the arguments below.  We remark that we do not aim to optimize our arguments to minimize this bound.

Taking the inverse Fourier transform of $m$, we write
\begin{equation}
    \label{decomp}
 \widehat{K_1}(\xi)    =  \int_{(\R^{n})^5}  (1+|a|)^{-6n}\int_{0}^\infty c_{1}(a,t)\,  |t\xi_1|^2\widehat{g_{t}}(\xi_1,\xi_2,\xi_3)^2 \widehat{g_t}(\xi_4,\xi_5,\xi_4+\xi_5)e^{-2\pi i a\cdot t\xi}   \,  \frac{dt}{t} \, da \, .
 \end{equation}
 By Fourier inversion, 
\[K_1(\pi_0(x,z)) = \int_{(\R^n)^5} \widehat{K_1}(\xi) e^{2\pi i (\xi\cdot  \pi_0(x,z))}\, d\xi\,. \]
Using the definition of $\pi_0$ and \eqref{decomp}, we can therefore write $K_1(\pi_0(x,z))$
as a    superposition of the kernels of the form
\[\sum_{i=1}^n\int_{0}^\infty  \int_{(\R^{n})^5} c_{1}(a,t)   
\widehat{(\partial_ig)_{t}}(-\xi_1,-\xi_2,-\xi_3)
e^{2\pi i x^0\cdot (-\xi_1,-\xi_2,-\xi_3)}  
 \widehat{(\partial_ig)_{t}}(\xi_1,\xi_2,\xi_3)   \]
 \begin{equation}
     \label{K1dec}
     \times e^{2\pi i (x^1+t(a_1,a_2,a_3))\cdot (\xi_1,\xi_2,\xi_3)} \widehat{g_t}(\xi_4,\xi_5,-\xi_4-\xi_5) e^{2\pi i ((z_1+ta_4)\xi_4 + (z_2+ta_5)\xi_5 - z_3(\xi_4+\xi_5) )}\, 
d\xi \, \frac{dt}{t}\,,
 \end{equation}
weighted by $(1+|a|)^{-6n}$, where $a=(a_1,\ldots, a_5)$.
Here we have also used for convenience that $\partial_ig$ is odd and that $g$ is even,  and replaced $a$ by $-a$.  

Fixing $i$ and $t$, the integral in $\xi$ can be viewed as the integral of the function
\[(\eta_1\ldots,\eta_9) \mapsto  \widehat{(\partial_ig)_{t}}(\eta_1,\eta_2,\eta_3)
e^{2\pi i x^0\cdot (\eta_1,\eta_2,\eta_3)}  
 \widehat{(\partial_ig)_{t}}(\eta_4,\eta_5,\eta_6) e^{2\pi i (x^1+t(a_1,a_2,a_3))\cdot (\eta_4,\eta_5,\eta_6)}  \]
 \[\times \widehat{g_t}(\eta_7,\eta_8,\eta_9) e^{2\pi i (z_1+ta_4, z_2+ta_5, z_3)\cdot (\eta_7,\eta_8,\eta_9)} \]
over the five-dimensional subspace 
\[\{(\eta_1\ldots,\eta_9) \in (\R^n)^9: (\eta_1,\eta_2,\eta_3) = (-\eta_4,-\eta_5,-\eta_6), \eta_9=-\eta_7-\eta_8 \}\,.\]
It equals the integral of the inverse Fourier transform of this function over the orthogonal complement of this subspace, 
\[\{(r_1,\ldots, r_9)\in(\R^n)^9 :  (r_1,r_2,r_3)=(r_4,r_5,r_6), r_7=r_8=r_9\}\,.\]
Therefore,  the term for a fixed $i$ in  \eqref{K1dec} can be written, up to a constant, as 
\begin{equation}
    \label{eq:dec1}
    \int_{0}^\infty  \int_{(\R^{n})^4} c_{1}(a,t)  (\partial_i\partial_{3n+i} g)_{t}((x,z)+r^\star+(0,0,0,ta,0)) \, 
dr \, \frac{dt}{t}\,, 
\end{equation}
where $r=(r_1,r_2,r_3,r_4)$ and $r^\star=(r_1,r_2,r_3, r_1,r_2,r_3, r_4,r_4,r_4)$. 
Thus, it suffices to bound the form in which $K_1(\pi_0(x,z))$ is replaced by  \eqref{eq:dec1}, with estimates uniform  in $a$. Then  it remains to sum over $i$ and     integrate in $a$. By symmetry, it suffices to prove bounds for $i=1$.  This will be done in  Proposition \ref{prop:kernel1}. 

Next we decompose the kernel  $K_4$. We write it as $K_4=K_{6} + K_{7}$, where 
\[\widehat{K_{6}}(\xi) = \int_0^\infty \widehat{K}(\xi) \widehat{\varphi_t}(\xi_1)\widehat{\varphi_t}(\xi_2)\widehat{\varphi_t}(\xi_3)\widehat{\psi_t}(\xi_4)\widehat{{\varphi_{2^{10}t}}}(\xi_5)\, \frac{dt}{t} \,,\]
\[\widehat{K_{7}}(\xi) = \int_0^\infty \widehat{K}(\xi) \widehat{\varphi_t}(\xi_1)\widehat{\varphi_t}(\xi_2)\widehat{\varphi_t}(\xi_3)\widehat{\psi_t}(\xi_4)(\widehat{\varphi_t}-\widehat{{\varphi_{2^{10}t}}})(\xi_5)\, \frac{dt}{t}\,. \]
Note that if $\xi$ is  in the support of the integrand in $\widehat{K_{6}}$ for a fixed $t$, then   $2^{-3}<|t\xi_4+t\xi_5|<2^2$. On the other hand,   if $\xi$ is in the support of the integrand of $\widehat{K_{7}}$ for a fixed $t$,  then $0<|t\xi_4+t\xi_5| < 2^2$, but $|t\xi_5|\geq 2^{-12}$  
We will decompose these multiplier symbols further. To reduce the amount of notation we will use Gaussians for this decomposition as well, even though one could proceed with other Schwartz functions. 

Now we write
\[\widehat{K_{6}}(\xi) = \int_{(\R^n)^5} (1+|a|)^{-6n} \int_0^\infty c_{6}(a,t) |t\xi_4|^2\widehat{g_t}(\xi)|t\xi_4+t\xi_5|^2\widehat{g_t}(\xi_4+\xi_5) e^{-2\pi i a \cdot t\xi} \, \frac{dt}{t}\,  da\,,\]
where $c_{6}(a,t) =   (1+|a|)^{6n} \widecheck{m_{6}}(a,t)$ and 
\[m_{6}(\xi,t) =   \widehat{K}(t^{-1}\xi) \widehat{\varphi}(\xi_1)\widehat{\varphi}(\xi_2)\widehat{\varphi}(\xi_3)\widehat{\psi}(\xi_4)\widehat{{\varphi_{2^{10}}}}(\xi_5)   (|t\xi_4|^2\widehat{g_t}(\xi)|t\xi_4+t\xi_5|^2\widehat{g_t}(\xi_4+\xi_5))^{-1}\,. \]
Here, $c_{6}(a,t)$ satisfy the symbol estimate analogous to \eqref{symest}. 

Thus,   $K_{6}(\pi_0(x,z))$ can be written 
as a weighted  superposition of integrals of the form
\[\int_{0}^\infty  \int_{(\R^n)^5} c_{6}(a,t)   
\widehat{g_t}(\xi_1,\xi_2,\xi_3)  \widehat{(\Delta g)_t}(\xi_4)  \widehat{g_t}(\xi_5) \widehat{(\Delta g)_t}(\xi_4+\xi_5)  \]
\[\times e^{2\pi i (x^1-x^0,z_1-z_3,z_2-z_3)\cdot \xi}  e^{2\pi i ta\cdot \xi}  
\, d\xi \, \frac{dt}{t}\]
\[= \int_{0}^\infty  \int_{(\R^n)^5} c_{6}(a,t)   \widehat{g_t}(\xi_1,\xi_2,\xi_3)
 \widehat{(\Delta g)_t}(\xi_4) \widehat{g_t}(\xi_5) \widehat{(\Delta g)_t}(\xi_4+\xi_5)
e^{2\pi i (x^1-x^0+t(a_1,a_2,a_3))\cdot (\xi_1,\xi_2,\xi_3)}\]
\begin{equation}\label{eq:decK6}
\times e^{2\pi i (z_1+ta_4,z_2+ta_5,z_3)\cdot (\xi_4,\xi_5,-\xi_4-\xi_5)} 
\, d\xi \, \frac{dt}{t}\,.  
\end{equation}
Then, the integral in $\xi_4,\xi_5$ can be seen as the integral 
of  the function 
\[(\eta_1,\eta_2,\eta_3) \mapsto    \widehat{(\Delta g)_t}(\eta_1)\widehat{g_t}(\eta_2)  \widehat{(\Delta g)_t}(\eta_3) e^{2\pi i (z_1+ta_4,z_2+ta_5,z_3)\cdot (\eta_1,\eta_2,\eta_3)}  \]
 over the subspace \[\{(\eta_1,\eta_2,\eta_3) \in (\R^n)^3: \eta_3=-\eta_1-\eta_2\}\,.\] 
It equals the  integral of the Fourier transform of this function over the orthogonal complement
  \[\{(r_1,r_2,r_3) \in (\R^n)^3 : r_1=r_2=r_3\}\,.\] Using this and taking     the inverse Fourier transform in $\xi_1,\xi_2,\xi_3$,  the   display \eqref{eq:decK6} is up to a constant equal to 
\begin{equation*}
\int_{0}^\infty  \int_{\R^n} c_{6}(a,t)   
g_t(x^1-x^0+t(a_1,a_2,a_3))(\Delta g)_t(z_1+r+ta_4)g_t(z_2+r+ta_5)(\Delta g)_t(z_3+r)\, 
dr \, \frac{dt}{t} \,.
\end{equation*}
 Thus, it suffices to bound a form with $K_{6}(\pi_0(x,z))$ replaced by this kernel, with a constant uniform in $a$. This will follow from Proposition \ref{prop:kernel2}.

For the kernel $K_{7}$ we proceed with a similar decomposition  but with a factor $|t\xi_5|^2$ instead of $|t\xi_4+t\xi_5|^2$. This leads to bounding a form with $K_{7}(\pi_0(x,z)))$ replaced by
\begin{equation*}
    \int_{0}^\infty  \int_{\R^n} c_{7}(a,t)   
g_t(x^1-x^0+t(a_1,a_2,a_3))(\Delta g)_t(z_1+r+ta_4)(\Delta g)_t(z_2+r+ta_5)g_t(z_3+r)\, 
dr\,  \frac{dt}{t}\,.
\end{equation*}
with a constant uniform in $a$, where $c_{7}(a,t)$ satisfies a  bound analogous to  \eqref{symest}. 
Note a symmetry between the last two displays, which can be seen by interchanging $z_2$ and $z_3$, translating $r\to r-ta_5$, and replacing  $a_4-a_5$ by $a_4$ in the second display. 
Bounds  for this form   will also follow from Proposition \ref{prop:kernel2}.  

 To summarize, we have reduced Theorem \ref{thm 3 twisted} to the following two propositions. 
  \begin{proposition}
     \label{prop:kernel1} Let $n\geq 1$. 
     Let $2<p_1,p_2,p_3<\infty$ and  $\frac{1}{p_1}+\frac{1}{p_2}+\frac{1}{p_3}=1$. 
     There exists a constant $C>0$ such that for each   $a\in (\R^n)^9$,  $c(t)$ satisfying  $|c(t)|\leq 1$ for each $t>0$, and all Schwartz functions $f_1,f_2,f_3:(\R^n)^4\to \mathbb{C}$,
     \begin{equation*}
   \Big|  \int_{0}^\infty c(t) \int_{(\R^n)^{13}} \Big( \prod_{j=1}^3f_j(\pi_j(x,z)) \Big)      
 (\partial_1\partial_{3n+1} g)_{t}((x,z)+r^\star+ta) \, d(x,z,r)\, \frac{dt}{t}\Big | 
 \end{equation*}
 \begin{equation*}
     \leq C (1+|a|)^{8n} \prod_{j=1}^3 \|f_j\|_{p_j}\,,
 \end{equation*}
where $r=(r_1,r_2,r_3,r_4)$,  $r^\star=(r_1,r_2,r_3, r_1,r_2,r_3, r_4,r_4,r_4)$.
 \end{proposition}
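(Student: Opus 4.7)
The plan is to exploit the two derivative factors $(\partial_1 g)_t$ and $(\partial_{3n+1} g)_t$ in the kernel (corresponding to differentiation in the first coordinates of $x_1^0$ and $x_1^1$, respectively) to produce, via redistribution and Cauchy--Schwarz, two Littlewood--Paley square functions: one acting on $f_1$ (through its dependence on $x_1^1$), the other on the product $f_2 \cdot f_3$ (through their shared dependence on $x_1^0$). The Gaussian averagings in the remaining auxiliary variables will be controlled by strong maximal functions. The polynomial factor $(1+|a|)^{8n}$ in the statement accommodates the $ta$-translations in the kernel, which can be absorbed into translations of the $f_j$'s without changing their $L^{p_j}$-norms.

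Concretely, I would first perform the Gaussian convolutions in $r_2, r_3, r_4$, producing factors $G_{t\sqrt{2}}(x_j^1 - x_j^0)$ for $j = 2, 3$ and a two-dimensional Gaussian $D_t(z_1 - z_3, z_2 - z_3)$. The $r_1$-integration of the two derivative factors yields a proper second-order Littlewood--Paley kernel in $s_1 := x_1^1 - x_1^0$. Factoring this order-two kernel as $S_t \circ S_t$, where $S_t = (\partial_1 g)_t \ast_1 \cdot$ is the first-order Littlewood--Paley operator in the first variable (so that $\int_0^\infty |\widehat{S_t}(\xi_1)|^2 \, dt/t$ is a positive constant for $\xi_1 \neq 0$), and using that $S_t$ is anti-self-adjoint, I rewrite the scale-$t$ form (up to a sign) as $\Lambda_t = \int S_t f_1(y) \cdot S_t(M_t)(y) \, dy$, where $M_t(y)$ is the bilinear averaging object built from $f_2$, $f_3$ and the remaining Gaussians in $(s_2, s_3, w_1, w_2)$. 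Cauchy--Schwarz in $t$ pointwise in $y$, combined with H\"older in $y$ with exponents $p_1, p_1'$ (where $1/p_1' = 1/p_2 + 1/p_3$), yields
\[
   |\Lambda| \lesssim \Bigl\|\Bigl(\textstyle\int_0^\infty |S_t f_1|^2 \, \tfrac{dt}{t}\Bigr)^{1/2}\Bigr\|_{L^{p_1}} \cdot \Bigl\|\Bigl(\int_0^\infty |S_t M_t|^2 \, \tfrac{dt}{t}\Bigr)^{1/2}\Bigr\|_{L^{p_1'}}.
\]

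The first factor is bounded by $C \|f_1\|_{p_1}$ via the vector-valued Littlewood--Paley theorem in the first variable of $f_1$. The main obstacle is the bilinear square function factor, which involves the first-variable LP square function of a product of $f_2, f_3$ averaged over the auxiliary variables. I would bound it by combining the paraproduct pointwise estimate
\[
   |S_t(F_2 F_3)|^2 \lesssim |M_1 F_2|^2 \cdot |S_t F_3|^2 + |S_t F_2|^2 \cdot |M_1 F_3|^2 + |HH(F_2, F_3)|^2
\]
(where $M_1$ is the Hardy--Littlewood maximal in the first variable and $HH$ denotes the high--high paraproduct contribution) with strong maximal bounds for the Gaussian averagings in the $x_2$, $x_3$, $z$ variables. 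Applying H\"older with exponents $p_2, p_3$ and using the $L^{p_j/2}$-boundedness of the strong maximal operator on $|f_j|^2$, which requires $p_j > 2$ and is consistent with the hypothesis $2 < \mathbf{p} < \infty$, then yields the desired estimate $\bigl\|\bigl(\int |S_t M_t|^2 \, dt/t\bigr)^{1/2}\bigr\|_{L^{p_1'}} \lesssim \|f_2\|_{p_2} \|f_3\|_{p_3}$, completing the argument. The $(1+|a|)^{8n}$ factor absorbs the polynomial losses from the $ta$-shifts.
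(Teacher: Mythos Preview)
Your approach is genuinely different from the paper's, and I do not believe it closes as written. The paper proves this proposition via the \emph{twisted technique}: it introduces a fourth function $f_4\equiv 1$, localizes to convex trees, and proves a local estimate (Proposition~7.4) by two successive Cauchy--Schwarz steps interleaved with a telescoping identity and positivity arguments (Lemmas~7.6 and~7.7), then patches local to global with a stopping-time argument. Your first step (moving one derivative onto $f_1$ and one onto the $f_2 f_3$ block, then Cauchy--Schwarz in $t$) corresponds to the paper's first Cauchy--Schwarz, but after that the paper does \emph{not} attempt a direct square-function bound; instead it recognizes the resulting expression as a quadrilinear form of the type in \cite{DST22} and applies a second Cauchy--Schwarz plus telescoping.

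The concrete gap is your bound on the bilinear square function $\bigl\|\bigl(\int_0^\infty |S_t M_t|^2\,dt/t\bigr)^{1/2}\bigr\|_{p_1'}$. Two issues:
\begin{enumerate}
\item $M_t$ is not $F_2\cdot F_3$ for fixed functions; it is (schematically) $(A_t^{(2,4)}f_2)\cdot (A_t^{(3,4)}f_3)$, where the Gaussian averagings are at the \emph{same scale $t$} as $S_t$. The ``paraproduct pointwise estimate'' you invoke is for a product of $t$-independent functions; here the low--high and high--low pieces can indeed be salvaged via Fefferman--Stein, but the high--high term $S_t\bigl[\sum_{s\lesssim t}(S_s^{(1)}A_t^{(2,4)}f_2)(S_s^{(1)}A_t^{(3,4)}f_3)\bigr]$ has no usable decay in $t$, and $\int dt/t$ of the naive bound diverges.
\item More structurally, $(A_t^{(2,4)}f_2)(y)$ depends on $y_3$ \emph{directly} (since $f_2$ sees $x_3^0$), while $(A_t^{(3,4)}f_3)(y)$ depends on $y_2$ directly; this twisted entanglement through the output variables is exactly what prevents the separation you need after H\"older. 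This is the same obstruction that forced Kova\v{c} to invent the twisted technique for $\mathbf{J}_1^{(i)}\oplus\mathbf{J}_1^{(j)}$, and it persists here.
\end{enumerate}
Your $|HH(F_2,F_3)|^2$ term is left undefined, and any attempt to control it in this setting will, as far as I can see, lead you back to the telescoping/positivity machinery the paper uses.
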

 
 \begin{proposition}
 \label{prop:kernel2} Let $n\geq 1$. 
       Let $1<p_1,p_2,p_3<\infty$ and $\frac{1}{p_1}+\frac{1}{p_2}+\frac{1}{p_3}=1$. 
     There exists a constant $C>0$ such that for each   $a\in (\R^n)^6$,  $c(t)$ satisfying  $|c(t)|\leq 1$ for each $t>0$, and all Schwartz functions $f_1,f_2,f_3:(\R^n)^4\to \mathbb{C}$,
     \begin{equation*}
        \Big|  \int_{0}^\infty c(t) \int_{(\R^n)^{10}} \Big( \prod_{j=1}^3f_j(\pi_j(x,z)) \Big)    g_t(x^1-x^0+t(a_1,a_2,a_3))    {(\Delta g)_t}(z_1+r+ta_4)
     \end{equation*}
     \[\times g_t(z_2+r+ta_5)(\Delta g)_t(z_3+r+ta_6) \, d(x,z,r) \,\frac{dt}{t}\Big| \leq C (1+|a|)^{16n}\prod_{j=1}^3 \|f_j\|_{p_j}\,.\]
 \end{proposition}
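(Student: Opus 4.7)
The plan is to interpret this form as a Coifman--Meyer-type paraproduct. Performing the $(y, z)$-integrals first yields three factors $V_j(x^0, r; t)$, each a convolution of $f_j$ with a two-dimensional scale-$t$ Schwartz kernel: the two $\Delta g$ factors become Littlewood--Paley projections, while the single $g \otimes g$ factor is a (shifted) approximate identity. I will then combine these via the Cauchy--Schwarz inequality in the scale parameter $t$ followed by H\"older's inequality in the spatial variables.

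Concretely, after changing variables $y_i := x_i^1 - x_i^0$ and applying Fubini to the three disjoint $(y_i, z_j)$-pairs, with the tensor-product Schwartz kernels $\Phi := g \otimes \Delta g$ (of mean zero on $\R^{2n}$) and $\Psi := g \otimes g$, I obtain
\begin{align*}
V_1(x^0, r; t) &= (\Phi_t *_{(1,4)} f_1)(x_1^0 - ta_1, x_2^0, x_3^0, -r - ta_6),\\
V_2(x^0, r; t) &= (\Psi_t *_{(2,4)} f_2)(x_1^0, x_2^0 - ta_2, x_3^0, -r - ta_5),\\
V_3(x^0, r; t) &= (\Phi_t *_{(3,4)} f_3)(x_1^0, x_2^0, x_3^0 - ta_3, -r - ta_4),
\end{align*}
where $*_{(i, 4)}$ denotes convolution of $f_j$ in its $i$-th and fourth arguments. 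The form then equals $\Lambda = \int_0^\infty c(t) \int V_1 V_2 V_3 \, dx^0\, dr\, dt/t$. For each fixed $(x^0, r)$ I apply Cauchy--Schwarz in $t$ to pair $V_1$ and $V_3$, bound $V_2$ pointwise by its supremum in $t$, and use $|c(t)| \le 1$; then H\"older in $(x^0, r) \in (\R^n)^4$ at the given exponents (using $\sum_j 1/p_j = 1$) yields
\[|\Lambda| \le \|SV_1\|_{p_1} \|MV_2\|_{p_2} \|SV_3\|_{p_3},\]
where $SV_j(x^0, r) := (\int_0^\infty |V_j|^2\, dt/t)^{1/2}$ for $j = 1, 3$ and $MV_2(x^0, r) := \sup_t |V_2|$.

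Since $\Phi$ and $\Psi$ are Schwartz kernels on $\R^{2n}$ (the former with $\int \Phi = 0$), the associated unshifted square and maximal operators are $L^p(\R^{2n})$-bounded for $1 < p < \infty$ by classical Littlewood--Paley and Hardy--Littlewood theorems, and Fubini in the two passive coordinates extends this to $L^{p_j}((\R^n)^4)$. The translations by $tc$ at each scale $t$ leave the Fourier-multiplier magnitudes $|\widehat{\Phi}(t\xi)|, |\widehat{\Psi}(t\xi)|$ unchanged, so the shifted square function bounds are obtained without any $|a|$-loss, while the shifted maximal function produces at most a polynomial factor $(1+|a|)^{Cn}$ via the pointwise domination $|\Psi_t(x - tc)| \le C(1+|c|)^{2n}\,\Psi_t^*(x)$ coming from the rapid decay of $\Psi$. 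This is comfortably below the claimed loss $(1+|a|)^{16n}$. The main technical step is the careful bookkeeping of the $|a|$-dependence in these shifted operators; all analytic ingredients are classical, and the absence of any time--frequency analysis is consistent with the extended range $1 < \mathbf{p} < \infty$ granted by the proposition.
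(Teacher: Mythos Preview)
Your approach is essentially identical to the paper's: rewrite the form as a product of three fiber-wise convolutions, apply Cauchy--Schwarz/H\"older in $t$ with exponents $(2,\infty,2)$, then H\"older in the spatial variables, and conclude with standard square and maximal function bounds. One correction: your claim that the shifted square function has no $|a|$-loss because the Fourier-multiplier magnitudes are unchanged is only valid on $L^2$; for $p\neq 2$ the vector-valued Calder\'on--Zygmund argument (which the paper invokes for the weak $L^1$ endpoint) does incur a polynomial loss in the shift, but this is harmlessly absorbed by the $(1+|a|)^{16n}$ factor in the statement.
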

Note that in Proposition \ref{prop:kernel2}, one could decompose the Gaussian in $x^1-x^0$   into another integral of translated Gaussians. Also, the Laplacians could be split  into sums of second order derivatives. This would yield   a form that is, schematically,  similar to the one   in Proposition \ref{prop:kernel1}. However, we chose not to do this  as, it will not be needed for the proof. 
 
Proposition \ref{prop:kernel1} will be proven using  twisted techniques, while Proposition \ref{prop:kernel2} will follow from the classical square and maximal function bounds, similarly as in the case of the Coifman-Meyer multipliers.  
We will prove these   propositions in the following two sections.

\subsection{Proof of Proposition \ref{prop:kernel2}} Denote  $h=\Delta g$. 
Using the definition of the  projections $\pi_j$ and splitting the Gaussian into tensor products of three lower-dimensional Gaussians, the form we need to bound reads 
\[ \int_{0}^\infty c(t) \int_{(\R^n)^{10}} f_1(x_1^1,x_2^0,x_3^0,z_3)f_2(x_1^0, x_2^1, x_3^0, z_2) f_3(x_1^0, x_2^0, x_3^1, z_1)\]
\[
\times\Big( \prod_{j=1}^3g_t(x_j^1-x_j^0+ta_j) \Big ){h_t}(z_1+r+ta_4)g_t(z_2+r+ta_5)h_t(z_3+r+a_6) \, d(x,z,r) \, \frac{dt}{t} \,.\]
Integrating in $x_1$ and $z$, using  that Gaussians are even and replacing $a_1,a_2,a_3$ by $-a_1$, $-a_2$, $-a_3$, it suffices to estimate
\[ \Big| \int_{0}^\infty c(t) \int_{(\R^n)^4} (f_1*_{1,4}(g_{t,a_1}\otimes h_{t,a_6}))(x^0,r) (f_2*_{2,4}(g_{t,a_2}\otimes g_{t,a_5}))(x^0, r)\]
\[\times (f_3*_{3,4}(g_{t,a_3}\otimes h_{t,a_4}))(x^0, r)\, d(x^0,r) \, \frac{dt}{t} \Big|\,,\]
where the subscript $*_{m_1,m_2}$ means that we take $2n$-dimensional convolutions with the functions $f_j$ in the coordinates $m_1n,\ldots, m_1(n+1)$ and $m_2n,\ldots, m_2(n+1)$. Here,    all functions $g$ and $h$ that appear in the tensor products are $n$-dimensional,  we have denoted \[g_{t,a_j} = t^{-n}g_t(\cdot +ta_j)\,,\]
and analogously for $h_{t, a_j}$. For two functions $\phi,\rho$ we also write $(\phi\otimes \rho)(u,v)=\phi(u)\rho(v)$.
Applying H\"older's inequality in $t$ for the exponents $(2,\infty,2)$ and  using   $|c(t)|\leq 1$, we bound the last display by 
\[ \int_{(\R^n)^4}\Big(\int_0^\infty |(f_1*_{1,4}(g_{t,a_1}\otimes h_{t,a_6}))(x^0,r)|^2 \frac{dt}{t}\Big)^{1/2} \sup_{t>0}|(f_2*_{2,4}(g_{t,a_2}\otimes g_{t,a_5}))(x^0, r)| \]
\[\times \Big(\int_0^\infty |(f_3*_{3,4}(g_{t,a_3}\otimes h_{t,a_4}))(x^0, r)|^2 \frac{dt}{t}\Big)^{1/2} d(x^0,r)\,.\]
Applying H\"older's inequality in $(x^0,p)$, we estimate this further by   
\[  \Big\| \Big(\int_0^\infty |f_1*_{1,4}(g_{t,a_1}\otimes h_{t,a_6})|^2 \frac{dt}{t}\Big)^{1/2} \Big \|_{p_1} \Big \|\sup_{t>0}|f_2*_{2,4}(g_{t,a_2}\otimes g_{t,a_5})|\Big \|_{p_2}\]
\[\times  \Big\| \Big(\int_0^\infty  |f_3*_{3,4}(g_{t,a_3}\otimes h_{t,a_4})|^2 \frac{dt}{t}\Big)^{1/2}\Big\|_{p_3}  .\]
Using   bounds for the two-dimensional fiber-wise maximal and square functions, the last display is bounded by an absolute constant times 
\[ (1 + |a|)^{6n+2}\|f_1\|_{p_1}\|f_2\|_{p_2}\|f_3\|_{p_3}\,.\]
For the maximal function, a polynomial loss in the shift follows by dominating the Gaussian $g_{t,a_j}\leq 10(2(1 + |a_j|))^n g_{2t(1+|a_j|)}$. 
For the shifted square function, we have a uniform bound on $L^2$ in the shift $a$. We also have a weak $L^1$ bound  with polynomial loss $(1 + |a_j|)^{n+1}$, this follows from the standard proof of weak $L^1$ bounds of Calderón-Zygmund operators, but with an $L^2(\frac{dt}{t})$-vector valued kernel, as in \cite[Section 5.6.1]{grafakos2008classical}. Interpolation and duality then give the polynomial loss $(1+ |a_j|)^{n+1}$.

\subsection{Proof of Proposition \ref{prop:kernel1}} 
In contrast with the previous section, now we cannot bound the form by the maximal and square functions of each of the functions $f_j$ separately.  

To prove Proposition \ref{prop:kernel1},  we will first prove an estimate for a local version of our form. Local estimates  will then be combined into a global estimate using a stopping-time argument. 
 A finite collection $\T$ of dyadic cubes in $\R^{d}$, $d\geq 1$, is called a convex tree if there exists $Q_{\T}\in \T$ such that $Q\subseteq Q_{\T}$ for every $Q\in \T$ and if $Q,Q''\in \T$ and $Q\subseteq Q' \subseteq Q''$, then $Q'\in \T$. If $\ell(Q)$ denotes the side-length of a dyadic cube $Q$, we denote
\[\Omega_{\T} = \cup_{Q\in \T} Q\times (\ell(Q)/2,\ell(Q))\,.\]
For $f\in L^2_{\textup{loc}}(\R^d)$ we also define a variant of a maximal operator on a tree $\T$
\[Mf(\T) = \sup_{Q\in \T}\sup_{Q'\supseteq Q } \Big(\frac{1}{|Q'|} \int_{Q'} |f|^2 \Big)^{1/2}, \]
where the second supremum is over all cubes $Q'$ with sides parallel to the coordinate axes, which contain  the   cube $Q$.

Let $c(t)$ and $\pi_1,\pi_2,\pi_3$  be as in Proposition \ref{prop:kernel1}. Let 
$\pi_4:(\R^{n})^9\to (\R^{n})^4$ be  given by
\[\pi_4(x,z) = (x_1^1,x_2^1,x_3^1,z_2)\,.\]
We   will prove bounds for a more symmetric  local quadrilinear form 
\[\Lambda_{\T,a}(f_1,f_2,f_3,f_4)= (1+|a|)^{-16n}
  \int_{\Omega_{\T}}  {c(t)}  \int_{(\mathbb{R}^{n})^9}\Big( \prod_{j=1}^4f_j(\pi_j(x,z)) \Big ) \]
  \[\times  (\partial_1\partial_{3n+1} g)_{t}((x,z)+r^\star+ta)\, 
d(x,z)\, dr\, \frac{dt}{t}\, , \]
defined for bounded functions $f_j$ on $\R^{4n}$   and a convex tree $\T$ in $\R^{4n}$.
The main step in the proof of Proposition \ref{prop:kernel1} is the following estimate, which   will be  applied  with $f_4=1$.
\begin{proposition}\label{prop:localabc}
 There exists a constant $C>0$ such that for any convex tree $\T$, any $a \in (\R^n)^9$, and   bounded  functions  $f_1,f_2,f_3,f_4: (\R^n)^4 \to \mathbb{C}$, 
 \[|\Lambda_{\T,a}(f_1,f_2,f_3,f_4)| \leq C |Q_{\T}| \prod_{i=1}^4    Mf_i(\T) \,. \]
\end{proposition}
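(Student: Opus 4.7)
The plan is to adapt the Durcik-Kovač-Thiele intertwined Cauchy-Schwarz and integration-by-parts scheme for twisted simplex-type forms, executed in the localized setting of the convex tree $\T$. Reading off the projections, each of the three coordinate directions $x_i$ ($i=1,2,3$) has $x_i^0$ appearing in the two functions complementary to $\{f_i,f_4\}$, while $x_i^1$ appears in $f_i$ and $f_4$. After integrating the $r$-variables, the Gaussian becomes concentrated on the diagonal differences $x_i^1-x_i^0$ and $z_j-z_k$ (that is, on $\pi_0$), and the derivative factor $\partial_1\partial_{3n+1}$ couples the two functions depending on $x_1^0$ with the two depending on $x_1^1$ through the heat kernel.

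The first step is to apply Cauchy-Schwarz three times in succession, duplicating in each coordinate direction $x_i$ the variable absent from a chosen pair of functions. This yields schematically
\begin{equation*}
|\Lambda_{\T,a}(f_1,f_2,f_3,f_4)|^{2^3}\;\le\;\prod_{\varepsilon\in\{0,1\}^3}\widetilde{\Lambda}_{\T,a}^{\varepsilon}\bigl(|f_1|^2,|f_2|^2,|f_3|^2,|f_4|^2\bigr),
\end{equation*}
where each $\widetilde{\Lambda}_{\T,a}^{\varepsilon}$ is a positive symmetric quartic form in the $|f_j|^2$, with each function appearing in a symmetric position of a full cube of functions.

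To bound each positive form $\widetilde{\Lambda}_{\T,a}^{\varepsilon}$, I would use the heat-equation identity $t\,\partial_t g_t=(t^2/2\pi)\Delta g_t$ together with integration by parts in $x_1^0,x_1^1$ to rewrite the factor $(\partial_1\partial_{3n+1}g)_t\,\frac{dt}{t}$ as a total $t$-derivative of a positive quantity plus a manifestly positive remainder. Integrating in $t$ over the strips $(\ell(Q)/2,\ell(Q))$ comprising $\Omega_{\T}$ telescopes the exact part to boundary terms at scales $\sim\ell(Q_{\T})$, while the positive remainder is absorbed by a further Cauchy-Schwarz step. Each boundary term is a Gaussian-tail average of $|f_j|^2$ centered on a bounded dilate of $Q_{\T}$, which is pointwise dominated by $(Mf_j(\T))^2$; combining over $j$ with the overall spatial factor $|Q_{\T}|$ and taking the $2^3$-th root yields the asserted bound. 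The shift $a$ contributes a polynomial factor $(1+|a|)^{O(n)}$ per Cauchy-Schwarz duplication and per Gaussian translation, which the prefactor $(1+|a|)^{-16n}$ is engineered to absorb.

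The main obstacle is choreographing the three Cauchy-Schwarz rounds in an order compatible with the subsequent heat-kernel integration by parts: each duplication must preserve the separable Gaussian structure so that the identity $t\,\partial_t g_t=(t^2/2\pi)\Delta g_t$ still applies after symmetrization, and the boundary terms produced by the telescoping must remain compatible with the convex-tree structure of $\T$ so that only scales up to $\ell(Q_{\T})$ survive. Controlling the positive remainder pointwise by the $Mf_j(\T)$ requires some care with the $z$-direction paraproduct-like block, whose analysis parallels the Coifman-Meyer-style argument underlying Proposition~\ref{prop:kernel2}. Keeping track of the polynomial $a$-dependence through these manipulations is tedious but routine.
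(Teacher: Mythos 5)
Your overall plan — intertwined Cauchy–Schwarz and heat-equation integration by parts in the localized tree setting, following the twisted-technique template of \cite{DST22} — is the right machinery, but the number of Cauchy–Schwarz passes you propose is wrong in a way that would make the final estimate too weak. You apply Cauchy–Schwarz three times, duplicating in each coordinate direction $x_i$, to obtain a "full cube" of functions. The paper explicitly flags exactly this pitfall: if one symmetrizes in all directions (treating the form as a variant of the dimension-four quadrilinear form of \cite{DST22}), the boundary terms of the telescoping produce maximal averages $(M|f_i|^4)^{1/4}$ (or worse), not $Mf_i = (M|f_i|^2)^{1/2}$. These higher-power maximal operators are only bounded on $L^{p}$ for $p > 4$, whereas the stopping-time argument in the proof of Proposition~\ref{prop:kernel1} needs boundedness for all $p > 2$; the estimate you would obtain is therefore not strong enough to conclude. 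The crucial and somewhat subtle observation in the paper is that after the first Cauchy–Schwarz (in the measure $d\mu$, which integrates out $x_2^0, x_3^0, x_2^1, x_3^1, z, r, t$), the resulting symmetrized expression \eqref{d:aftercs} should be read as a quadrilinear form in only \emph{two} "super-coordinates" $x_1^{0}, x_1^{1} \in \R^n$ and $(x_2^0,x_3^0,z_3), (x_2^1,x_3^1,z_2) \in \R^{3n}$, not in four. This perspective means exactly one more Cauchy–Schwarz suffices, and the $L^2$-normalized Brascamp–Lieb inequality (Lemma~\ref{lem:bl}) applied inside the telescoping produces $Mf_i$ directly.

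A second, smaller gap: you describe the telescoping as if it can be done directly on the truncated kernel over $\Omega_{\T}$. In the paper this does not work: the heat-equation telescoping identity requires the kernel to be untruncated in $t$ (or rather, integrated over full dyadic scales), so one must first replace the localization of the kernel by a localization of the functions via $1_{T_k}$. This swap is Lemma~\ref{lem:swaploc}, whose error estimate hinges on a nontrivial boundary count over the tree (Lemma~\ref{lem:bdrytree}). You gloss over this, but without it the fundamental-theorem-of-calculus telescoping does not apply directly. So the proposal needs two substantive fixes: reduce the Cauchy–Schwarz rounds from three to two (and group variables appropriately so that the Brascamp–Lieb inequality gives $L^2$ norms), and insert the swap-localization step before telescoping.
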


\begin{proof}
[Proof of Proposition \ref{prop:localabc}]
We may assume that the functions $f_j$ are real-valued, as otherwise we split them into real and imaginary parts. Interchanging the order of integration, using   the triangle inequality,  and   $|c(t)|\leq 1$,  we bound   $|\Lambda_{\T,a}(f_1,f_2,f_3,f_4)|$ by 
\[ (1+|a|)^{-16n} \int_{\Omega_\T} \int_{(\R^n)^{7}} \Big | \int_{\R^n}f_1(x^1_1,x^0_2,x^0_3,z_3)f_4(x_1^1,x_2^1,x_3^1,z_2)(\partial_1g)_t(x^1_1+r_1+a_1t) \, dx^1_1 \Big | \]
\[\times \Big| \int_{\R^n}f_2(x^0_1, x^1_2, x^0_3,z_2) f_3(x^0_1, x^0_2, x^1_3,z_1) (\partial_1g)_t(x^0_1+r_1+a_4 t)\, dx^0_1 \Big| \, d\mu \, ,\]
where
\[d\mu =   g_{t}((x^0_2, x^0_3, x^1_2, x^1_3)+(r_2,r_3,r_2,r_3)+t(a_2, a_3, a_5,a_6), z+(r_4,r_4,r_4)+t(a_7,a_8,a_9)) \]
\[\times d(x^0_2, x^0_3, x^1_2, x^1_3,z)\, dr\, \frac{dt}{t} \, . \]
Applying the Cauchy-Schwarz inequality with respect to $d\mu$ bounds this form by $(1+|a|)^{-16n}$ times   the geometric mean of 
\begin{equation} \label{aftercs1}
     \int_{\Omega_\T} \int_{(\R^n)^7} \Big| \int_{\R^n}f_1(x^1_1,x^0_2,x^0_3,z_3)f_4(x_1^1,x_2^1,x_3^1,z_2)(\partial_1g)_t(x^1_1+r_1+a_1t) dx^1_1 \Big |^2 d\mu\, .
\end{equation}
and 
\[   \int_{\Omega_\T} \int_{(\R^n)^{7}} \Big | \int_{\R^n}f_2(x^0_1, x^1_2, x^0_3,z_2) f_3(x^0_1, x^0_2, x^1_3,z_1) (\partial_1g)_t(x^0_1+r_1+a_4 t) dx^0_1  \Big |^2 d\mu \,.\]
 These two terms are analogous, which can be seen by swapping the roles of $x_3^0$ and $x_3^1$ in the second term. Thus, it suffices to proceed with \eqref{aftercs1}.

We integrate in $z_1$ and then expand out the square in \eqref{aftercs1}. This gives 
\[     \int_{\Omega_\T} \int_{(\R^n)^8}  f_1(x^0_1,x^0_2,x^0_3,z_3)f_4(x_1^0,x_2^1,x_3^1,z_2) f_1(x^1_1,x^0_2,x^0_3,z_3)f_4(x_1^1,x_2^1,x_3^1,z_2)  \]
\begin{equation}
    \label{d:aftercs}
    \times (\partial_1g)_t((x^0,z_2)+r+u_1t)(\partial_1g)_t((x^1,z_3)+r+u_2t)d(x,z_2,z_3) \, dr \, \frac{dt}{t} \,,
\end{equation}
where $u_1=(a_1,a_2,a_3,a_8),\,u_2=(a_1,a_5,a_6,a_9)$.  

If $n=1$, this expression corresponds to 
 the   local form $\widetilde{\Lambda}_{\T}$  from the dimension four case
 in \cite{DST22}. More precisely, it can be interpreted as the local form applied to a  $16$-tuple of  functions on $\R^4$, in the case of the identity matrix, and when all but four functions are set to the constant  $1$. 
 Applying  the result from \cite{DST22}  to this setup yields a variant of  Proposition \ref{prop:localabc},     where the  maximal operators  $Mf_i$ are replaced by $(M|f_i|^4)^{1/4}$. However, this is insufficient to establish Proposition \ref{prop:kernel1}. It is therefore  essential to view \eqref{d:aftercs} as a variant of the quadrilinear form from the two-dimensional case in \cite{DST22}, but acting on functions defined on $\R^n\times \R^{3n}$ instead of $\R\times \R$.  The variables are now
 \[x^0_1,x_1^1\in \R^n,\quad  (x^0_2,x^0_3,z_3), (x^1_2,x^1_3,z_2)\in \R^{3n}\, .\] This perspective leads to only one more application of the  Cauchy-Schwarz inequality, which subsequently gives the maximal operators $Mf_i$ and the desired H\"older estimate. 

The paper \cite{DST22} establishes an estimate for this quadrilinear form in the setting of functions on $\R\times \R$.   The argument, however, extends to $\R^n\times \R^{3n}$ without   significant complications. Since   \cite{DST22}    additionally focuses on other matters, we nevertheless prove the estimate in our specific setting for the reader's convenience and to maintain a self-contained exposition.

We will rewrite the form \eqref{d:aftercs} more concisely, for which we introduce slightly more general expressions. 
Let $d_1,d_2\geq 1$.
 For $y\in (\R^{d_1}\times \R^{d_2})^2$  we write $y=(y^0,y^1)$, where $y^0=(y_1^0,y_2^0)\in \R^{d_1}\times \R^{d_2},\, y^1=(y_1^1,y_2^1)\in \R^{d_1}\times \R^{d_2}$, and let $q=(q_1,q_2)\in \R^{d_1}\times \R^{d_2}$.  
 We   write $m=d_1+d_2$ and identify $\R^{d_1}\times \R^{d_2}$ with $\R^m$.
Define the maps $\rho_j: (\R^m)^2 \to \R^m$ by   
\[\rho_1(y)=(y_1^0,y_2^0)\,,\quad \rho_3(y)=(y_1^0,y_2^1)\,,\]
\[\rho_2(y)=(y_1^1,y_2^0)\,,\quad \rho_4(y)=(y_1^1,y_2^1)\,.\]
For  bounded functions $F_1,\ldots,  F_4$ on $\R^m$,  $v\in (\R^m)^2$, and $\T$ a convex tree in $\R^m$, 
we define  
\[\Theta_{\T,v}(F_1,F_2,F_3,F_4) =  (1+|v|)^{-4m}   \int_{\Omega_\T} \int_{(\R^{m})^2}  \Big( \prod_{j=1}^4 F_j(\rho_j(y)) \Big) \]\[\times (\partial_1\partial_{m+1}g)_{t}(y+(q,q)+vt) \,  dy \, dq \, \frac{dt}{t} \,.\]
Then,    \eqref{d:aftercs} can be recognized as
\[(1+|u|)^{4m}\Theta_{\T,u}(f_1,f_1,f_4,f_4) \]
with $u=(u_1,u_2)$, $d_1=n$, $d_2=3n$.
Recall that $u$ consists of the components of $a$ and satisfies $|u|\leq C|a|$. Thus, it  will  suffice to prove 
\begin{equation}
    \label{est:thetabd}
    |\Theta_{\T,u}(f_1,f_1,f_4,f_4)|\leq C   |Q_\T|Mf_1(\T)^2Mf_4(\T)^2 \,.
\end{equation}

To show this estimate, we will remove the localization of the kernel and localize the functions, similarly as in \cite{DST22}.  This will allow for translation $q\to q-vt$  and global telescoping arguments.
For a tree $\T$ in $\R^{m}$ we   define  a region in $\R^m$ 
\[T_k=\cup\{Q\in \T: \ell(Q)=2^k\}.\]
 For $F_j,\T,v$   as above,  $\alpha\geq 1$, and $1\leq i \leq m$,  we define
\[{\Theta}^{(i)}_{\T,v,\alpha}(F_1,F_2,F_3,F_4)  = (\alpha+|v|)^{-4m}    \sum_{k\in\Z} \int_{2^{k-1}}^{2^k} \int_{\R^m} \int_{(\R^m)^2}  \Big(\prod_{j=1}^4 (F_j1_{T_k})(\rho_j (y))\Big)\]
\[\times (\partial_{i}\partial_{m+i}g)_{Dt}(y+(q,q)+vt)  \,  dy\, dq\, \frac{dt}{t}\, ,  \]
 where   
 $D=D(\alpha)$ is a $2m\times 2m$ diagonal matrix with diagonal entries $d_{ll}=1$ if $1\leq l \leq d_1$ or $d_1+d_2< l \leq 2d_1+d_2$, and $d_{ll}=\alpha$ otherwise. Here, $g_{D t} = (\det D)^{-1}g_{t}(D^{-1}\cdot )$. We will only use this expression   when either  $\alpha=1$ or $v=0$. 
 
The following lemma will reduce the problem to proving a bound for $\Theta_{\T,u,1}^{(1)}$ instead. 
\begin{lemma}\label{lem:swaploc} There exists a constant $C>0$ such that for any   $F_j,\T,v$ as above, 
  \[  |\Theta_{\T,v}(F_1,F_2,F_3,F_4) - {\Theta}^{(1)}_{\T,v,1}(F_1,F_2,F_3,F_4)| \leq C   |Q_\mathcal{\T}| \prod_{i=1}^4   MF_i(\T)\, .\]
\end{lemma}

We will use another lemma, which   will reduce the problem of bounding $\Theta_{\T,0,\alpha}^{(1)}$ to bounding the sum of $\Theta_{\T,0,\alpha}^{(i)}$, $i\neq 1$, instead.
\begin{lemma}\label{lem:tel}  There exists a constant $C>0$ such that for any   $F_j,\T,\alpha$ be as above, 
   \[ \Big | \sum_{i=1}^m\Theta^{(i)}_{\T,0,\alpha}(F_1,F_2,F_3,F_4) \Big  | \leq C  |{Q}_{\T}| \prod_{i=1}^4MF_i(\T)\, .\]
\end{lemma}
We postpone the proofs of these two lemmas  until the end of this section and return to proving \eqref{est:thetabd}.

By Lemma \ref{lem:swaploc},  it thus suffices to show  
\begin{equation*}
    |{\Theta}^{(1)}_{\T,u,1}(f_1,f_1,f_4,f_4)| \leq C  |Q_\mathcal{\T}|   Mf_1(\T)^2Mf_4(\T)^2 \, .
\end{equation*}
Note that we can write $ {\Theta}^{(1)}_{\T,u,1}(f_1,f_1,f_4,f_4)$
in an analogous way as in  \eqref{aftercs1}, by writing the product of all non-positive terms as a square.  Indeed, we can write is as  
\[(1+|u|)^{-4m}   \sum_{k\in\Z} \int_{2^{k-1}}^{2^k}  \int_{\R^{d_1+3d_2}}  \Big|\int_{\R^{d_1}} (f_11_{T_k})(\rho_1 (y))(f_41_{T_k})(\rho_3(y)) (\partial_1g)_t(y_1^0+q_1+a_1t)\, dy_1^0\Big|^2\]
\[\times g_{t}((y_2^0,y_2^1)+(q_2,q_2)+
(a_2,a_3,a_8,a_5,a_6,a_9)t)   \, d(y_2^0,y_2^1,q)\, \frac{dt}{t}\, ,  \]
where   we have unravelled the definition of $u$ inside the Gaussian. 
 We estimate a non-centered Gaussian by a centered Gaussian as 
\[g_t(\cdot + vt)\leq 10 (2(1 + |v|))^{2d_2} g_{2t(1+|v|)}\, ,\]
and apply this to the Gaussian outside of the squared term. 
 We also change variables $q_1\to q_1-u_1t$. This gives a constant multiple of the form  
\[\alpha^{2d_2}(1+|u|)^{-4m}{\Theta}^{(1)}_{\T,0,\alpha}(f_1,f_1,f_4,f_4) \]
  with $\alpha=2(1+|(a_2,a_3,a_8,a_5,a_6,a_9)|)$.  
Since $\alpha^{2d_2}(1+|u|)^{-4m}\leq C$, it will suffice to show
\[{\Theta}^{(1)}_{\T,0,\alpha}(f_1,f_1,f_4,f_4)\leq C  |Q_\mathcal{\T}|   Mf_1(\T)^2Mf_4(\T)^2 \, . \]
Note that by symmetry,  ${\Theta}^{(i)}_{\T,0,\alpha}(f_1,f_1,f_4,f_4)\geq 0$ for each $1\leq i \leq d_1$. 
By Lemma \ref{lem:tel},  it will    thus suffice to prove  
\[\Big |\sum_{i=d_1+1}^m{\Theta}^{(i)}_{\T,0,\alpha}(f_1,f_1,f_4,f_4) \Big |\leq C  |Q_\T|Mf_1(\T)^2Mf_4(\T)^2\, .\]

To show this inequality,  we   bound the left-hand side of the last display by
\[\sum_{i=d_1+1}^{m} \sum_{k\in\Z} \int_{2^{k-1}}^{2^k} \int_{\R^{3d_1+d_2}} \Big | \int_{\R^{d_2}}(f_{1}1_{T_k})(y_1^0,y_2^0)(f_{1}1_{T_k})(y_1^1,y_2^0) (\partial_{i}g)_{\alpha t}(y_2^0+q_2) \, dy^0_2 \Big | \]
\[\times \Big| \int_{\R^{d_2}}(f_41_{T_k})(y_1^0,y_2^1) (f_41_{T_k})(y_1^1,y_2^1) (\partial_{i}g)_{\alpha t}(y_2^1+q_2) dy^1_2 \Big| g_{t}((y_1^0,y_1^1)+(q_1,q_1))   \, d(y_1^0,y_1^1,q) \, \frac{dt}{t} \, .\] 
We apply  the Cauchy-Schwarz inequality in $y_1^0,y_1^1,q,t$ and in the sums, and then expand out the square, similarly as we did in \eqref{d:aftercs}. This gives an estimate by
\[ \prod_{j\in\{1,4\}}\Big(\sum_{i=d_1+1}^m\Theta^{(i)}_{\T,0,\alpha}(f_j,f_j,f_j,f_j)\Big) ^{1/2}. \]
For each $1\leq i \leq m$,  we have  $\Theta^{(i)}_{\T,0,\alpha}(f_j,f_j,f_j,f_j)\geq 0$.   Therefore,   Lemma \ref{lem:tel}   gives 
\[ \Theta^{(i)}_{\T,0,\alpha}(f_j,f_j,f_j,f_j) \leq C   |Q_{\T}| Mf_j(\T)^4 \] 
for each $1\leq i\leq m$ and $1\leq j\leq 4$.  This finishes the proof of Proposition \ref{prop:localabc}, up to verification of Lemmas \ref{lem:swaploc} and \ref{lem:tel}.
\end{proof}

To prove Lemmas \ref{lem:swaploc} and \ref{lem:tel}  we will need the following Brascamp-Lieb inequality. 

\begin{lemma}\label{lem:bl}
    For any   measurable functions $F_1,F_2,F_3,F_4: \R^{m}\to \mathbb{C}$,
    \[\Big|\int_{(\R^m)^2} \Big( \prod_{j=1}^4 F_j(\rho_j(y)) \Big)   dy\Big|\leq \prod_{j=1}^4\|F_j\|_2\, .\]
\end{lemma}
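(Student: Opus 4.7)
The plan is to apply the Cauchy–Schwarz inequality four times, pairing the functions along the two $\{0,1\}$-valued coordinates separately. Writing $y=(y_1^0,y_1^1,y_2^0,y_2^1)$ with $y_1^\epsilon \in \R^{d_1}$ and $y_2^\epsilon \in \R^{d_2}$, the integrand becomes
\[
F_1(y_1^0,y_2^0)\,F_2(y_1^1,y_2^0)\,F_3(y_1^0,y_2^1)\,F_4(y_1^1,y_2^1),
\]
so $F_1$ and $F_2$ share the $y_2^0$ variable, while $F_3$ and $F_4$ share the $y_2^1$ variable.

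First I would integrate in $y_2^0$ and $y_2^1$. By Cauchy–Schwarz applied fiber-wise,
\[
\int F_1(y_1^0,y_2^0)F_2(y_1^1,y_2^0)\,dy_2^0 \le \|F_1(y_1^0,\cdot)\|_{L^2(\R^{d_2})}\,\|F_2(y_1^1,\cdot)\|_{L^2(\R^{d_2})},
\]
and the analogous bound holds for the $y_2^1$-integral involving $F_3$ and $F_4$. Multiplying these two inequalities and integrating in $(y_1^0,y_1^1)$ bounds the original integral by
\[
\Bigl(\int \|F_1(y_1^0,\cdot)\|_2 \|F_3(y_1^0,\cdot)\|_2 \, dy_1^0\Bigr)\Bigl(\int \|F_2(y_1^1,\cdot)\|_2 \|F_4(y_1^1,\cdot)\|_2 \, dy_1^1\Bigr).
\]

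Now I would apply Cauchy–Schwarz one more time to each of these two $y_1^\epsilon$-integrals; for instance
\[
\int \|F_1(y_1^0,\cdot)\|_2 \|F_3(y_1^0,\cdot)\|_2 \, dy_1^0 \le \|F_1\|_{L^2(\R^m)} \|F_3\|_{L^2(\R^m)},
\]
and likewise for the pair $(F_2,F_4)$. Combining gives the desired bound $\prod_{j=1}^4 \|F_j\|_2$. There is no real obstacle here: this is simply the two-parameter Loomis–Whitney inequality for a $2\times 2$ box, whose iterated Cauchy–Schwarz proof works verbatim because the four projections $\rho_j$ exhaust the vertices of the $\{0,1\}^2$ cube and each coordinate appears in exactly two of the $F_j$.
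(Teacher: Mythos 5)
Your proof is correct and matches the approach the paper references: the paper cites the $d=1$ case of this estimate in \cite{DST22} (Lemma 3.2), which is proved by exactly this iterated Cauchy--Schwarz argument, and remarks that the higher-dimensional case is analogous. Your two-stage pairing (first $F_1$ with $F_2$ and $F_3$ with $F_4$ in $y_2^0$ and $y_2^1$, then $F_1$ with $F_3$ and $F_2$ with $F_4$ in $y_1^0$ and $y_1^1$) is the standard ``box'' Cauchy--Schwarz proof and is complete; the only cosmetic point is that absolute values should be placed around the integrands before applying Cauchy--Schwarz, since the $F_j$ are complex-valued.
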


This lemma was proven in the case $d=1$ by repeated applications of the Cauchy-Schwarz inequality in {\cite[Lemma 3.2]{DST22}}. The proof when the variables are in higher dimensions follows in the analogous way and we omit it.

We will also need an estimate on the boundary   of a convex tree from \cite{DST22}.
\begin{lemma}[{\cite[Lemma 4.1]{DST22}}]
\label{lem:bdrytree}
    There exists a constant $C>0$ such that for any convex tree $\T$ in $\R^m$, 
    \[\sum_{k\in\Z} 2^{mk}\# (\partial T_k \cap (2^k\Z)^m)\leq C|Q_{\T}|\, .\]
\end{lemma}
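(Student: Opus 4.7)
The plan is a combinatorial charging argument, using the convexity of $\T$ in an essential way. First I would reduce from counting lattice vertices to counting ``boundary cubes'': let $\partial\T_k$ denote the set of cubes $Q\in\T$ of side-length $2^k$ such that some vertex-adjacent scale-$k$ dyadic cube $Q'$ is not contained in $T_k$ (either because $Q'\notin\T$ or because $Q'\not\subseteq Q_\T$). Since each vertex $v\in\partial T_k\cap(2^k\Z)^m$ is a corner of at most $2^m$ scale-$k$ cubes, at least one of which must lie in $\partial\T_k$, one has $\#(\partial T_k\cap(2^k\Z)^m)\le 2^m\#\partial\T_k$. Hence it suffices to prove
$$
\sum_{k\in\Z}2^{mk}\#\partial\T_k=\sum_{k}\sum_{Q\in\partial\T_k}|Q|\le C|Q_\T|.
$$

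For each $Q\in\partial\T_k$ I would fix a witness $Q'$ (a vertex-adjacent scale-$k$ cube not in $T_k$) and split into two cases. \textbf{Case A:} $Q'\not\subseteq Q_\T$. Then $Q$ lies within distance $2^k$ of $\partial Q_\T$, and an elementary surface count gives at most $C\ell(Q_\T)^{m-1}/2^{k(m-1)}$ such cubes at scale $k$. Summing over $k\le\log_2\ell(Q_\T)$ yields a geometric series bounded by $C\ell(Q_\T)^m=C|Q_\T|$. \textbf{Case B:} $Q'\subseteq Q_\T$ and $Q'\notin\T$. By the convexity of $\T$, no subcube of a cube outside $\T$ can lie in $\T$, so $Q'$ has a unique maximal dyadic ancestor $\widetilde Q\subseteq Q_\T$ with $\widetilde Q\notin\T$. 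The collection of such stopping cubes $\widetilde Q$ is pairwise interior-disjoint and contained in $Q_\T$, so $\sum|\widetilde Q|\le|Q_\T|$. I would then charge $Q$ to $\widetilde Q$.

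To close Case B, fix a stopping cube $\widetilde Q$ of side-length $2^j$. Every $Q$ charged to $\widetilde Q$ is a scale-$k$ cube in $\T$ (with $k\le j$) that is interior-disjoint from $\widetilde Q$ but shares a vertex with it, so there are at most $C(2^{j-k})^{m-1}$ such $Q$. The total contribution of $\widetilde Q$ is therefore
$$
\sum_{k\le j}2^{mk}\cdot C(2^{j-k})^{m-1}=C\,2^{j(m-1)}\sum_{k\le j}2^k\le C\,2^{jm}=C|\widetilde Q|,
$$
and summing over the disjoint $\widetilde Q$ yields $C|Q_\T|$. Combined with Case A this proves the estimate. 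The main obstacle is purely organizational: verifying the two elementary geometric counts (scale-$k$ cubes near $\partial Q_\T$ in Case A, and scale-$k$ cubes vertex-adjacent to a stopping cube in Case B) and ensuring the charging map is well defined; once that bookkeeping is set, the two geometric series close up immediately.
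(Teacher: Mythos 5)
The paper does not give its own proof of this lemma; it is cited from \cite[Lemma 4.1]{DST22}, so there is no in-text argument to compare against. Your stopping-cube charging argument is correct and is, as far as I can tell, essentially the same strategy used in the reference: reduce lattice points to boundary cubes, handle cubes abutting $\partial Q_\T$ by a surface count, and charge the remaining boundary cubes to the maximal dyadic subcubes of $Q_\T$ not in $\T$, whose disjointness is exactly where convexity enters.

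One small imprecision in Case B worth tightening: the charged cube $Q$ need not literally share a vertex with $\widetilde Q$. What you can say is that $Q$ is interior-disjoint from $\widetilde Q$ (since $Q\in\T$, $\widetilde Q\notin\T$, $\ell(Q)\le\ell(\widetilde Q)$, and by convexity no subcube of $\widetilde Q$ lies in $\T$, so neither containment is possible), and $Q$ shares a vertex $v$ with $Q'\subseteq\widetilde Q$; interior-disjointness then forces $v\in\partial\widetilde Q$. The number of scale-$k$ cubes interior-disjoint from $\widetilde Q$ that have a vertex on $\partial\widetilde Q$ is still $\le C(2^{j-k})^{m-1}$, so the geometric series closes exactly as you computed. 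This does not affect the validity of the argument or the constant up to dimensional factors.
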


Now we are ready to prove Lemmas \ref{lem:swaploc} and \ref{lem:tel}.
\begin{proof}[Proof of  Lemma \ref{lem:swaploc}] 
We proceed along the lines of the argument  in {\cite[Section 4]{DST22}}  in the case of the identity matrix.
First we use    {\cite[Lemma 3.4]{DST22}}, which in the particular case of the identity matrix gives
   \[ \left | {(1+|v|)^{-4m}} (\partial_1\partial_{m+1} g)_{t}(y+(q,q)+vt) \right  | \leq C t^{-2m}\sum_{n\geq 0} 2^{-4nm}\chi(|y+(q,q)|\leq 2^nt) \, ,\]
where $\chi(\mathcal{A})$ equals $1$ if  the condition $\mathcal{A}$ is satisfied and $0$ otherwise. 
With this and   \eqref{symest}, we estimate \[ |\Theta_{\T,v}(F_1,F_2,F_3,F_4) -{\Theta}_{\T,v,1}^{(1)}(F_1,F_2,F_3,F_4)|\] by an absolute constant  times
\begin{equation}
    \label{eq:diff1}
\sum_{n\geq 0} 2^{-4mn} \sum_{k\in \Z} \int_{2^{k-1}}^{2^k} \int_{T_k}\int_{S_k^c} \Big( \prod_{j=1}^4 |F_j(\rho_j(y))| \Big )t^{-2m}  \chi(|y-(q,q)|\leq 2^nt) \, dy\, dq\, \frac{dt}{t}
\end{equation}
\begin{equation}
    \label{eq:diff2}
+ \sum_{n\geq 0} 2^{-4mn} \sum_{k\in \Z} \int_{2^{k-1}}^{2^k} \int_{T_k^c}\int_{S_k} \Big( \prod_{j=1}^4 |F_j(\rho_j(y))| \Big )t^{-2m}  \chi(|y-(q,q)|\leq 2^nt) \, dy\, dq\, \frac{dt}{t}\, ,
\end{equation}
where $S_k=\{y\in \R^{2m}: \rho_j(y)\in T_k\, \textup{ for all }j=1,2,3,4\}$.

First we estimate the summand in  \eqref{eq:diff1} for fixed $n$ and $k$ by 
 \[C 2^{-2mk}\int_{T_k}\int_{S_k^c} \Big( \prod_{j=1}^4 |F_j(\rho_j(y))|  \chi(|\rho_j(y)-q|\leq 2^{n+k}) \Big ) \, dy \, dq\, ,\]
where we used $\rho_j(q,q)=q$.
Let $E$ be the set of $q\in T_k$ such that the inner integral of the last display does not vanish. We estimate the last display using Lemma~\ref{lem:bl} by
\begin{equation}
    \label{eqn:error1}
    C 2^{-2mk} \int_{ E}  \prod_{j=1}^4 \|F_j(w) 
 \chi(|w-q|\le 2^{n+k})\|_{L^2(w)}
\,dq \le C 2^{2mn} |E| \prod_{j=1}^4 
{M}F_j(\T)\, .
\end{equation}

We proceed by estimating $|E|$. If $q\in E$, then there is $y\in S_k^c$ such that for all $j$, 
\[|\rho_j (y)-q|\le 2^{n+k}\, .\]
By definition of $S_k$,
$\rho_{j_0} (y)\not \in  T_k$ for some $1\leq j_0\leq 4$.
Let $Q_q$ be a dyadic cube of side length  $2^k$ containing $q$ and let $Q_y$ be a dyadic cube of side length $2^k$ such that
$\rho_{j_0} (y) \in  Q_y$.
Then $Q_q\subseteq T_k$ and $Q_y \not \subseteq T_k$.
But both $Q_q$ and $Q_y$ are contained in the ball $B$ of radius
$C2^{n+k}$ about $q$ for 
sufficiently large $C>1$.
Therefore, there is
$w\in \partial T_k \cap (2^k \Z)^m$
such that $w\in  B$.
 But then
$q$ is contained in the ball of radius $C2^{n+k}$ about $w$.
 This implies
\[|E|\le C2^{nm+km}  \#(\partial T_k \cap (2^k \Z)^m)\, .\]
Applying this estimate to  
\eqref{eqn:error1} and summing   in $n$ and $k$, we obtain
for \eqref{eq:diff1} a upper bound by a constant times
\[ \Big( \sum_{k \in \Z}2^{km} 
\#(\partial T_k \cap (2^k \Z)^m) \Big )\prod_{j=1}^4 
{M}F_j(\T)\,.\]
 Lemma \ref{lem:bdrytree} then yields the desired bound for \eqref{eq:diff1}.

It remains to estimate \eqref{eq:diff2}.
Fix    $n$ and $k$ and estimate the corresponding summand by  
\[ C 2^{-2mk} \int_{T_k^c}\int_{S_k} \Big( \prod_{j=1}^4 |F_j(\rho_j (y))| 
 \chi(|\rho_j (y)- q|\le 2^{n+k})\Big)
\, dy \, dq\, .\]
Let $E$ be the set of $q\in T_k^c$ such that the inner integral of the last display is not zero. We estimate the last display with Lemma \ref{lem:bl} by
\begin{equation}
    \label{eqn:error2}
     C 2^{-2mk} \int_{ E}  \prod_{j=1}^4 \|F_j(w) 
 \chi(|w-q|\le 2^{n+k})\|_{L^2(w)}
\,dq \,.
\end{equation}
If $p\in E$, then there is $y\in S_k$ such that for all $j$,  
$|\rho_j(y)-q|\le 2^{n+k}.$
By definition of $S_k$, for every $j$ there is $q^{(j)}\in T_k$ such that
$\rho_j(y)=q^{(j)}$.
Using the triangle inequality, we   estimate  \eqref{eqn:error2} by
\begin{equation*}
 C 2^{-2mk} \int_{ E}  \prod_{j=1}^4 \|F_j(w) 
 \chi(|w-q^{(j)}|\le 2^{n+k+1})\|_{L^2(w)}
\,dq \le C 2^{2mn} |E| \prod_{j=1}^4 
{M}F_j(\T)\, .
\end{equation*}
To obtain the last inequality we may argue as for \eqref{eqn:error1}, because $q^{(j)}\in T_k$.
Similarly as before, 
the ball of radius $C2^{n+k+1}$ about $p$ contains $p_j$ and   as before we see that it also contains a point 
in $\partial T_k \cap (2^k \Z)^m$.
We   estimate  
\[|E|\le C2^{nm+km}  \#(\partial T_k \cap (2^k \Z)^m)\, ,\]
sum in $n$ and  $k$, and use Lemma \ref{lem:bdrytree} to  conclude the desired bound for 
\eqref{eq:diff2}.
This finishes the proof of the lemma.
\end{proof}

 \begin{proof}[Proof of Lemma \ref{lem:tel}]
 We proceed along the lines of the argument in  {\cite[Section 5.2]{DST22}} in the case of the identity matrix, and streamline the proof in our setting.

 Integrating by parts in $q$, we see that 
 \[-2\sum_{i=1}^m\int_{\R^m} (\partial_{i}\partial_{i+m}g)_{Dt}(y+(q,q))\, dq= \int_{\R^m} (\Delta g)_{Dt} (y+(q,q)) \, dq\, .\]
  Using the heat equation 
$(\Delta g)_{tD}
=  2\pi t \partial_t(g_{tD})$,  we thus obtain 
\[ -\alpha^{4m} 4 \pi \sum_{i=1}^m \Theta^{(i)}_{\T,0,\alpha}(F_1,F_2,F_3,F_4)  \]
\[
 =    \sum_{k\in \Z}  \int_{\R^{m}}   \int_{\R^{2m}}   \Big(\prod_{j=1}^4 (F_{j}1_{T_k}) (\rho_j(y)) \Big) \int_{2^{k-1}}^{2^k}
t\partial_{t}(g_{tD})(y+(q,q)) \, \frac{dt}{t} \, 
 dy \, dq\, .\]
 
Let $k_\T$ be defined by $\ell(Q_\T)=2^{k_\mathcal{T}}$. 
 By the fundamental theorem of calculus in $t$, the last display equals
\[ \sum_{k\in \Z} \int_{\R^m}\int_{\R^{2m}}  \Big(\prod_{j=1}^4 (F_{j}1_{T_k}) (\rho_j(y)) \Big)  (g_{2^{k}D} - g_{2^{k-1}D})(y+(q,q)) \, dy \, dq  \]
\begin{equation}\label{e:teles}
= \int_{\R^m}\int_{\R^{2m}}  \Big(\prod_{j=1}^4 (F_{j}1_{Q_{\mathcal{T}}}) (\rho_j(y)) \Big)    g_{2^{k_\T}D}(y+(q,q))\, dy \,  dq   
\end{equation}
\begin{equation}\label{e:teles1}
  + \sum_{k<k_\T} 
\int_{\R^m}\int_{\R^{2m}}   \Big( \prod_{j=1}^4 (F_j1_{T_k}) (\rho_j (y)) 
- \prod_{j=1}^4 (F_j1_{T_{k+1}}) (\rho_j (y))  \Big )
 g_{2^kD}(y+(q,q))\, dy \, dq \, .  
 \end{equation}
We estimate the two terms \eqref{e:teles} and \eqref{e:teles1} separately.

First we estimate \eqref{e:teles1}. Let $\chi$ be the characteristic function of $[-1,1]^{2m}$. 
We bound 
\begin{equation*}
g\le C \sum_{n\ge 0} e^{-2^{n}}\chi_{2^n}\, .
\end{equation*}
We fix $k<k_\T$ and $n\ge 0$, and consider
\begin{equation*}
\int_{\R^m}\int_{\R^{2m}} \Big( \prod_{j=1}^4 (F_j1_{T_k}) (\rho_j (y)) 
- \prod_{j=1}^4 (F_j1_{T_{k+1}}) (\rho_j (y))  \Big )
 \chi_{2^{n+k}D}(y+(q,q)) \, dy \, dq  \, .  
\end{equation*} 
Using the distributive law and $T_k\subseteq T_{k+1}$, we estimate the integrand as
\[\Big| \prod_{j=1}^4 (F_j1_{T_k}) (\rho_j (y)) 
- \prod_{j=1}^4 (F_j1_{T_{k+1}}) (\rho_j (y))  \Big |\]
\[\le \sum_{j_0=1}^4
|F_{j_0}1_{T_{k+1}\setminus T_{k}}| (\rho_{j_0} (y)) 
\prod_{j\neq j_0}
|F_j1_{T_{k+1}}| (\rho_j (y)) \,. \]
 We fix $j_0$. For simplicity of notation we set $j_0=1$, the other values of $j_0$ will be analogous.

Let $Q$ be a cube of side length $2^k$ contained in $T_{k+1}\setminus T_k$ and consider
\begin{equation}
    \label{e:teles3}
\int_{\R^m}\int_{\R^{2m}}   
 |F_11_{Q}| (\rho_{1} (y)) 
\Big( \prod_{j=2}^4
|F_j1_{T_{k+1}}| (\rho_j (y)) \Big )
  \chi_{2^{n+k}D}(y+(q,q)) \, dy \,  dq  \, .  
\end{equation} 
Since $\alpha\geq 1$, we have $y+(q,q)\in 2^{n+k}\alpha[-1,1]^{2m}$. 
Applying $\rho_j$,  we obtain for $1\leq j\leq 4$, 
\[\rho_j(y)+q \in 2^{n+k}\alpha[-1,1]^{m}\, .\]
We also have $\rho_1(y)\in Q$, so $q\in P$, where  
\[P= 2^{n+k}\alpha[-1,1]^{m} -Q \, .\]
Thus, for each $j=2,3,4$, we have $\rho_j(y)\in S$, where 
\[S= Q + 2^{n+k+1}\alpha[-1,1]^{m} \, .\] 
Thus,  we can bound  \eqref{e:teles3} by
\[ 2^{-2m(n+k)}\, (\det D)^{-1}|P|\int_{\R^{2m}}  |F_11_{Q}| (\rho_1 (y)) 
\Big( \prod_{j=2}^4
|F_j1_{T_{k+1}\cap S}| (\rho_j (y)) \Big )\, 
  dy   \, .\]
By  Lemma \ref{lem:bl}, we  estimate this by
\[ C 2^{-2m(n+k)}(\det D)^{-1} |P|
\|F_11_{Q}\|_2\prod_{j=2}^4\|F_j1_{T_{k+1}\cap S}\|_2 \]
\[=  C 2^{-2m(n+k)}(\det D)^{-1} |P| |Q|^{1/2} |S|^{3/2}\Big( \frac{1}{|Q|}\int_{Q} F_1^2\Big)^{1/2} \prod_{j=2}^4  \Big( \frac{1}{|S|}\int_{S} F_j^2 \Big)^{1/2}  . \]
Next, we        crudely estimate $(\det D)^{-1}=\alpha^{-2d_2}\leq 1$ and  $|S|^{3/2}\leq C 2^{2mn}\alpha^{2m}|Q|^{3/2}  $.  We also use that    and $|Q|=C2^{mk}$,  $|P|\leq  C 2^{m(n+k)}\alpha^m$, and that $S$ covers $Q$.  This bounds the last display by
\[C |Q| 2^{mn}\alpha^{3m} \prod_{j=1}^4MF_j(\T)\, .  \]
Summing over the disjoint cubes $Q$ in $T_{k+1}\setminus T_k$, summing over $k<k_{\T}$, and using that the regions $T_{k+1}\setminus T_k$ are disjoint in $Q_\T$,  we then estimate \eqref{e:teles1} by
\[C \Big(  \sum_{n\geq 0} e^{-2^n} 2^{mn} \Big) \alpha^{3m}|Q_{\T}|  \prod_{j=1}^4MF_j(\T) \, . \]
Then  it remains to sum in $n$.

It remains to estimate \eqref{e:teles}, which is done similarly as \eqref{e:teles1} but simpler. 
Estimating the Gaussian by a superposition of characteristic functions of cubes, we   consider
\begin{equation*}
 \int_{\R^m}  \int_{\R^{2m}} 
\Big( \prod_{j=1}^4
(|F_j|1_{Q_{\T}}) (\rho_j (y)) \Big )
  \chi_{2^{n+k}D}(y+(q,q)) \, dy \, dq \, .
\end{equation*} 
This is then  estimated analogously to   \eqref{e:teles3}.
 \end{proof}

 To finish the proof of Proposition  \ref{prop:kernel2} it remains to do a   stopping time argument, similarly as in \cite{twisted, DST22}.  Denote by $\Lambda(f_1,f_2,f_3)$ the form  in the statement of Proposition \ref{prop:kernel2}.
  Let $\mathcal{Q}$ denote the collection of all dyadic cubes in $\R^{4n}$ contained in $[-2^N,2^N]^{4n}$,  with side-lengths in $[2^{-N},2^N]$ for a large $N>0$.   By the monotone convergence theorem, we may assume that in the integral defining $\Lambda$ one has $(t,p)\in \Omega_{\mathcal{Q}}$. 
By homogeneity we may also normalize 
\[\|f_j\|_{p_j} =1\]
for each  $j=1,2,3$. 
Thus, it suffices to prove 
\[|\Lambda(f_1,f_2,f_3)|\leq C (1+|a|)^{16n}\, . \]

For every triple of integers $k=(k_1,k_2,k_3)$, we define 
\[\mathcal{P}_k=\{Q\in \mathcal{Q}: 2^{k_j-1}< \sup_{Q'\supseteq Q } \Big(\frac{1}{|Q'|} \int_{Q'} |f_j|^2 \Big)^{1/2} \leq  2^{k_j}\, \textup{for $j=1,2,3$}\}\, ,\]
where the supremum is over all cubes $Q'$ in $\R^{4n}$ with sides parallel to the coordinate axes.
 Let $\mathcal{P}_k^{\max}$ be the collection of all maximal dyadic cubes in $\mathcal{P}_k$ with respect to  set inclusion. For every $Q\in \mathcal{P}_k^{\max}$, the collection
 \[\mathcal{T}_{Q} = \{Q'\in \mathcal{P}_k: Q'\subseteq Q  \}\]
is a convex tree and for different $Q\in \mathcal{P}_k^{\max}$, the corresponding trees are disjoint. Proposition~\ref{prop:localabc} gives
\[|\Lambda_{\mathcal{T}_Q,a}(f_1,f_2,f_3, 1)| \leq  |Q| \sum_{j=1}^3  \Big(\frac{1}{|Q'|} \int_{Q'} |f_j|^2 \Big)^{1/2} \leq |Q| 2^{k_1+k_2+k_3}\, .\]
Therefore,
\[(1+|a|)^{-16n}|\Lambda (f_1,f_2,f_3)| \leq \sum_{k\in \Z^3} \sum_{Q\in \mathcal{P}_k^{\max}} |\Lambda_{\mathcal{T}_Q,a}(f_1,f_2,f_3,1)|\leq C \sum_{k\in \Z^3} 2^{k_1+k_2+k_3} \sum_{Q\in \mathcal{P}_k^{\max}}  |Q| \, .   \]
By disjointness of the maximal cubes, for each $j=1,2,3$, 
\[\sum_{Q\in \mathcal{P}_k^{\max}}  |Q|  = \Big| \bigcup_{Q\in \mathcal{P}_k^{\max}}  Q  \Big|\subseteq |\{Mf_j>C2^{k_j}\}|\, ,\]
where $Mf_j$ denotes a ``quadratic" variant of the Hardy-Littlewood maximal function
\[Mf_j(x) = \sup_{Q'\ni x} \Big(\frac{1}{|Q'|} \int_{Q'} |f_j(x)|^2 \, dx\Big)^{1/2} \, ,   \]
with supremum is over all cubes $Q'$   with sides parallel to the coordinate axes.
We split $\Z^3=\mathcal{K}_1\cup \mathcal{K}_2\cup\mathcal{K}_3$, where
$\mathcal{K}_j = \{(k_1,k_2,k_3): k_jp_j\geq k_{j'}p_{j'} \textup{ for $j'=1,2,3$}\}. $
Thus, 
\[(1+|a|)^{-16n}|\Lambda (f_1,f_2,f_3)|  \leq \sum_{j=1}^3\sum_{(k_1,k_2,k_3)\in \mathcal{K}_j} 2^{k_1+k_2+k_3}  |\{Mf_j>C2^{k_j}\}| \]
\[= \sum_{j=1}^3 \sum_{k_j\in \Z} 2^{p_jk_j} |\{Mf_j>C2^{k_j}\}|
\prod_{j'\neq j}\sum_{k_{j'}: k_{j'}\leq p_jk_j/p_{j'}} 2^{k_{j'}-\frac{p_jk_j}{p_{j'}}}  \]
\[\leq C \sum_{j=1}^3\|M f_j\|_{p_j}^{p_j} \leq C \sum_{j=1}^3\|f_j\|_{p_j}^{p_j}\leq C \,. \]
This finishes the proof of Proposition \ref{prop:kernel2}.

\appendix
\section{List of indecomposable modules}
\label{sec classification} 

In Tables \ref{table 4sub}--\ref{table Holder} below we list the modules used in the classification Theorems \ref{thm mod class} and \ref{thm Hclass}.
We specify the modules using block matrices
\begin{center}
\begin{tabular}{|>{\centering}p{0.8cm}|>{\centering}p{0.8cm} |>{\centering}p{0.8cm}| p{0.8cm}|}
    \hline
    $A_{10}$ & $A_{11}$     & $A_{12}$ & $A_{13}$\\
    \hline
    $A_{20}$   & $A_{21}$ & $A_{22}$ & $A_{23}$\\
    \hline
\end{tabular}\,.
\end{center}
We define $M$ to be $\R^n$ for some $n$, and identify each subspace $M_i$ with $\R^{n_i}$ as well. The block columns 
\[
    \begin{pmatrix} A_{1i} \\A_{2i} \end{pmatrix}
\]
for $i=0,1,2,3$ then specify the matrices of the embeddings $M_i \to M$ defining the module $\mathbf{M}$, which fixes implicitly also the dimensions of the subspaces $M_i$ and of $M$. Two modules defined like this are isomorphic if the corresponding block matrices can be transformed into each other by row operations on the whole matrix and column operations on each block column.
In terms of the corresponding Brascamp-Lieb data, the transposes of the block columns are the matrices of the maps $\Pi_i$. 

Following the notation of \cite{Med+2004}, we write $I_n$ for the $n \times n$ identity matrix. We denote by $J_n(\lambda)$ an $n \times n$ Jordan block with eigenvalue $\lambda$. An arrow in the superscript of a matrix indicates that a row or column of zeros is to be added in the direction the arrow points, for example $I_n^\uparrow$ is the $(n+1) \times n$ matrix with one row of zeros, followed by the $n \times n$ identity matrix in the rows below.

Finally, the matrix $X= X(P,s)$ in modules $\mathbf{0}$ and $\mathbf{N}_n$ denotes the companion matrix of the polynomial $(P(t))^s$, for some $s \ge 1$ and an irreducible polynomial $P \in \mathbb{R}[t]$ with $P(t) \ne t$ and $P(t) \ne t - 1$.
The companion matrix of a polynomial $Q(t) = t^n + a_{n-1} t^{n-1} + \dotsb + a_0$ is the matrix 
\[
    \begin{pmatrix}
        0  & \dots & 0 & -a_0\\
        1  & \dots & 0 &-a_1\\
        \vdots & \ddots & \vdots & \vdots\\
        0  & \dots & 1 & -a_{n-1}
    \end{pmatrix}\,,
\]
with characteristic polynomial $Q$.
Note that the conditions on $P$ imply that $P(t) = t - \lambda$ with $\lambda \neq 0, 1$ or $P(t) = t^2 - 2\lambda t + \mu$ with $\mu > \lambda^2$. 

The indecomposable modules in Table \ref{table 4sub} are only listed up to permutation of the subspaces. The additional information which permutations give rise to non-
isomorphic modules is given by the following lemma, which is Remark 1 in \cite{Med+2004}.
\begin{lemma}[{\cite[Remark 1]{Med+2004}}]
    \label{lem perm}
     For the modules $\mathbf{I\!I}, \mathbf{I\!I\!I}, \mathbf{I\!I\!I^*}, \mathbf{IV}, \mathbf{IV^*}, \mathbf{V}, \mathbf{V}^*$, each permutation of the subspaces that leaves their dimensions invariant gives rise to an isomorphic module. For the modules of type $\mathbf{0}$, all permutations of the subspaces give rise to another module of type $\mathbf{0}$, but possibly with different $X$. For module $\mathbf{I}$, swapping the columns $1,3$ or swapping columns $2,4$ gives rise to an isomorphic module. Thus there are $6$ isomorphism classes of modules that can be obtained by permuting the subspaces in type $\mathbf{I}$. 
\end{lemma}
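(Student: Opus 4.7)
The plan is to verify each case by direct block-matrix manipulation. Recall from the conventions preceding the statement that two modules given by block matrices are isomorphic precisely when one can be transformed into the other via row operations on the entire matrix (corresponding to a change of basis on the ambient space $M$) combined with column operations internal to each block column (corresponding to a change of basis on each $M_i$). Permuting the subspaces corresponds to permuting the four block columns, so the task reduces to checking, for each permutation preserving the tuple of column widths, whether the permuted block matrix can be brought back to the original by such operations.

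For the modules $\mathbf{I\!I}, \mathbf{I\!I\!I}, \mathbf{I\!I\!I^*}, \mathbf{IV}, \mathbf{IV^*}, \mathbf{V}, \mathbf{V}^*$ the verification is a routine exercise: each of these indecomposables is built from identity blocks $I_n$ (possibly with an appended zero row or column) arranged in a pattern that is manifestly symmetric under any permutation of the columns of equal width. I would exhibit for each type an explicit invertible row matrix $P$ (typically a block permutation matrix, possibly with one off-diagonal correction block) that realizes the prescribed column permutation, then absorb the leftover column noise into invertible transformations within the individual block columns.

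For the type $\mathbf{0}$ modules, which are parametrized by the companion matrix $X = X(P,s)$ of an irreducible polynomial $P(t) \ne t, t-1$, the four block columns encode a two-dimensional subspace structure depending linearly on $X$ and $I - X$. A permutation of the subspaces replaces this data by a structurally identical expression but with $X$ replaced by one of $X$, $I - X$, $X^{-1}$, $(I-X)^{-1}$, or a conjugate thereof (all of which are invertible since $0, 1$ are excluded as eigenvalues of $X$). Each replacement is conjugate to the companion matrix of another irreducible polynomial avoiding $t$ and $t - 1$, so the resulting module is again of type $\mathbf{0}$, possibly with a different $X$.

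The main obstacle will be type $\mathbf{I}_n$. The four block columns correspond to distinguished positions in a Jordan-type chain and are not interchangeable. I would first realize the two positive isomorphisms explicitly: swapping columns $1,3$ is produced by an anti-diagonal change of basis on $M$ that reverses the chain, and swapping columns $2,4$ is produced by the analogous dual reversal; composing them gives a third nontrivial involution, yielding a Klein four subgroup $V_4 \subset S_4$ of permutations preserving the isomorphism class, hence at most $|S_4|/|V_4| = 6$ classes. To show exactly $6$ classes arise, I would detect the remaining permutations by a discrete invariant such as the tuple $(\dim(M_i \cap M_j))_{i \ne j}$ or the dimensions of successive intersections $M_i \cap (M_j + M_k)$; these invariants distinguish the "head" and "tail" positions of the Jordan chain from the "intermediate" positions and are not preserved by the forbidden transpositions, ruling out isomorphism in those cases.
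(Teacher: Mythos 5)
Your proposal is correct. Note that the paper itself gives essentially no proof: it merely states that the lemma ``can be directly checked by transforming the corresponding block matrices into each other using the allowed row and column transformations'' and defers to \cite{Med+2004}. You are therefore supplying precisely the verification the paper omits, and you do so by the same block-matrix route the paper alludes to, with some conceptual scaffolding (cross-ratio, intersection invariants) layered on top.

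A few details in the sketch could be sharpened, none fatal. For type $\mathbf{0}$, the list $X$, $I-X$, $X^{-1}$, $(I-X)^{-1}$ omits $X(X-I)^{-1}$ and $(X-I)X^{-1}$, and ``or a conjugate thereof'' does not capture them; all six are nonetheless invertible rational functions of $X$ with spectrum avoiding $\{0,1\}$, and since $X$ is cyclic and the corresponding M\"obius maps are injective on the spectrum, each image is again cyclic and similar to a companion matrix of an admissible $P^s$, so the conclusion holds. For type $\mathbf{I}$, the isomorphism realizing the swap of columns $1,3$ is not an anti-diagonal chain reversal: tracking the row and block-column operations, the fourth block becomes $J_n(0)\bigl(J_n(0)-I\bigr)^{-1}$, which is nilpotent of rank $n-1$, hence similar to $J_n(0)$ by some $S$; one then conjugates the ambient basis by $\operatorname{diag}(S,S)$ to finish. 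Your choice of invariant $\dim(M_i\cap M_j)$ is exactly the right one: for $\mathbf{I}_n$ it equals $1$ for the unordered pair $\{M_1,M_3\}$ (the pair swapped by ``columns $2,4$'') and $0$ for the other five pairs, so an admissible permutation must stabilize that pair as a set, which yields precisely the Klein four group; the six cosets are then distinguished by which of the $\binom{4}{2}=6$ pairs carries the nonzero intersection. That, rather than a head/tail/intermediate trichotomy, is the content the invariant is recording.
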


Lemma \ref{lem perm} can be directly checked by transforming the corresponding block matrices into each other using the allowed row and column transformations. We omit this and refer to \cite{Med+2004}. 

\begin{table}
    \centering
\begin{tabular}{c c c}
$\mathbf{M}$ & $\dim (M, M_0, M_1, M_2, M_3)$ & block matrix\\
\hline\hline
$\mathbf{0}$& $(2n, n,n,n,n)$ &
\begin{tabular}{|>{\centering}p{0.8cm}|>{\centering}p{0.8cm} |>{\centering}p{0.8cm}| >{\centering\arraybackslash} p{0.8cm}|}
    \hline
    $I_{n}$ & $0$     & $I_n$ & $X$\\
    \hline
    $0$   & $I_n$ & $I_n$ & $I_n$\\
    \hline
\end{tabular}\\
\hline
$\mathbf{I}$& $(2n, n,n,n,n)$ &
\begin{tabular}{|>{\centering}p{0.8cm}|>{\centering}p{0.8cm} |>{\centering}p{0.8cm}|  >{\centering\arraybackslash} p{0.8cm}|}
    \hline
    $I_{n}$ & $0$     & $I_n$ & $J_n(0)$\\
    \hline
    $0$   & $I_n$ & $I_n$ & $I_n$\\
    \hline
\end{tabular}\\
\hline
$\mathbf{I\!I}$& $(2n+1, n+1,n+1,n,n)$ &
\begin{tabular}{|>{\centering}p{0.8cm}|>{\centering}p{0.8cm} |>{\centering}p{0.8cm}| >{\centering\arraybackslash} p{0.8cm}|}
    \hline
    $I_{n+1}$ & $I_{n+1}$     & $I_n^\downarrow$ & $0$\\
    \hline
    $0$   & $I_n^\rightarrow$ & $I_n$ & $I_n$\\
    \hline
\end{tabular}\\
\hline
$\mathbf{I\!I\!I}$& $(2n+1, n,n,n,n+1)$ &
\begin{tabular}{|>{\centering}p{0.8cm}|>{\centering}p{0.8cm} |>{\centering}p{0.8cm}|  >{\centering\arraybackslash}p{0.8cm}|}
    \hline
    $I_{n+1}$ & $0$     & $I_n^\uparrow$ & $I_n^\downarrow$\\
    \hline
    $0$   & $I_n$ & $I_n$ & $I_n$\\
    \hline
\end{tabular}\\
\hline
$\mathbf{I\!I\!I}^*$& $(2n+1, n+1,n+1,n+1,n)$ &
\begin{tabular}{|>{\centering}p{0.8cm}|>{\centering}p{0.8cm} |>{\centering}p{0.8cm}| >{\centering\arraybackslash} p{0.8cm}|}
    \hline
    $I_{n}$ & $0$     & $I_n^\leftarrow$ & $I_n^\rightarrow$\\
    \hline
    $0$   & $I_{n+1}$ & $I_{n+1}$ & $I_{n+1}$\\
    \hline
\end{tabular}\\
\hline
$\mathbf{IV}$ & $(2n+2, n+1,n+1,n+1,n)$ &
\begin{tabular}{|>{\centering}p{0.8cm}|>{\centering}p{0.8cm} |>{\centering}p{0.8cm}| >{\centering\arraybackslash} p{0.8cm}|}
    \hline
    $I_{n+1}$ & 0     & $I_{n+1}$ & $I_n^\uparrow$\\
    \hline
    $0$   & $I_{n+1}$ & $I_{n+1}$ & $I_n^\downarrow$\\
    \hline
\end{tabular}\\
\hline
$\mathbf{IV}^*$ & $(2n+2, n+1,n+1,n+1,n+2)$ &
\begin{tabular}{|>{\centering}p{0.8cm}|>{\centering}p{0.8cm} |>{\centering}p{0.8cm}| >{\centering\arraybackslash} p{0.8cm}|}
    \hline
    $I_{n+1}$ & 0     & $I_{n+1}$ & $I_{n+1}^\leftarrow$\\
    \hline
    $0$   & $I_{n+1}$ & $I_{n+1}$ & $I_{n+1}^\rightarrow$\\
    \hline
\end{tabular}\\
\hline
$\mathbf{V}$ & $(2n+1, n,n,n,n)$ &
\begin{tabular}{|>{\centering}p{0.8cm}|>{\centering}p{0.8cm} |>{\centering}p{0.8cm}| >{\centering\arraybackslash} p{0.8cm}|}
    \hline
    $I_n$ & 0     & $J_n(0)$ & $I_n$\\
    \hline
    $0$   & $I_n$ & $I_n$ & $J_n(0)$\\
    \hline
    $0..0$ & $0..0$ & $10..0$ & $10..0$\\
    \hline
\end{tabular}\\
\hline
$\mathbf{V}^*$ & $(2n+1,n+1,n+1,n+1,n+1)$ &
\begin{tabular}{|>{\centering}p{0.8cm}|>{\centering}p{0.8cm} |>{\centering}p{0.8cm}|  >{\centering\arraybackslash} p{0.8cm}|}
    \hline
    $I_{n}^\leftarrow$ & $I_{n}^\leftarrow$     & $I_n^{\rightarrow}$ & $0$\\
    \hline
    $0$   & $I_n^\rightarrow$ & $I_{n}^\leftarrow$ & $I_{n}^\leftarrow$\\
    \hline
    $10..0$ & $10..0$ & $10..0$ & $10..0$\\
    \hline
\end{tabular}\\ 
\hline
\hline
\vspace{0.5em}
\end{tabular}
\caption{Indecomposable modules of the four subspace quiver, up to permutation of the subspaces. The following list is a direct result from the diagrams in \cite{Med+2004}.}
\label{table 4sub}
\end{table}

\begin{table}
    \centering
\begin{tabular}{c c c}
$\mathbf{M}$ & $\dim (M, M_0, M_1, M_2, M_3)$ & block matrix\\
\hline\hline
$\mathbf{N}_n$& $(2n, n,n,n,n)$ &
\begin{tabular}{{|>{\centering}p{0.8cm}|>{\centering}p{0.8cm} |>{\centering}p{0.8cm}| >{\centering\arraybackslash} p{0.8cm}|}}
    \hline
    $I_n$ & 0     & $I_n$ & $X$\\
    \hline
    $0$   & $I_n$ & $I_n$ & $I_n$\\
    \hline
\end{tabular}\\
\hline
$\mathbf{J}^{(1)}_n$ & $(2n, n,n,n,n)$ &
\begin{tabular}{{|>{\centering}p{0.8cm}|>{\centering}p{0.8cm} |>{\centering}p{0.8cm}| >{\centering\arraybackslash} p{0.8cm}|}}
    \hline
    $I_n$ & 0     & $I_n$ & $J_n(1)$\\
    \hline
    $0$   & $I_n$ & $I_n$ & $I_n$\\
    \hline
\end{tabular}\\
\hline
$\mathbf{J}^{(2)}_n$ & $(2n, n,n,n,n)$ &
\begin{tabular}{{|>{\centering}p{0.8cm}|>{\centering}p{0.8cm} |>{\centering}p{0.8cm}| >{\centering\arraybackslash} p{0.8cm}|}}
    \hline
    $I_n$ & 0     & $I_n$ & $J_n(0)$\\
    \hline
    $0$   & $I_n$ & $I_n$ & $I_n$\\
    \hline
\end{tabular}\\
\hline
$\mathbf{J}^{(3)}_n$ & $(2n, n,n,n,n)$ &
\begin{tabular}{{|>{\centering}p{0.8cm}|>{\centering}p{0.8cm} |>{\centering}p{0.8cm}| >{\centering\arraybackslash} p{0.8cm}|}}
    \hline
    $I_n$ & 0     & $J_n(0)$ & $I_n$\\
    \hline
    $0$   & $I_n$ & $I_n$ & $I_n$\\
    \hline
\end{tabular}\\
\hline
$\mathbf{C}_n$ & $(2n+1,n+1,n,n,n)$ &
\begin{tabular}{{|>{\centering}p{0.8cm}|>{\centering}p{0.8cm} |>{\centering}p{0.8cm}| >{\centering\arraybackslash} p{0.8cm}|}}
    \hline
    $I_{n+1}$ & 0     & $I_n^{\uparrow}$ & $I_n^{\downarrow}$\\
    \hline
    $0$   & $I_n$ & $I_n$ & $I_n$\\
    \hline
\end{tabular}\\
\hline
$\mathbf{T}_n$ & $(2n+1,n, n+1, n+1, n+1)$ &
\begin{tabular}{{|>{\centering}p{0.8cm}|>{\centering}p{0.8cm} |>{\centering}p{0.8cm}| >{\centering\arraybackslash} p{0.8cm}|}}
    \hline
    $I_n$ & 0     & $I_n^{\leftarrow}$ & $I_n^{\rightarrow}$\\
    \hline
    $0$   & $I_{n+1}$ & $I_{n+1}$ & $I_{n+1}$\\
    \hline
\end{tabular}\\
\hline
\hline
\vspace{0.5em}
\end{tabular}
\caption{Indecomposable modules corresponding to data of Hölder type}\label{table htype}
\end{table}

\begin{table}
    \centering
\begin{tabular}{c c c}
$\mathbf{M}$ & $\dim (M, M_0, M_1, M_2, M_3)$ & block matrix\\
\hline\hline
$\mathbf{Y}$ & $(2, 0, 1, 1, 1)$ &
\begin{tabular}{{|>{\centering}p{0.8cm}|>{\centering}p{0.8cm} |>{\centering}p{0.8cm}| >{\centering\arraybackslash} p{0.8cm}|}}
    \hline
    $0$ & $1$ & $0$     & $1$\\
    \hline
    $0$ & $0$   & $1$ & $1$\\
    \hline
\end{tabular}\\
\hline
$\mathbf{Z}$ & $(3, 1,1,1,1)$ &
\begin{tabular}{{|>{\centering}p{0.8cm}|>{\centering}p{0.8cm} |>{\centering}p{0.8cm}| >{\centering\arraybackslash} p{0.8cm}|}}
    \hline
    $1$ & $0$     & $0$ & $1$\\
    \hline
    $0$   & $1$ & $1$ & $0$\\
    \hline
    $0$ & $0$ & $1$ & $1$\\
    \hline
\end{tabular}\\
\hline
\hline
$\mathbf{L}$ & $(4, 1, 2, 2, 2)$ &
\begin{tabular}{{|>{\centering}p{0.8cm}|>{\centering}p{0.8cm} |>{\centering}p{0.8cm}| >{\centering\arraybackslash} p{0.8cm}|}}
    \hline
    $I_1^{\uparrow}$ & $I_2$ & 0     & $I_2$\\
    \hline
    $I_{1}^\downarrow$ & $0$   & $I_{2}$ & $I_{2}$\\
    \hline
\end{tabular}\\
\hline
$\mathbf{B}$ & $(5, 2,2,2,2)$ &
\begin{tabular}{{|>{\centering}p{0.8cm}|>{\centering}p{0.8cm} |>{\centering}p{0.8cm}| >{\centering\arraybackslash} p{0.8cm}|}}
    \hline
    $I_2$ & 0     & $J_2(0)$ & $I_2$\\
    \hline
    $0$   & $I_2$ & $I_2$ & $J_2(0)$\\
    \hline
    $00$ & $00$ & $10$ & $10$\\
    \hline
\end{tabular}\\
\hline
\hline
\vspace{0.5em}
\end{tabular}
\caption{Indecomposable modules corresponding to Young's convolution inequality and to Loomis-Whitney type inequalities}
\end{table}

\begin{table}
    \centering
\begin{tabular}{c c c}
$\mathbf{M}$ & $\dim (M, M_0, M_1, M_2, M_3)$ & block matrix\\
\hline\hline
$\mathbf{P}^{(1)}$ & $(1, 0, 0, 1, 1)$ &
\begin{tabular}{{|>{\centering}p{0.8cm}|>{\centering}p{0.8cm} |>{\centering}p{0.8cm}| >{\centering\arraybackslash} p{0.8cm}|}}
    \hline
    $0$ &  $0$  & $I_1$ & $I_1$\\
    \hline
\end{tabular}\\
\hline
$\mathbf{P}^{(2)}$ & $(1, 0, 1, 0, 1)$ &
\begin{tabular}{{|>{\centering}p{0.8cm}|>{\centering}p{0.8cm} |>{\centering}p{0.8cm}| >{\centering\arraybackslash} p{0.8cm}|}}
    \hline
    $0$ &  $I_1$  & $0$ & $I_1$\\
    \hline
\end{tabular}\\
\hline
$\mathbf{P}^{(3)}$ & $(1, 0, 1, 1, 0)$ &
\begin{tabular}{{|>{\centering}p{0.8cm}|>{\centering}p{0.8cm} |>{\centering}p{0.8cm}| >{\centering\arraybackslash} p{0.8cm}|}}
    \hline
    $0$ &  $I_1$  & $I_1$ & $0$\\
    \hline
\end{tabular}\\
\hline
$\mathbf{K}^{(1)}$ & $(2, 1,0,1,1)$ &
\begin{tabular}{{|>{\centering}p{0.8cm}|>{\centering}p{0.8cm} |>{\centering}p{0.8cm}| >{\centering\arraybackslash} p{0.8cm}|}}
    \hline
    $I_1$ & $0$ & $0$ & $I_1$\\
    \hline
    $0$ & $0$ & $I_1$ & $I_1$\\
    \hline
\end{tabular}\\
\hline
$\mathbf{K}^{(2)}$ & $(2, 1,1,0,1)$ &
\begin{tabular}{{|>{\centering}p{0.8cm}|>{\centering}p{0.8cm} |>{\centering}p{0.8cm}| >{\centering\arraybackslash} p{0.8cm}|}}
    \hline
    $I_1$ & $0$ & $0$ & $I_1$\\
    \hline
    $0$ & $I_1$ & $0$ & $I_1$\\
    \hline
\end{tabular}\\
\hline
$\mathbf{K}^{(3)}$ & $(2, 1,1,1,0)$ &
\begin{tabular}{{|>{\centering}p{0.8cm}|>{\centering}p{0.8cm} |>{\centering}p{0.8cm}| >{\centering\arraybackslash} p{0.8cm}|}}
    \hline
    $I_1$  & $0$ & $I_1$ & $0$\\
    \hline
    $0$  & $I_1$ & $I_1$ & $0$\\
    \hline
\end{tabular}\\
\hline
\hline
\vspace{0.5em}
\end{tabular}
\caption{Indecomposable modules corresponding to Hölder's inequality or Hölder's inequality combined with boundedness of a linear singular integral operator}
\label{table Holder}
\end{table}

\newpage
\bibliographystyle{abbrv}
\bibliography{ref}

\end{document}